\documentclass[11pt,a4paper,reqno]{amsart}
\usepackage[english]{babel}
\usepackage[T1]{fontenc}
\usepackage{verbatim}
\usepackage{palatino}
\usepackage{amsmath}
\usepackage{mathabx}
\usepackage{amssymb}
\usepackage{tcolorbox}
\usepackage{amsthm}
\usepackage{amsfonts}
\usepackage{graphicx}
\usepackage{tikz}
\usetikzlibrary{calc,angles,quotes,arrows.meta, decorations.pathreplacing}
\usepackage{esint}
\usepackage{color}
\usepackage[dvipsnames]{xcolor}
\usepackage{mathtools}
\usepackage{overpic}

\usepackage[colorlinks = true, citecolor = black]{hyperref}
\usepackage{cleveref}
\author{Benjamin Jaye and Rahul Sethi}
\title[Uncertainty Principle]{Quantitative Uniqueness and Rough Damping on $\mathbb T^2$}
\address{School of Mathematics\\ Georgia Institute of Technology \\ Atlanta, GA, USA} \email{bjaye3@gatech.edu}
\email{rahul.sethi@math.gatech.edu}

\date{\today}
\keywords{Fourier Transform, Uncertainty Principles, Control Theory}

\newcommand{\C}{\mathbb C}

\newcommand{\h}{\mathcal H}

\newcommand{\N}{\mathbb N}
\newcommand{\Z}{\mathbb Z}

\newcommand{\ep}{\varepsilon}
\newcommand{\Q}{\mathbb Q}
\newcommand{\R}{\mathbb R}
\newcommand{\D}{\mathbb D}
\newcommand{\T}{\mathbb T}

\renewcommand{\ge}{\geqslant}
\renewcommand{\le}{\leqslant}

\numberwithin{equation}{section}

\theoremstyle{plain}

\newtheorem{theorem}{Theorem}[section]     
\newtheorem{conjecture}[theorem]{Conjecture}
\newtheorem{lemma}[theorem]{Lemma}

\newtheorem{corollary}[theorem]{Corollary}
\newtheorem{proposition}[theorem]{Proposition}

\newtheorem*{"thm"}{"Theorem"}

\newtheorem*{lemma*}{Lemma}

\theoremstyle{definition}

\theoremstyle{remark}
\newtheorem{remark}[equation]{Remark}

\newcommand{\nref}[1]{(\hyperref[#1]{#1})}

\begin{document}
\begin{abstract}
    Motivated by a conjecture of Burq and G\'erard, we investigate quantitative uniqueness principles for functions on $\mathbb T^2$ whose Fourier spectra lie in fixed-width annuli. We obtain observability estimates uniform in the radius, under a mild Sobolev regularity condition, and the generalized geometric control condition (GGCC) on the damping function. This leads to exponential decay for the damped wave equation with rough damping. We also establish an analogous uncertainty principle for spectra near dilates of convex polygonal boundaries, where no Sobolev regularity of the damping is required beyond $L^\infty$.
\end{abstract}

\maketitle

\section{Introduction}

Let $\mathbb T^2:=\mathbb R^2/\mathbb Z^2$, and let $\Delta$ be the Laplace operator on $\mathbb T^2$. We consider the damped wave equation
\begin{equation}
    \label{eq:damped-wave-introduction}
    \partial_t^2u-\Delta u+\gamma(x)\partial_tu=0,
    \qquad
    (u,\partial_tu)|_{t=0}=(u_0,u_1)\in H^1(\mathbb T^2)\times L^2(\mathbb T^2),
\end{equation}
for non-negative $\gamma\in L^\infty(\mathbb T^2)$. The associated energy
$$E_u(t) := \frac12\int_{\mathbb T^2} \left( |\nabla u(x,t)|^2+|\partial_tu(x,t)|^2
\right)\,dx$$
satisfies
$$\frac{d}{dt}E_u(t)
= -\int_{\mathbb T^2}\gamma(x)|\partial_tu(x,t)|^2\,dx
\le 0,$$
and so the damping function explains the decay of the energy. A fundamental problem in control theory is to determine when the energy decays uniformly and exponentially: when do there exist constants $C,c>0$ such that
\begin{equation}
    \label{eq:exponential-stabilization-introduction}
    E_u(t)\le  Ce^{-ct}E_u(0),
\end{equation}
for all $t\ge 0$, for every solution of \eqref{eq:damped-wave-introduction}? This is called uniform stabilization.

The geodesics on $\mathbb T^2$ are
$$\mathcal G_{x,\nu,L} := \{x+t\nu:0\le t\le L\},$$
for $x\in\mathbb T^2$ and $\nu\in S^1$, with addition understood modulo $\mathbb Z^2$. When the damping function $\gamma$ is continuous, uniform stabilization is characterized by the classical geometric control condition (GCC): there exist $L,c_0>0$ such that
\begin{equation}
\label{eq:classical-GCC-intro}
    \inf_{x\in\mathbb T^2}\inf_{\nu\in S^1}
\int_0^L \gamma(x+t\nu)\,dt \ge c_0.
\end{equation}

Geometric control conditions have long played a role in control theory for hyperbolic equations, for instance see Bardos, Lebeau, Rauch \cite{b-l-r, zworski-book, rauch74}, and various uncertainty principles have played an essential role in control theory \cite{bourgain18,jaffard01,zworski-book,suzuki-inami,suzuki-damping}. See also \cite{folland97,havin12} for general background on uncertainty principles.

The situation is more subtle for rough damping functions, since elements of $L^\infty(\mathbb T^2)$ are defined only up to sets of measure zero, and formulations of geometric control in terms of individual geodesics require a refinement in this setting. To this end, Burq and G\'erard introduced a generalized geometric control condition formulated in terms of averages over shrinking tubular neighborhoods of geodesics \cite{BurqGerard2020}. Writing
$$\Gamma_{x,\nu,L,\delta} := \left\{y\in\mathbb T^2: \operatorname{dist}(y,\mathcal G_{x,\nu,L})<\delta \right\},$$
we say that $\gamma$ satisfies the \emph{generalized geometric control condition} (GGCC) if there exist $L,c_0>0$ such that
\begin{equation}
\label{eq:GGCC-intro}
\liminf_{\delta\to0} \inf_{x\in\mathbb T^2}\inf_{\nu\in S^1}
\frac{1}{|\Gamma_{x,\nu,L,\delta}|} \int_{\Gamma_{x,\nu,L,\delta}}\gamma(y)\,dy \ge c_0.
\end{equation}
In contrast with the preceding conditions, GGCC depends only on the $L^\infty$ equivalence class of $\gamma$.

Burq and G\'erard proved that the corresponding GGCC is necessary for uniform stabilization on every smooth compact Riemannian manifold without boundary and for every non-negative $L^\infty$ damping function \cite{BurqGerard2020}. In the same work, they focused on $\mathbb T^2$ and proved that GGCC is also sufficient when the damping is a finite positive linear combination of characteristic functions of polygons. This two-dimensional result was subsequently generalized by Rouveyrol to a sufficient geometric condition for polyhedral damping on higher-dimensional flat tori \cite{Rouveyrol2024}. Burq and G\'erard conjectured that the polygonal structure is unnecessary; on $\mathbb T^2$, their conjecture takes the following form.

\begin{conjecture}[Burq--G\'erard]
\label{conj:burq-gerard}
Let $\gamma\in L^\infty(\mathbb T^2)$ be non-negative. Then the damped wave equation \eqref{eq:damped-wave-introduction} is uniformly stabilized if and only if $\gamma$ satisfies the GGCC \eqref{eq:GGCC-intro}.
\end{conjecture}

Burq and Joly proved uniform stabilization for the damped wave equation on $\mathbb R^d$ under a geometric control condition \cite{burq16}, assuming that the damping function is uniformly continuous. A complementary Fourier-analytic approach to geometric control has emerged in recent years; see also \cite{egidi18} for related connections between Logvinenko--Sereda estimates and controllability of the heat equation. Green subsequently gave a different proof of the one-dimensional case using a Logvinenko--Sereda uncertainty principle \cite{green2019decay, logvinenko74, paneah61}. Green, Mitkovski, and the first author developed this point of view in higher dimensions \cite{green-jaye-mitkovski}. They established an uncertainty principle for functions with spectra in annuli. In particular, if $E\subset\mathbb R^d$ satisfies the $1$-GCC and
$$
\operatorname{supp}\widehat f
\subset \left\{\xi\in\mathbb R^d:
R\le |\xi|\le R+1\right\},
$$
then,
$$\|f\|_{L^2(\mathbb R^d)} \le C_{\delta,E}\, \|f\|_{L^2(E_\delta)},$$
for every $\delta>0$, where $E_\delta$ is the $\delta$-neighborhood of $E$ and the constant is independent of $R$. This estimate was applied to obtain energy decay for damped fractional wave equations in $\R^d$. 

A natural question left open is whether the neighborhood $E_\delta$ can be replaced with $E$. A positive answer would remove the uniform continuity assumption from the stabilization theorem of Burq and Joly. The radial case of this question was subsequently resolved by the authors using a Logvinenko-Sereda uncertainty principle for the Fourier--Bessel transform \cite{jaye-sethi, gj}. This observability result was also shown independently by Wei, Duan, and Xu \cite{WeiDuanXu2026}. Related connections between geometric control and uncertainty principles have recently also appeared in the study of Schrödinger observability \cite{GreenKleinhenz2026}, where the authors introduce the comb GCC condition.

In this paper, we further develop this perspective in the specific setting of $\T^2$.

Our first main result is a quantitative uniqueness result for functions with Fourier support in annuli whose width may shrink as the radius approaches infinity. In particular, for $0\le\alpha\le\frac12$, define
$$ A_{R,\alpha} := \left\{\xi\in\mathbb R^2:
R\le|\xi|\le R+R^{-\alpha} \right\}.$$

\begin{tcolorbox}[colback=SeaGreen!10, colframe=SeaGreen!80, boxrule=0.5mm, left=2mm, right=2mm, boxsep=1mm, arc=1mm]
\begin{theorem}
\label{thm:annular-introduction}
Let $0\le\alpha\le\frac12$, and set $s:=\frac12-\alpha.$ Suppose that $\gamma\in H^s(\mathbb T^2)\cap L^\infty(\mathbb T^2)$ is non-negative and satisfies the generalized geometric control condition (GGCC): there exist $L,c_0>0$ such that
$$
\liminf_{\delta\to0}
\inf_{x\in\mathbb T^2} \inf_{\nu \in S^1}
\frac{1}{|\Gamma_{x,\nu,L,\delta}|}
\int_{\Gamma_{x,\nu,L,\delta}}\gamma(y)\,dy
\ge c_0.
$$
Then there exists a constant $C>0$, independent of $R$, such that
\begin{equation}
\label{eq:annular-main-introduction}
\|f\|_{L^2(\mathbb T^2)}^2 \le C\int_{\mathbb T^2}|f(x)|^2\, \gamma(x)\,dx,
\end{equation}
whenever
$$\operatorname{supp}\widehat f
\subset A_{R,\alpha}\cap\mathbb Z^2.$$
\end{theorem}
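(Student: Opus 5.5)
We argue by contradiction, combining a compactness argument with a decomposition of the annular spectrum into directional pieces. Suppose the estimate fails. Then there are radii $R_n\to\infty$ and functions $f_n$ with $\|f_n\|_{L^2}=1$, $\operatorname{supp}\widehat{f_n}\subset A_{R_n,\alpha}\cap\mathbb Z^2$, and $\int|f_n|^2\gamma\to0$. Bounded $R$ are harmless. For fixed $R$ the space of such $f$ is finite dimensional, and each $f\neq 0$ in it is a trigonometric polynomial, hence real analytic, so it cannot vanish on the positive-measure set $\{\gamma>0\}$; by compactness the estimate holds for each fixed $R$. So only $R\to\infty$ matters.

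The first step is to describe the lattice points of $A_{R,\alpha}$. We split the annulus into angular sectors of aperture $\sim R^{-1/2}$. On each sector the points lie in a curved box of dimensions roughly $R^{-\alpha}\times R^{1/2}$, which is essentially a rectangle of width $\lesssim R^{-\alpha}+R^{-1/2}\cdot R^{1/2}\cdot R^{-1/2}$ normal to a direction $\nu$. Write $f=\sum_j f_j$ according to these sectors. Each $f_j$ then has spectrum in a thin slab of width $\lesssim R^{-\alpha}$ in the direction $\nu_j$ and length $R^{1/2}$ along the tangent. Consequently $|f_j|^2$ has Fourier support in a set of width $\lesssim R^{-\alpha}$ transverse to the tangent direction $\nu_j^\perp$ and length $\lesssim R^{1/2}$ along it. So $|f_j|^2$ is nearly invariant along the geodesic direction $\nu_j^\perp$ on scale $L$, up to frequency $R^{-\alpha}$ smoothing. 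The standard facts we need are that sectors with separated directions are almost orthogonal and that the Cordoba--Fefferman-type off-diagonal terms are controlled.

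The second step pairs $\gamma$ against $|f|^2=\sum_{j,k}f_j\overline{f_k}$. For the diagonal terms we apply GGCC. Averaging $\gamma$ along the direction $\nu_j^\perp$ over length $L$ dominates $c_0$ after mollification at scale $\delta\sim R^{-1/2}$, transversally to that direction. This is where a tube of width $\delta\to0$ appears, and it is why GGCC, rather than pointwise GCC, is the right hypothesis. Concretely, we would write $\int\gamma|f_j|^2=\int (\gamma*\mu_j)|f_j|^2+\text{error}$, where $\mu_j$ is normalized measure on a $\delta$-tube segment of direction $\nu_j^\perp$ and length $L$. The error is controlled by the Fourier transform of $\gamma$ at frequencies $\eta$ in the Fourier support of $|f_j|^2$ where $\widehat{\mu_j}(\eta)\neq1$. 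These are frequencies of size up to $R^{1/2}$ along the tangent. The Lipschitz bound for $1-\widehat{\mu_j}$ gives a factor $\min(1,L|\eta\cdot\nu_j^\perp|)$, and the width $R^{-\alpha}$ makes the band sum over such frequencies effectively $\sum|\widehat\gamma(\eta)|$ restricted to thin strips.

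The third step, which is the main obstacle, is the off-diagonal sum $\sum_{j\neq k}\int\gamma f_j\overline{f_k}$ together with the error above. These must be shown to be $o(1)\|f\|^2$, or at least small after discarding low frequencies of $\gamma$. Here the Sobolev hypothesis $\gamma\in H^{1/2-\alpha}$ is used. The frequency $\xi-\zeta$ of $f_j\overline{f_k}$ with $\xi,\zeta\in A_{R,\alpha}$ is a chord of the annulus. We count, for a given $\eta\in\mathbb Z^2$, the pairs of lattice points in $A_{R,\alpha}$ with difference $\eta$. Geometrically this is the intersection of two annuli of width $R^{-\alpha}$, which has area $\lesssim R^{-2\alpha}\cdot R/|\eta|$ for non-tangential intersection. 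A Schur test then bounds the bilinear form $\sum_\eta\widehat\gamma(\eta)\sum_{\xi-\zeta=\eta}\widehat f(\xi)\overline{\widehat f(\zeta)}$ by $\big(\sum_\eta|\widehat\gamma(\eta)|^2|\eta|^{1-2\alpha}\big)^{1/2}$ times $\|f\|^2$, modulo lattice counting issues. The counting, via Jarník-type bounds, needs care because lattice points need not be equidistributed in short arcs. High frequencies $|\eta|>M$ of $\gamma$ then contribute $\le \|P_{>M}\gamma\|_{H^{s}}\|f\|^2$, which is small. For the low-frequency part $P_{\le M}\gamma$, which is smooth, the function $|f_n|^2$ converges weakly to a defect measure invariant along directions. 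The GGCC passes to $P_{\le M}\gamma$ up to $O(\|P_{>M}\gamma\|_{L^1})$, and the classical GCC argument gives $\int P_{\le M}\gamma\,|f_n|^2\gtrsim c_0$. Together these contradict $\int|f_n|^2\gamma\to0$.
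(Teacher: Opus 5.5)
Your reduction to $R\to\infty$ is fine. The genuine gap is in your third step, and it goes beyond lattice-counting technicalities: the bilinear bound you rely on is false. The bound is $\bigl|\sum_\eta\widehat\gamma(\eta)\sum_{\xi-\zeta=\eta}\widehat f(\xi)\overline{\widehat f(\zeta)}\bigr|\lesssim\bigl(\sum_{\eta\neq0}|\widehat\gamma(\eta)|^2|\eta|^{1-2\alpha}\bigr)^{1/2}\|f\|_2^2$, uniformly in $R$, even with $|\eta|\le M$ discarded. By duality it asserts $\sum_{\eta\neq0}|\eta|^{-2s}|\widehat{|f|^2}(\eta)|^2\lesssim\|f\|_2^4$.

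Here is a counterexample. Take $R\in\mathbb{N}$, $K=\lfloor R^{(1-\alpha)/2}\rfloor$ and $f(x)=e^{2\pi iRx_1}\sum_{|k|\le K}e^{2\pi ikx_2}$. Since $\sqrt{R^2+k^2}-R\le k^2/(2R)\le R^{-\alpha}$, the spectrum of $f$ lies in $A_{R,\alpha}$. Then $\|f\|_2^2=2K+1$ and $\widehat{|f|^2}(0,b)=2K+1-|b|$. The range $K/2\le|b|\le K$ alone contributes $\gtrsim K^{3-2s}=K^{2+2\alpha}$, and for $\alpha=0$ the range $M<|b|\le K/2$ gives $\gtrsim K^2\log(K/M)$. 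At $\alpha=\frac12$ your bound would be a uniform $L^4$ estimate, which this $f$ violates.

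The count behind your bound is only right for transversal pairs. The number of lattice points in the overlap of $A_{R,\alpha}$ and $A_{R,\alpha}-\eta$ is governed by its length $\sim R^{1-\alpha}/|\eta|$, not its area, and this already exceeds $|\eta|^{1-2\alpha}$ once $|\eta|\ll\sqrt R$. For nearly tangent pairs (the same cap, or antipodal caps with $\eta\approx\pm 2R\nu$) it is as large as the whole cap.

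These near-diagonal interactions genuinely see the fine structure of $\gamma$. In the example, $|f|^2/\|f\|_2^2$ is a Fej\'er kernel in $x_2$, so $\int\gamma|f|^2/\|f\|_2^2$ is essentially an average of $\gamma$ over a horizontal tube of width $\sim 1/K$. That is a quantity of GGCC type, and $\|\gamma\|_{H^s}$ cannot control it. It must be handled by the GGCC through an uncertainty principle, and your Step 2 does not do this, for four reasons.
\begin{itemize}
\item Direction: $|f_j|^2$ is slowly varying along $\nu_j$, not along $\nu_j^\perp$. Along $\nu_j^\perp$ its spectrum reaches $\sim R^{1/2}$, so $\min(1,L|\eta\cdot\nu_j^\perp|)$ is of order one.
\item Width: a cap of aperture $R^{-1/2}$ has sagitta $\sim 1$, so the slab has width $O(1)$, not $R^{-\alpha}$.
\item The case $\alpha=0$: the annulus itself has width $1$, so $|f_j|^2$ varies at unit scale along $\nu_j$ and cannot be replaced by its averages over segments of the fixed length $L$.
\item Antipodal caps: they produce a cross term at frequency $\approx 2R\nu_j$ of the same size as the diagonal; take $\cos(2\pi Rx_1)\sum_{|k|\le K}e^{2\pi ikx_2}$.
\end{itemize}
For the same reason your defect-measure step fails at $\alpha=0$. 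A width-one spectral window only gives $O(h)$-quasimodes, $h=R^{-1}$, whose measures need not be invariant. For example, $|e^{2\pi iRx_1}(1+e^{2\pi ix_1})|^2=2+2\cos(2\pi x_1)$ is not invariant in the propagation direction $x_1$.

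The paper treats the near-diagonal part using no regularity of $\gamma$ at all. It groups the annulus into \emph{antipodal} sector pairs of arclength $\sim\sqrt R$, each contained in two parallel strips of width $O(1)$. It then proves a Logvinenko--Sereda inequality for two parallel strips whose constant is independent of their separation. The proof Taylor-approximates the two modulated components along lines in direction $\nu$. It then applies Nazarov's Tur\'an inequality to the resulting two-frequency exponential polynomials on the set where the mollified damping satisfies $\gamma_\varepsilon\ge\eta$; by the mollified form of the GGCC, that set has density $\theta$ on every segment. Next, the paper pigeonholes thin angular gaps of width $\varepsilon R^{-1/2}$ carrying at most $\varepsilon\|f\|_2^2$. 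This makes distinct retained pairs satisfy $|\sin\angle(\lambda,\mu)|\gtrsim\varepsilon R^{-1/2}$ and $|\mu-\lambda|\gtrsim\varepsilon\sqrt R$. Only these separated, transversal interactions are estimated with the Sobolev norm, via the count $\lesssim\varepsilon^{-1}R^{1/2-\alpha}$. This gives the bound $\varepsilon^{-s-1/2}\bigl(\sum_{|\sigma|\gtrsim\varepsilon\sqrt R}|\widehat\gamma(\sigma)|^2|\sigma|^{2s}\bigr)^{1/2}\|f\|_2^2\to0$. No compactness or defect measures are needed for large $R$.
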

\end{tcolorbox}

For $\alpha=0$, the hypothesis is $\gamma\in H^{1/2}(\T^2) \cap L^\infty(\T^2)$. As the annulus becomes thinner, the
required regularity decreases. At the endpoint $\alpha=\frac12$, the Sobolev hypothesis becomes automatic under the assumed boundedness.

A proof of Theorem \ref{thm:annular-introduction} in the case $\alpha=0$ for non-negative $\gamma\in L^\infty(\mathbb T^2)$ satisfying GGCC would establish the sufficiency direction of Conjecture \ref{conj:burq-gerard} without any additional regularity assumption. In Theorem \ref{thm:polygon-introduction}, we obtain the corresponding result for frequency sets near dilates of polygons.

As in \cite{malhi2020energy, green2019decay, green-jaye-mitkovski}, we can derive from Theorem \ref{thm:annular-introduction} a resolvent estimate from which semigroup theory \cite{gearhart78, pruss84} yields the following consequence. For more details, see the Appendix.

\begin{corollary}[Exponential stabilization]
Let $\gamma\in H^{1/2}(\mathbb T^2)\cap L^\infty(\mathbb T^2)$ be non-negative and satisfy the GGCC \eqref{eq:GGCC-intro}. Then the energy of every solution of
$$
\partial_t^2u-\Delta u+\gamma(x)\partial_tu=0,
$$
on $\T^2$ decays exponentially: there exist constants $C,c>0$ such that $$E_u(t)\le Ce^{-ct}E_u(0),$$
for $t \ge 0$.
\end{corollary}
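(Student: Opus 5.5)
The plan is to derive a high-frequency resolvent estimate for the stationary damped operator from Theorem \ref{thm:annular-introduction} with $\alpha=0$, which is exactly the case $\gamma\in H^{1/2}(\mathbb T^2)\cap L^\infty(\mathbb T^2)$. Exponential decay then follows from the Gearhart--Pr\"uss theorem \cite{gearhart78, pruss84}.

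\textbf{Functional setting.} Write the equation as $\partial_t U=\mathcal A U$ with $U=(u,\partial_t u)$ and
$$\mathcal A=\begin{pmatrix}0&I\\ \Delta&-\gamma\end{pmatrix}.$$
We work on the energy space $\mathcal H=\dot H^1(\mathbb T^2)\times L^2(\mathbb T^2)$, with constants quotiented out, since they carry no energy. The operator $\mathcal A$ generates a contraction semigroup there. By Gearhart--Pr\"uss, uniform exponential decay is equivalent to two facts:
\begin{enumerate}
\item $i\mathbb R\subset\rho(\mathcal A)$;
\item $\sup_{\lambda\in\mathbb R}\|(\mathcal A-i\lambda)^{-1}\|_{\mathcal H\to\mathcal H}<\infty$.
\end{enumerate}
By the standard reduction (as in \cite{green2019decay, green-jaye-mitkovski}), solving $(\mathcal A-i\lambda)U=F$ reduces to the scalar operator
$$P(\lambda)=-\Delta-\lambda^2+i\lambda\gamma .$$
Concretely, it suffices to prove that for $|\lambda|\ge\lambda_0$,
$$\|u\|_{L^2}\le \frac{C}{|\lambda|}\,\|P(\lambda)u\|_{L^2}.$$
Bounded $\lambda$ is handled by compactness once there are no imaginary eigenvalues.

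\textbf{Key estimate.} Let $f=P(\lambda)u$ with $\lambda>0$ large.

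First, taking the imaginary part of $\langle P(\lambda)u,u\rangle$ gives
$$\int\gamma|u|^2\le \frac{\|f\|\,\|u\|}{\lambda}.$$

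Next, split $u=u_{\rm near}+u_{\rm far}$ by Fourier projection, where $u_{\rm near}$ keeps the frequencies with $\lambda-\tfrac12\le|\xi|\le\lambda+\tfrac12$.

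For the far part, the symbol satisfies $\bigl||\xi|^2-\lambda^2\bigr|\ge c\lambda$. Since the projection commutes with $-\Delta-\lambda^2$,
$$\|u_{\rm far}\|\le \frac{1}{c\lambda}\bigl(\|f\|+\lambda\|\gamma u\|\bigr).$$
Combining this with $\|\gamma u\|^2\le\|\gamma\|_\infty\int\gamma|u|^2$ yields
$$\|u_{\rm far}\|\lesssim \frac{\|f\|}{\lambda}+\Bigl(\int\gamma|u|^2\Bigr)^{1/2}.$$

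For the near part, its spectrum lies in $A_{R,0}\cap\mathbb Z^2$ with $R=\lambda-\tfrac12$. Theorem \ref{thm:annular-introduction} applies with a constant independent of $R$, so
$$\|u_{\rm near}\|^2\le C\int\gamma|u_{\rm near}|^2\le 2C\Bigl(\int\gamma|u|^2+\|\gamma\|_\infty\|u_{\rm far}\|^2\Bigr).$$

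Adding the two pieces gives
$$\|u\|^2\lesssim \frac{\|f\|^2}{\lambda^2}+\frac{\|f\|\,\|u\|}{\lambda},$$
and hence $\|u\|\lesssim\|f\|/\lambda$ after absorbing via Young's inequality. The case $\lambda<0$ is symmetric.

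\textbf{Absence of imaginary eigenvalues.} Suppose $\mathcal A U=i\lambda U$ with $\lambda\neq0$. The energy identity gives $\int\gamma|u|^2=0$, and then $-\Delta u=\lambda^2u$. So $\widehat u$ is supported on the circle $|\xi|=|\lambda|$, which lies inside $A_{|\lambda|,0}$. Theorem \ref{thm:annular-introduction} then forces $u=0$. The case $\lambda=0$ is trivial on the quotient space.

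Since $\mathcal A$ has compact resolvent, the resolvent is bounded on compact subsets of $i\mathbb R$. Together with the high-frequency bound, this verifies the Gearhart--Pr\"uss hypotheses.

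\textbf{Expected difficulty.} The main delicate step is the bookkeeping in passing from the scalar estimate for $P(\lambda)$ to the bound on $(\mathcal A-i\lambda)^{-1}$ in the energy norm, including the treatment of constants. This is routine and follows the Appendix-style arguments of \cite{green2019decay, green-jaye-mitkovski}. The genuinely hard input is the uniform-in-$R$ observability, which is supplied entirely by Theorem \ref{thm:annular-introduction}.
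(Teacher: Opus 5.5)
Your argument is correct, and its analytic core matches the paper's: project onto the unit-width annulus about the resonant radius, apply Theorem \ref{thm:annular-introduction} with $\alpha=0$ to that piece, use the lower bound on the symbol elsewhere, and control the damping term through the dissipation identity. The difference is in how the generator is reduced to a scalar problem.

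You pass to the second-order operator $P(\lambda)$, prove $\|P(\lambda)^{-1}\|_{L^2\to L^2}\lesssim|\lambda|^{-1}$, and then appeal to the known equivalence with a uniform bound on $(\mathcal A-i\lambda)^{-1}$ in the energy norm. The paper instead factors at first order. It writes $G=(\mathcal A_\gamma-2\pi i\tau)U=(f,g)$, $r=v+2\pi i|D|u$ and $q=v-2\pi i|D|u$, and uses the identities $2\pi(|D|-\tau)r=2\pi|D|f-i(g+\gamma v)$ and $2\pi(|D|+\tau)q=2\pi|D|f+i(g+\gamma v)$. The non-resonant $q$ is bounded trivially. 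The resonant $r$ is bounded by the first-order estimate $\|w\|^2\lesssim\|(|D|-\tau)w\|^2+\int\gamma|w|^2$ of Proposition \ref{prop:scalar-resolvent}. Finally, $\int\gamma|v|^2=-\operatorname{Re}\langle G,U\rangle_{\mathcal H}$ closes the loop. This gives the energy-space bound directly, so the step you call ``routine bookkeeping'' never arises.

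Your route is the classical formulation, but that deferred step is not quite a one-liner. From $P(\lambda)u=-(f_2+(\gamma+i\lambda)f_1)$, the term $i\lambda P(\lambda)^{-1}f_1$ with $f_1\in H^1$ cannot be bounded naively: it loses a factor $\lambda$ at low frequencies of $f_1$. It does go through with only $\gamma\in L^\infty$, as follows.
\begin{itemize}
\item Write $\lambda^2P(\lambda)^{-1}f_1=P(\lambda)^{-1}(-\Delta f_1+i\lambda\gamma f_1)-f_1$.
\item Use $\|P(\lambda)^{-1}\|_{L^2\to H^1}+\|P(\lambda)^{-1}\|_{H^{-1}\to L^2}\lesssim1$ and $\|P(\lambda)^{-1}\|_{H^{-1}\to H^1}\lesssim|\lambda|$.
\item These bounds follow from your $L^2$ estimate via $\operatorname{Re}\langle P(\lambda)u,u\rangle=\|\nabla u\|^2-\lambda^2\|u\|^2$ and duality, using $P(\lambda)^*=P(-\lambda)$.
\end{itemize}

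One cosmetic point: with the paper's convention $e^{2\pi i k\cdot x}$, $-\Delta$ has symbol $4\pi^2|k|^2$. So your near set should be $\bigl||k|-\lambda/2\pi\bigr|\le\frac12$ rather than $\bigl||\xi|-\lambda\bigr|\le\frac12$; nothing else changes.
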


This gives a sufficient condition for uniform stabilization for a class of rough damping functions that is not covered by existing results. To illustrate the roughness permitted by this result, in Section \ref{section:rough-damping-example} we construct a non-negative function $\gamma\in H^{1/2}(\mathbb T^2)\cap L^\infty(\mathbb T^2)$ satisfying the stronger integral GCC for which $\{\gamma>0\}$ has positive measure but is nowhere dense. In particular, $\gamma$ is not bounded below by a positive constant on any nonempty open subset of $\mathbb T^2$, so the resulting stabilization cannot be reduced to observability from an open damping region.

Our second main result illustrates more explicitly how the geometry of the Fourier support determines the directions in which geometric control is required: it is a quantitative uniqueness result for functions with Fourier support near polygonal boundaries. For annular spectra, all directions occur, and accordingly Theorem \ref{thm:annular-introduction} assumes the full GGCC. On the other hand, only finitely many directions are relevant for polygonal frequency sets: those normal to its sides. For a set of directions $\Theta\subset S^1$, we say that $\gamma$ satisfies GGCC in the directions $\Theta$ if \eqref{eq:GGCC-intro} holds with the infimum in $\nu$ restricted to $\Theta$. Let $P\subset\mathbb R^2$ be a convex polygon, let $\Theta$ be the set of unit normals to its sides, and define
$$\Lambda_R(P) := \left\{\xi\in\mathbb Z^2: \operatorname{dist}(\xi,R\partial P)\le 1 \right\}.$$

\begin{tcolorbox}[colback=SeaGreen!10, colframe=SeaGreen!80, boxrule=0.5mm, left=2mm, right=2mm, boxsep=1mm, arc=1mm.]
\begin{theorem}
\label{thm:polygon-introduction}
Let $P\subset\mathbb R^2$ be a convex polygon, and let
$\gamma\in L^\infty(\mathbb T^2)$ be non-negative. Suppose that $\gamma$ satisfies GGCC in the directions $\Theta$: there exist $L,c_0>0$ such that
$$
\liminf_{\delta\to0}
\inf_{x\in\mathbb T^2} \inf_{\nu \in \Theta}
\frac{1}{|\Gamma_{x,\nu,L,\delta}|}
\int_{\Gamma_{x,\nu,L,\delta}}\gamma(y)\,dy
\ge c_0.
$$

Then there exists a constant $C>0$, depending on $P$ and $\gamma$ but independent of $R$, such that 
$$\|f\|_{L^2(\mathbb T^2)}^2 \le C\int_{\mathbb T^2}|f(x)|^2\gamma(x)\,dx$$
for every $R\geq 1$ and every $f\in L^2(\mathbb T^2)$ satisfying $$\operatorname{supp}\widehat f\subset\Lambda_R(P).$$
\end{theorem}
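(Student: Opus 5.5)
Since Theorem \ref{thm:polygon-introduction} concerns the frequency set $\Lambda_R(P)$, I would first split it into pieces attached to the sides of $P$ and one piece attached to the vertices. Let the sides of $RP$ be $S_1,\dots,S_m$, with unit normals $\nu_1,\dots,\nu_m\in\Theta$. Fix a large parameter $M$, independent of $R$. Let $V_R$ be the set of lattice points of $\Lambda_R(P)$ within distance $M$ of a vertex of $RP$. The set $V_R$ has at most $CM^2$ points, uniformly in $R$. Let $\Lambda^{(j)}_R$ be the remaining points near $S_j$; these lie in a strip of width $2$ orthogonal to $\nu_j$.

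\textbf{Step 1 (one direction).} For $g$ with $\operatorname{supp}\widehat g\subset\{\xi\in\mathbb Z^2:|\xi\cdot\nu_j-c|\le 1\}$, write $g(x)=e^{2\pi i c\,x\cdot\nu_j}h(x)$. Then $\widehat h$ is supported where $|\xi\cdot\nu_j - c|\le 1$, so $h$ is "slowly varying" along $\nu_j$ at unit scale, uniformly in $c$.
\begin{itemize}
\item If $\nu_j$ is a rational direction, restricting to lines in direction $\nu_j$ gives a one-dimensional Logvinenko--Sereda/Tur\'an-type inequality along each closed geodesic. This bounds $\int|g|^2$ on the geodesic by the integral of $|g|^2$ against $\gamma$ averaged over a segment of length $L$.
\item To handle the fact that $\gamma$ is only in $L^\infty$, I would average over $\delta$-tubes. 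Since $\widehat h$ has bounded extent along $\nu_j$ but arbitrary extent across it, one cannot smooth in the transverse direction. Instead, I would prove the inequality for each line and integrate over the transverse variable.
\item The GGCC should then be used in the following form: for a.e.\ line $\ell$ in direction $\nu_j$, $\int_{\ell\cap I}\gamma\ge c_0$ for every segment $I$ of length $L$. I would derive this by Lebesgue differentiation in the transverse variable applied to the tube averages. This is clean because the tubes $\Gamma_{x,\nu_j,L,\delta}$ are essentially products of a transverse interval and a longitudinal segment.
\item Irrational directions cannot occur as normals relevant to lattice strips in a problematic way; in any case, working on $\mathbb R^2$-periodic lines of length $L$ suffices.
\end{itemize}
Step 1 yields $\|g\|_2^2\le C\int|g|^2\gamma$ for each $g$ with spectrum in $\Lambda_R^{(j)}$. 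The bound uses a Remez/Nazarov-type inequality for functions of exponential type $2\pi$ on a segment. Its constant depends only on $\gamma$'s lower bound $c_0$ and $\|\gamma\|_\infty$, through the measure of $\{\gamma>c_0/2L\}$ on segments.

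\textbf{Step 2 (gluing).} For $f=\sum_j f_j+f_V$, I would show the pieces are almost orthogonal in $L^2(\gamma)$, i.e.\ $|\langle f_j\gamma,f_k\rangle|\le \epsilon(M)\|f_j\|\|f_k\|$ for $j\ne k$. Since $\Lambda^{(j)}_R$ and $\Lambda^{(k)}_R$ are separated by distance $\gtrsim M$ away from the vertices, this is where the main obstacle lies. The reason is that $\gamma$ is rough, so $\widehat\gamma$ does not decay and Fourier separation alone does not help.

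My first attempt would be a microlocal/wave-packet argument. Each $f_j$ is concentrated (in the sense of being a modulated function smooth along $\nu_j$) and $f_k$ along $\nu_k\neq\nu_j$. After demodulating, $\overline{h_j}h_k$ is a product of functions smooth in two transverse directions. Averaging $\gamma$ against it over unit boxes, I would then use that the relative frequency $\xi_j-\xi_k$ has magnitude $\gtrsim M$. The hope is to gain decay from oscillation in the direction where both $h$'s are smooth, i.e.\ to integrate by parts in a direction in which only the phase varies. If this fails, the fallback is a compactness argument. Suppose $f^{(n)}$ are normalized with $\int|f^{(n)}|^2\gamma\to0$. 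Take weak limits after demodulating each piece. The pieces decouple in the limit as $R\to\infty$, since limit objects with different directions generate mutually singular defect measures, in the spirit of Burq--G\'erard. This reduces matters to Step 1 for each piece.

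\textbf{Step 3 (vertices and small $R$).} The vertex piece $f_V$ lies in a finite-dimensional space of uniformly bounded dimension, modulated. A unique continuation argument (nonzero trigonometric polynomials vanish only on null sets, while $\gamma\not\equiv0$) handles finite-dimensional spaces. Uniformity over modulation requires compactness of the translated frequency configurations, of which there are finitely many patterns up to translation. Similarly, $R\le R_0$ is a finite-dimensional case. Combining these with Step 2 and absorbing the $\epsilon(M)$ errors gives the theorem.
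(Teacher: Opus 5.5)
Your Step 1 (a single-strip estimate from an a.e.-line GCC and a Tur\'an/Remez inequality) is sound. The gluing in Steps 2 and 3 has two genuine gaps.

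\textbf{Parallel sides.} Step 2 asserts $|\int\gamma f_j\overline{f_k}|\le\epsilon(M)\|f_j\|_2\|f_k\|_2$ for all $j\neq k$. For two \emph{parallel} sides, which every centrally symmetric $P$ has, this is false when $\gamma$ is merely bounded. A fixed difference $\zeta-\xi$ is then realized by up to $\sim R$ pairs, so separation buys nothing. For a concrete example, take $P=[-1,1]^2$, $B_k=[2^{-k-1},2^{-k})$ and $n_k=2^{2^k}$. Set $\gamma(x)=1+\cos(2\pi n_kx_1)$ for $x_2\in B_k$ and $\gamma=1$ elsewhere. Its averages along horizontal lines equal $1$ and along vertical lines are at least $\frac12$, so GGCC holds in the directions $e_1,e_2$. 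Now let $2R=n_k$, $u=e^{2\pi iRx_1}h(x_2)$ and $v=e^{-2\pi iRx_1}h(x_2)$, with $h$ a trigonometric polynomial of degree $\le R/2$ concentrated on $B_k$. Then $\int\gamma u\overline{v}=\frac12\int_{B_k}|h|^2\approx\frac12\|u\|_2\|v\|_2$ for every $k$, so the cross term does not become small as $R\to\infty$. Your defect-measure fallback does not help here, since both pieces live in the same direction.

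What is needed is an observability estimate for spectra in two parallel strips whose constant is independent of their separation. This is the paper's Theorem~\ref{two-strip-theorem}: Taylor-expand each demodulated component along segments in direction $\nu$, then apply Nazarov's Tur\'an lemma to the resulting two-frequency exponential polynomial. It is applied to each class of parallel sides as a unit.

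Between non-parallel sides you have misplaced the difficulty. Two non-parallel strips of bounded width meet in a set of bounded diameter, so each difference occurs $O_P(1)$ times. Cauchy--Schwarz then bounds the cross term by $\lesssim_P(\sum_{|k|\ge c_PM-2}|\widehat\gamma(k)|^2)^{1/2}\|f_j\|_2\|f_k\|_2$, which is small simply because $\widehat\gamma\in\ell^2$. No integration by parts against the rough $\gamma$ is needed, and it would not work anyway.

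\textbf{Vertices.} Step 3 does not glue either. The vertex piece $f_V$ is adjacent in frequency to the side pieces on the two sides through that vertex. Differences of size $O(1)$ occur, and $\widehat\gamma$ is not small there, so $f_V$ and those $f_j$ are not almost orthogonal in $L^2(\gamma)$. A finite-dimensional unique-continuation bound for $f_V$ alone therefore says nothing about $f_V+\sum_jf_j$.

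The paper argues by contradiction instead. The side estimate forces a normalized counterexample sequence to keep a definite amount of Fourier mass in a bounded corner region. Modulating by a lattice point near $R_nv$ and taking a weak limit of the \emph{whole} sequence gives a nonzero $g$ that vanishes on $\{\gamma>0\}$ and has spectrum in a translate of the tangent cone at $v$. That spectral set is infinite, because the adjacent side frequencies survive in the limit. One therefore needs the genuinely infinite-dimensional uniqueness result of Proposition~\ref{convex-cone}: an $L^2$ function with spectrum in a cone of opening $<\pi$ cannot vanish on a set of positive measure. The paper proves it by mapping the cone into the first quadrant with an integer matrix whose rows lie in the dual cone, and then invoking boundary uniqueness for $H^2(\mathbb{D}^2)$. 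This ingredient is missing from your proposal.
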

\end{tcolorbox}

Observe that no Sobolev regularity of the damping is needed beyond $\gamma\in L^\infty$ in Theorem \ref{thm:polygon-introduction}.

We conclude the introduction with a brief sketch of the basic ideas behind the proofs of these theorems.  A recurring tool in our work is a Logvinenko--Sereda inequality for functions whose Fourier support lies in two parallel strips. The important feature is that the observability constant is independent of the separation between the strips. The analogous result for a single strip was proved in \cite{green-jaye-mitkovski}, but it is crucial for us that we can handle antipodal pairs of strips simultaneously.  The proof of this estimate is a modification of ideas originating in Kovrizhkin \cite{k1}, where the classical Logvinenko-Sereda theorem is extended to Fourier spectra supported in a finite union of intervals.

In the case of Theorem \ref{thm:annular-introduction}, we decompose the annulus into antipodal pairs of strips of constant width and length $\sqrt{R}$. Using the pigeonhole principle, we may select strip-pairs that are separated by a small constant multiple of $\sqrt{R}$ with only a small norm loss. The two-strip inequality applies uniformly to each retained pair. The enforced separation enables  lattice-point counting to effectively limit the number of frequency pairs that can contribute to a fixed frequency difference, while the $H^{1/2-\alpha}$ regularity of $\gamma$ controls the resulting sum.

For polygonal spectra, we first remove fixed neighborhoods of the vertices and group the remaining side pieces according to their directions. The two-strip inequality controls each group, while the $L^2$ Fourier tail of $\gamma$ ensures that the interactions between non-parallel sides becomes small. Thus, any failure of observability must leave a noticeable amount of Fourier mass near some vertex. Finally, a compactness argument resting on a uniqueness result for cone-supported Fourier series rules this out.

\section{A Logvinenko-Sereda Inequality for Parallel Strips}
\label{section:strips}

We now prove the two-strip uncertainty principle described in the
Introduction. We use throughout the notation
$\mathcal G_{x,\nu,L}$ and $\Gamma_{x,\nu,L,\delta}$ introduced above for the GGCC condition.

\begin{tcolorbox}[colback=SeaGreen!10, colframe=SeaGreen!80, boxrule=0.5mm, left=2mm, right=2mm, boxsep=1mm, arc=1mm.]
\begin{theorem}[Two-strip theorem]
\label{two-strip-theorem}
Fix a strip width $W<\infty$. Let $\nu\in S^1$, and suppose that $\Sigma_1,\Sigma_2\subset \mathbb R^2$ are two strips parallel to $\nu^\perp$, each of width at most $W$. Assume that $\gamma\in L^\infty(\mathbb T^2)$ is non-negative and satisfies the following generalized geometric control condition (GGCC) in the
direction $\nu$: there exist $L,c_0>0$ such that
$$
\liminf_{\delta\to0}
\inf_{x\in\mathbb T^2}
\frac{1}{|\Gamma_{x,\nu,L,\delta}|}
\int_{\Gamma_{x,\nu,L,\delta}}\gamma(y)\,dy
\ge c_0.
$$
Then there exists a constant $C_{\mathrm{str}} < \infty$ such that every $h \in L^2(\T^2)$ satisfying $\operatorname{supp}\widehat h\subset \Sigma_1\cup \Sigma_2$ obeys
\begin{equation}
    \label{eq:two-strip-theorem}
\|h\|_{L^2(\mathbb T^2)}^2
   \le C_{\mathrm{str}}\int_{\mathbb T^2}|h(x)|^2\, \gamma(x)\,dx.
\end{equation}
In particular, the constant $C_{\mathrm{str}}$ is independent of the separation between $\Sigma_1$ and $\Sigma_2$.
\end{theorem}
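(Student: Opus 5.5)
Throughout, write $\xi=a\nu+b\nu^\perp$ for frequencies. The plan is to reduce the problem to a one-dimensional Logvinenko--Sereda (Kovrizhkin-type) estimate along lines in direction $\nu$, and then integrate over the transverse variable.

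\textbf{Reduction to lines.} Each strip $\Sigma_j$ parallel to $\nu^\perp$ has the form $\{\xi: \xi\cdot\nu\in I_j\}$ with $|I_j|\le W$. Hence for fixed $y\in\mathbb T^2$ the restriction $t\mapsto h(y+t\nu)$ has spectrum, in the sense of distributions on $\mathbb R$, contained in $I_1\cup I_2$. If $\nu$ is irrational the restriction is almost periodic rather than periodic, so we work with averages over long segments; the natural length scale is $L$. The one-dimensional statement we need is the following: there is $C=C(W,L,c)$ such that for every $g$ on $\mathbb R$ with $\operatorname{supp}\widehat g\subset I_1\cup I_2$ and every set $E\subset\mathbb R$ with $|E\cap J|\ge c|J|$ for all intervals $J$ of length $L$, we have $\int|g|^2\le C\int_E|g|^2$, uniformly in the gap between $I_1$ and $I_2$. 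This is Kovrizhkin's theorem for two intervals, whose constant depends only on the number of intervals, their lengths, and the density parameters. We need a weighted version with $\gamma$ in place of $\mathbf 1_E$. To get it, apply Kovrizhkin to the set $\{\gamma>c_0/2\}$ restricted to lines: a weight with average at least $c_0$ on segments and $\|\gamma\|_\infty<\infty$ has $|\{\gamma>c_0/2\}\cap J|\ge c_0|J|/(2\|\gamma\|_\infty)$ on those segments.

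\textbf{Passing from GGCC to a.e. lines.} GGCC only controls averages over tubes $\Gamma_{x,\nu,L,\delta}$, not over individual lines. To bridge this gap, mollify in the transverse direction. Replace $h$ by its restriction to the $\delta$-tube: let $\phi_\delta$ be an approximate identity in the $\nu^\perp$-variable and set $\gamma_\delta=\gamma*\phi_\delta$ (convolution transverse only). GGCC gives, for small $\delta$, that the line averages of $\gamma_\delta$ over length $L$ are at least $c_0/2$ for every line. This gives the 1D estimate on every line for $\gamma_\delta$, and integrating over the transverse parameter (Fubini on $\mathbb T^2$ with lines of length $L$ covering it, using a partition of a large torus cover when $\nu$ is irrational) yields $\|h\|^2\le C\int|h|^2\gamma_\delta$.

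\textbf{Removing the mollification.} We must show $\int|h|^2\gamma_\delta\approx\int|h|^2\gamma$, and this is the main obstacle. The key observation is that $|h|^2$ has spectrum in $(\Sigma_1-\Sigma_1)\cup(\Sigma_2-\Sigma_2)\cup\pm(\Sigma_1-\Sigma_2)$. The first two sets lie in the strip $|\xi\cdot\nu|\le W$, but the transverse frequency $\xi\cdot\nu^\perp$ is unbounded, so transverse smoothing does not commute trivially with this spectrum. I would therefore use instead the uncertainty principle itself, in the form of Kovrizhkin's local version on intervals: on each transverse line (direction $\nu^\perp$) nothing is assumed. So the alternative I would try first is to decompose $h=\sum_k h_k$ with $\widehat{h_k}$ supported where $\xi\cdot\nu^\perp\in[k,k+1)$. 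Then I would show that the 1D estimate along direction $\nu$ is uniform, apply it with the tube averages at scale $\delta$, and choose $\delta$ small but independent of $h$, using that the line estimate is stable under replacing $\mathbf 1_E$ by any $E'$ with $|E\triangle E'|$ small on each segment. Making this stability uniform without regularity of $\gamma$ is where I expect the real difficulty, and I would attack it via Kovrizhkin's bound being polynomial in $1/c$, so that only a fixed-fraction density is needed on a.e. segment. That fixed fraction follows from the Lebesgue differentiation of the tube averages.
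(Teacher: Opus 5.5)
Your architecture matches the paper's: a two-interval Logvinenko--Sereda estimate along lines in direction $\nu$, GGCC turned into a uniform line condition by mollifying $\gamma$, then integration over $\mathbb T^2$. But the proposal does not close, and the step you call the main obstacle is not one. For \emph{fixed} $h$, $\int|h|^2\gamma_\delta=\int\gamma\,(|h|^2*\check\phi_\delta)$ with $\check\phi_\delta(z)=\phi_\delta(-z)$. Moreover $|h|^2*\check\phi_\delta\to|h|^2$ in $L^1(\mathbb T^2)$ by continuity of translation in $L^1$, and this holds equally for your one-dimensional transverse mollifier. Hence $\int|h|^2\gamma_\delta\to\int|h|^2\gamma$, since $\gamma\in L^\infty$. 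Your constant does not depend on $\delta$, so you simply let $\delta\to0$ in $\|h\|_2^2\le C\int|h|^2\gamma_\delta$. You need nothing uniform in $h$, no $h$-independent choice of $\delta$, and no information on the spectrum of $|h|^2$; this is exactly how the paper removes the mollification in the first lines of its proof. The decomposition $h=\sum_k h_k$ and the stability-under-$E\triangle E'$ plan are therefore superfluous, and as written they are not an argument. Your closing remark is in fact sound and could replace mollification entirely: GGCC plus one-dimensional Lebesgue differentiation in the transverse variable gives $\int_{t_0}^{t_0+L}\gamma(y+t\nu)\,dt\gtrsim c_0L$ for a.e.\ line and every $t_0$.

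The second gap is transferring Kovrizhkin's theorem, which is a statement about $g\in L^p(\mathbb R)$, to the torus. The restriction $t\mapsto h(y+t\nu)$ is (almost) periodic, not in $L^2(\mathbb R)$. You also cannot work segment by segment: no bound $\int_J|g|^2\le C\int_{E\cap J}|g|^2$ holds uniformly for spectrum in an interval of length $W$. For example, $g(t)=\sin^k(\epsilon t)$ with $k\epsilon\lesssim W$ fixed behaves like $(\epsilon t)^k$ on $J=[0,L]$ as $k\to\infty$, and with $E\cap J=[0,L/2]$ the ratio is of order $2^{2k}$. A ``partition of a large torus cover'' does not address this. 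What works is to window and average. Take $\widehat\chi$ supported in $[-1,1]$; then $G_y(t):=h(y+t\nu)\chi(t)$ has spectrum in $(I_1+[-1,1])\cup(I_2+[-1,1])$. Apply Kovrizhkin to $G_y$ with $E_y=\{t\in\mathbb R:\gamma_\delta(y+t\nu)\ge\eta\}$ and integrate in $y$ using $\int_{\mathbb T^2}\int_{\mathbb R}F(y+t\nu)|\chi(t)|^2\,dt\,dy=\|\chi\|_2^2\int_{\mathbb T^2}F$. This gives $\|h\|_2^2\le C\eta^{-1}\int|h|^2\gamma_\delta$.

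The paper bypasses the black box instead. It Taylor-expands the modulated pieces $e^{-2\pi i a_jt}h_j(x+t\nu)$ to degree $d$. It bounds the remainders only after averaging over all starting points $x\in\mathbb T^2$, using the Bernstein bound $\|D_j^kh_j\|_2\le(2\pi W)^k\|h_j\|_2$. It then applies Nazarov's Tur\'an lemma to the two-frequency exponential polynomial on each $[0,L]$ and absorbs the remainder for large $d$. Averaging over $x$ replaces both the transference step and Kovrizhkin's good/bad-interval bookkeeping.
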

\end{tcolorbox}

The proof of Theorem \ref{two-strip-theorem} is based on a high-degree Taylor approximation argument, inspired by Kovrizhkin \cite{k1}, and uses Nazarov's Tur\'an inequality \cite{nazarov} as stated in Lemma 3 of \cite{k1}:

\begin{lemma}[Nazarov's Tur\'an Inequality]
\label{nazarov-Tur\'an-kovrizhkin}
    If $r(x) = \sum_{k=1}^N p_k(x) e^{2\pi i\lambda_k x}$, where $p_k(x)$ is a polynomial of degree $\le M-1$ and $E \subset I$ is measurable with $|E| > 0$, then
    \begin{equation}
    \label{nazarov-Tur\'an-kovrizhkin-eqn}
        \|r\|_{L^p(I)} \le \left(\frac{C|I|}{|E|}\right)^{NM - \frac{p-1}{p}}\, \|r\|_{L^p(E)}.
    \end{equation}
\end{lemma}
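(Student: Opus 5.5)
This is a result quoted from the literature (Nazarov, as packaged in Lemma~3 of \cite{k1}), so I would not reprove it from scratch. I would deduce the stated $L^p$ form from the sup-norm Tur\'an--Nazarov inequality. The base estimate is: if $q(x)=\sum_{j=1}^n c_j e^{2\pi i \mu_j x}$ has $n$ real (or complex) frequencies and $E\subset I$ has positive measure, then
\begin{equation*}
\sup_I |q| \le \left(\frac{C|I|}{|E|}\right)^{n-1}\sup_E |q|,
\end{equation*}
with $C$ absolute and independent of the frequencies. The plan has three steps. First, pass from polynomial coefficients to pure exponential sums. Second, invoke (or sketch) the sup-norm inequality. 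Third, convert sup norms to $L^p$ norms by a Chebyshev argument.

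\emph{Step 1 (reduction to $n=NM$ pure exponentials).} The function $x^m e^{2\pi i\lambda x}$ is a limit, uniformly on the compact interval $I$, of divided differences. Concretely, $\frac{d^m}{d\lambda^m}e^{2\pi i\lambda x}$ is the limit of finite difference quotients in $\lambda$ with $m+1$ nearby distinct frequencies. Hence $r$ is a uniform limit on $I$ of exponential sums $r_\varepsilon$ with at most $NM$ distinct frequencies. Since the constant in the sup-norm inequality does not depend on the frequencies, we can pass to the limit. Uniform convergence gives $\sup_I|r_\varepsilon|\to\sup_I|r|$. For $E$ we only need $\sup_E$ or $L^p(E)$ convergence, which also follows from uniform convergence. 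So it suffices to treat $n=NM$ pure exponentials.

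\emph{Step 2 (sup-norm inequality).} I would cite Nazarov's theorem \cite{nazarov} directly. If a sketch is wanted, I would follow Nazarov's route. First treat the case of all frequencies on a line through the origin (Tur\'an's lemma). This is proved by induction on $n$: factor out one exponential, differentiate to kill a term, and use a Rolle/mean-value type argument to control the measure of sublevel sets. Then extend to general complex frequencies by a rotation-and-averaging argument. This step is the genuine obstacle. The bound must be uniform in the frequencies, and in particular independent of their separation, which is exactly what the two-strip theorem needs. Nazarov's proof of this uniformity is delicate, so I would not attempt to reprove it.

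\emph{Step 3 ($L^p$ conversion).} Fix $p\ge 1$ and set
\begin{equation*}
E'=\{x\in E: |r(x)|^p\le 2|E|^{-1}\|r\|_{L^p(E)}^p\}.
\end{equation*}
By Chebyshev's inequality, $|E\setminus E'|\le |E|/2$, so $|E'|\ge |E|/2$. Applying Step 2 to $E'$ with $n=NM$ gives
\begin{equation*}
\sup_I|r|\le \left(\frac{2C|I|}{|E|}\right)^{NM-1}\left(\frac{2}{|E|}\right)^{1/p}\|r\|_{L^p(E)}.
\end{equation*}
We then use $\|r\|_{L^p(I)}\le |I|^{1/p}\sup_I|r|$. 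The total power of $|I|/|E|$ is $NM-1+\frac1p=NM-\frac{p-1}{p}$. Absorbing $2^{1/p}\le 2$ into $C$ yields the stated inequality.
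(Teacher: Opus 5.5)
Your proposal is correct and is essentially the paper's approach. The paper gives no proof and simply imports this $L^p$ form from Lemma~3 of \cite{k1}, which rests on Nazarov's sup-norm theorem. Your divided-difference limit (frequencies $\lambda_k+j\varepsilon$, $0\le j\le M-1$, at most $NM$ of them) followed by the Chebyshev step with $|E'|\ge |E|/2$, giving the exponent $NM-1+\frac1p=NM-\frac{p-1}{p}$, is the standard way to obtain that form.

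Two small corrections. First, delete ``(or complex)'' in your base estimate. For non-real frequencies Nazarov's bound carries an extra factor $e^{2\pi|I|\max_j|\operatorname{Im}\mu_j|}$, and the frequency-independent version already fails for $n=1$, $q(x)=e^{-2\pi a x}$ with $a>0$. This is harmless here, since your perturbed frequencies stay real. Second, drop the optional sketch in Step~2. The Rolle-type induction only covers real exponents, whereas the oscillatory case needed here is the genuinely hard part of Nazarov's theorem and is not reached by rotating frequencies.
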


First, we show a Bernstein inequality: after a modulation, the restriction of $h_j$ to a line in direction $\nu$ has derivatives controlled only by the strip width $W$, independent of the location of the strip. Thus each component can be approximated on $[0,L]$ by a polynomial of high degree. Recombining the two components gives an exponential polynomial with two frequencies. Finally, Lemma \ref{nazarov-Tur\'an-kovrizhkin} controls such exponential polynomials from their values on the relatively dense set, and the Taylor remainder is made small by choosing the degree large enough.

We note that the classical GCC stated in integral form (\ref{eq:classical-GCC-intro}) is equivalent to the super-level set GCC: there exist $\theta, \eta, L > 0$ such that
\begin{equation}
    \label{eq:GCC-super-level}
\left| \left\{ t\in[0,L]: \gamma(x+t\nu)\ge\eta
\right\} \right| \ge \theta L 
\end{equation}
for every $x\in\mathbb T^2$ and every $\nu\in S^1$. To apply Lemma \ref{nazarov-Tur\'an-kovrizhkin} efficiently, it is convenient to work with $\gamma$ satisfying GCC \eqref{eq:GCC-super-level} rather than GGCC \eqref{eq:GGCC-intro}. Indeed, GCC guarantees that the set $\{t\in [0,L]: \gamma(x+t\nu)\ge \eta\}$ has measure at least $\theta L$, and this set can be used directly as $E$ in \eqref{nazarov-Tur\'an-kovrizhkin-eqn}. We bridge this gap through the following observation: $\gamma$ satisfies GGCC in direction $\nu$ if and only if its mollifications $\gamma_\ep$ satisfy GCC in direction $\nu$, with GCC constants that may be chosen uniformly for all sufficiently small $\ep$. We record this equivalence below. 

\begin{lemma}
\label{lemma:ggcc-mollification}
Fix $\nu\in S^1$, and let $\gamma\in L^\infty(\mathbb T^2)$ be non-negative.
For $x\in\mathbb T^2$, $\delta>0$, and $L>0$, set
$$ \mathcal{G}_{x,\nu,L} := \{x+t\nu:0\le t\le L\}$$
and
$$\Gamma_{x,\nu,L,\delta}
:= \left\{y\in\mathbb T^2: \operatorname{dist}(y,\mathcal{G}_{x,\nu,L})<\delta \right\}.$$ 
Let $\phi\in C_c^\infty(\mathbb R^2)$ be non-negative and radial with $\|\phi\|_{L^1(\R^2)} = 1$ and $\operatorname{supp} \phi \subset B(0,1)$. Suppose that $\phi(z)\ge c_\phi>0$ for $|z|\le \frac12$. For sufficiently small $\ep>0$, let $\phi_\ep(x) := \ep^{-2} \phi(\frac{x}{\ep})$ and define $\gamma_\ep:=\gamma*\phi_\ep.$ Then the following conditions are equivalent:

\begin{enumerate}
\item There exist $L,c_0>0$ such that,
$$
\liminf_{\delta\to0} \inf_{x\in\mathbb T^2}
\frac{1}{|\Gamma_{x,\nu,L,\delta}|} \int_{\Gamma_{x,\nu,L,\delta}}\gamma(y)\,dy \ge c_0.
$$

\item There exist $\theta,\eta,L>0$ and $\ep_0>0$ such that, for
every $0<\ep<\ep_0$ and every $x\in\mathbb T^2$,
$$\left| \left\{ t\in[0,L]: \gamma_\ep(x+t\nu)\ge\eta
\right\} \right| \ge \theta L.$$

\end{enumerate}
\end{lemma}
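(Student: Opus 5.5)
Both directions will be proved by comparing averages of $\gamma$ over tubes with averages of $\gamma_\ep$ along segments, using only Fubini and the positivity of $\phi$. Throughout, $\gamma\le\|\gamma\|_\infty=:M$, so $\gamma_\ep\le M$.

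\emph{(1)$\Rightarrow$(2).} Fix $\ep$ small and $x$. Write $Q_\ep:=\{y: \operatorname{dist}(y,\mathcal G_{x,\nu,L})<\ep/2\}$. Because $\phi_\ep\ge c_\phi\ep^{-2}$ on $B(0,\ep/2)$, every point $x+t\nu$ of the segment satisfies
$$\gamma_\ep(x+t\nu)\ge c_\phi\ep^{-2}\int_{B(x+t\nu,\ep/2)}\gamma .$$
Integrating in $t\in[0,L]$ and using Fubini, each point of the tube $Q_\ep$, apart from the two end caps, lies in a ball $B(x+t\nu,\ep/2)$ for a $t$-set of length $\gtrsim\ep$. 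This gives
$$\int_0^L\gamma_\ep(x+t\nu)\,dt\ \gtrsim\ c_\phi\,\ep^{-1}\int_{\Gamma'}\gamma ,$$
where $\Gamma'$ is the tube with its end caps trimmed. Its contribution is comparable to $\ep^{-1}|\Gamma_{x,\nu,L,\ep/2}|\cdot c_0/2\approx L c_0$ by (1), once $\ep$ is below the $\delta_0$ at which the liminf bound becomes $\ge c_0/2$ uniformly in $x$. The caps cost only $O(M\ep^2)$, which is negligible. This yields $\int_0^L\gamma_\ep(x+t\nu)\,dt\ge c_1L$ with $c_1\sim c_\phi c_0$. Since $\gamma_\ep\le M$, the standard Chebyshev step converts this into (2) with $\eta=c_1/2$ and $\theta=c_1/(2M)$: if the super-level set had measure $<\theta L$, the integral would be $<M\theta L+\eta L\le c_1L$.

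\emph{(2)$\Rightarrow$(1).} Fix a small $\delta$ and choose $\ep=\ep(\delta)\to0$; the natural choice is $\ep=\delta$, or even simpler, any fixed $\ep<\ep_0$. Average the estimate of (2) over the base points $x'=x+s\nu^\perp$ with $|s|<\delta$. By Fubini,
$$\int_{\Gamma_{x,\nu,L,\delta}}\gamma_\ep\gtrsim \delta\,\eta\theta L\sim \eta\theta\,|\Gamma_{x,\nu,L,\delta}| .$$
Next, $\int_{\Gamma}\gamma_\ep=\int\gamma\cdot(\mathbf 1_\Gamma*\phi_\ep)$, and $\mathbf 1_\Gamma*\phi_\ep\le\mathbf 1_{\Gamma^{\ep}}$, where $\Gamma^\ep$ is the $\ep$-enlargement of the tube. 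Taking $\ep$ fixed therefore does not work directly, because $\Gamma^\ep$ is much fatter than $\Gamma$ when $\delta\ll\ep$. We must instead take $\ep\le\delta$, which is allowed since (2) holds for all $\ep<\ep_0$. Then $\Gamma^\ep\subset\Gamma_{x,\nu,L+2\delta,2\delta}$ shifted back by $\delta\nu$, a tube of comparable measure. Hence the average of $\gamma$ over a slightly longer and wider tube is $\gtrsim\eta\theta$.

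The main obstacle is the bookkeeping of endpoints and widths: one must relate tubes of length $L+2\delta$ and width $2\delta$ back to the exact $\Gamma_{x,\nu,L',\delta'}$ in (1). This is handled by restating (1) with $L'=L+1$ and the width variable $2\delta\to0$. The liminf bound is then $\ge c\,\eta\theta$ uniformly in $x$, using $|\Gamma_{L+2\delta,2\delta}|\approx 2|\Gamma_{L,\delta}|$ up to $O(\delta^2)$ errors. Note that enlarging the length $L$ is harmless for (1), since averages over a longer segment split into averages over subsegments of length $L$ together with an overlapping piece.
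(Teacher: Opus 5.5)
Your argument is correct in substance, and (1)$\Rightarrow$(2) is the paper's argument: Fubini against $K_{\ep,x,\nu}(y)=\int_0^L\phi_\ep(x+t\nu-y)\,dt$, a lower bound $K_{\ep,x,\nu}\gtrsim\ep^{-1}$ on a thin tube, (1) at $\delta\sim\ep$, then Chebyshev. However, one step there is false as written. A point $y$ at distance $d<\ep/2$ from the segment lies in $B(x+t\nu,\ep/2)$ only for $t$ in an interval of length $2\sqrt{\ep^2/4-d^2}$, which tends to $0$ as $d\to\ep/2$. So the bound $\int_0^L\gamma_\ep(x+t\nu)\,dt\gtrsim\ep^{-1}\int_{\Gamma'}\gamma$ does not follow; it fails for nonnegative $\gamma$ concentrated near the lateral boundary of the radius-$\ep/2$ tube. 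Shrink the tube, as the paper does. If $y\in\Gamma_{x,\nu,L,\ep/4}$ and $|x+t_0\nu-y|<\ep/4$, then every $t\in[0,L]\cap[t_0-\ep/4,t_0+\ep/4]$ gives $|x+t\nu-y|<\ep/2$. That is a set of length at least $\ep/4$, even at the ends, so no cap subtraction is needed. Then apply (1) with $\delta=\ep/4$.

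For (2)$\Rightarrow$(1) your route genuinely differs. The paper stays on one segment and uses the matching upper bound $K_{\ep,x,\nu}\le\frac{C_L}{\ep}\mathbf 1_{\Gamma_{x,\nu,L,\ep}}$. This gives $\eta\theta L\le\frac{C_L}{\ep}\int_{\Gamma_{x,\nu,L,\ep}}\gamma$, and one takes $\delta=\ep$, so both directions are the two halves of a single two-sided kernel estimate. You instead integrate (2) over the base points $x+s\nu^\perp$, $|s|<\delta$, and then dualize via $\mathbf 1_\Gamma*\phi_\ep\le\mathbf 1_{\Gamma^\ep}$ with $\ep\le\delta$. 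On $\T^2$ this parametrization covers points up to $O(L+1)$ times, which costs only an $L$-dependent constant. Your route decouples $\ep$ from $\delta$, at the price of an extra averaging step.

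The endpoint bookkeeping you single out as the main obstacle does not arise. $\Gamma_{x,\nu,L,\delta}$ is the $\delta$-neighborhood of the segment, caps included, so its $\ep$-neighborhood lies in $\Gamma_{x,\nu,L,\delta+\ep}\subset\Gamma_{x,\nu,L,2\delta}$, with the same base point and length. Since $|\Gamma_{x,\nu,L,2\delta}|\lesssim_L\delta$, this gives (1) with the original $L$ along widths $2\delta\to0$. Your $L'=L+1$ patch is valid but unnecessary.
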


\begin{proof}
Observe that $|\Gamma_{x,\nu,L,\delta}| \approx_L \delta$, for every $x\in\mathbb T^2, \nu \in S^1$ and every sufficiently small $\delta>0$. The constants in $\approx_L$ are independent of $x$ and $\nu$. We now compare tube averages of $\gamma$ with line integrals of $\gamma_\ep$. Define
$$K_{\ep,x,\nu}(y) := \int_0^L \phi_\ep(x+t\nu-y)\,dt.$$
By Fubini's theorem,
$$ \int_0^L \gamma_\ep(x+t\nu)\,dt = \int_{\mathbb T^2} \gamma(y)K_{\ep,x,\nu}(y)\,dy.$$

We claim that there are constants $c,C_L>0$ such that
$$
\frac{c}{\ep}
\mathbf 1_{\Gamma_{x,\nu,L,\ep/4}}(y)
\le K_{\ep,x,\nu}(y) \le \frac{C_L}{\ep} \mathbf 1_{\Gamma_{x,\nu,L,\ep}}(y)$$
for every sufficiently small $\ep>0$.

For the lower bound, consider $y\in\Gamma_{x,\nu,L,\ep/4}.$ Then there is some $t_0\in[0,L]$ such that $|x+t_0\nu - y|<\frac{\ep}{4}.$ The interval $$ [0,L]\cap \left[t_0-\frac{\ep}{4}, t_0+\frac{\ep}{4}\right] $$ has length at least $\ep/4$, and for every $t$ in this interval, $|x+t\nu - y| < \frac{\ep}{2}.$ This implies $\phi_\ep(x+t\nu-y) \ge c_\phi\, \ep^{-2},$ and therefore
$$ K_{\ep,x,\nu}(y)
\ge \frac{c_\phi}{4\ep}.
$$

For the upper bound, note that $K_{\ep,x,\nu}(y)=0$ unless $y\in\Gamma_{x,\nu,L,\ep}$ as $\operatorname{supp} \phi \subset B(0,1)$. Moreover,
$$K_{\ep,x,\nu}(y) \le \|\phi\|_\infty\, \ep^{-2}
\left| \left\{t\in[0,L]: \operatorname{dist}(x+t\nu,y)<\ep
\right\} \right|.$$
The last set has measure $\lesssim_L \ep$. Thus,
$$
K_{\ep,x,\nu}(y)
\le
\frac{C_L}{\ep}\,
\mathbf 1_{\Gamma_{x,\nu,L,\ep}}(y).
$$

We first prove that $(1)$ implies $(2)$. Suppose that $\gamma$ satisfies
GGCC in the direction $\nu$. By the definition of the liminf, there is $\delta_0>0$ such that
$$\int_{\Gamma_{x,\nu,L,\delta}}\gamma(y)\,dy
\ge \frac{c_0}{2} |\Gamma_{x,\nu,L,\delta}|,$$
for every $x\in\mathbb T^2$ and every $\delta\in (0, \delta_0)$. Using the lower bound for $K_{\ep,x,\nu}$ we obtain
$$\int_0^L\gamma_\ep(x+t\nu)\,dt = \int_{\mathbb T^2} \gamma(y)K_{\ep,x,\nu}(y)\,dy \ge \frac{c}{\ep}
\int_{\Gamma_{x,\nu,L,\ep/4}}\gamma(y)\,dy \ge \frac{cc_0}{2\ep} |\Gamma_{x,\nu,L,\ep/4}| \ge m, $$
where $m>0$ is independent of $x$ and of all sufficiently small
$\ep$. Set $M:=\|\gamma\|_{L^\infty(\mathbb T^2)}.$ Then, $0\le \gamma_\ep \le M$. Let $\eta:=\frac{m}{2L}$ and $E_{x,\ep} := \left\{ t\in[0,L]: \gamma_\ep(x+t\nu)\ge\eta \right\}.$ Then,
$$m \le \int_0^L\gamma_\ep(x+t\nu)\,dt \le \eta L+M|E_{x,\ep}| =
\frac{m}{2}+M|E_{x,\ep}|.$$
It follows that
$|E_{x,\ep}| \ge \frac{m}{2M}.$ Thus, setting
$\theta:=\frac{m}{2ML},$ we obtain $$|E_{x,\ep}|\ge\theta L$$ for every $x\in\mathbb T^2$ and every sufficiently small
$\ep>0$. This proves $(2)$. Conversely, suppose that $(2)$ holds. Then, 
$$ \int_0^L\gamma_\ep(x+t\nu)\,dt
\ge \eta\,\theta L
=:m,$$
for every $x\in\mathbb T^2$ and every sufficiently small $\ep$.
Using the upper bound for $K_{\ep,x,\nu}$ gives
$$ m \le \int_0^L\gamma_\ep(x+t\nu)\,dt = \int_{\mathbb T^2}
\gamma(y)K_{\ep,x,\nu}(y)\,dy \le \frac{C_L}{\ep}
\int_{\Gamma_{x,\nu,L,\ep}}\gamma(y)\,dy. $$
Hence, 
$$ \int_{\Gamma_{x,\nu,L,\ep}}\gamma(y)\,dy
\ge \frac{m}{C_L}\ep.$$
Since $|\Gamma_{x,\nu,L,\ep}| \lesssim_L \ep$, we obtain
$$ \frac{1}{|\Gamma_{x,\nu,L,\ep}|}
\int_{\Gamma_{x,\nu,L,\ep}}\gamma(y)\,dy \gtrsim_{\theta,\eta, L} 1.$$
This estimate is uniform in $x$ and in all sufficiently small
$\ep$. Therefore
$$\liminf_{\delta\to0}
\inf_{x\in\mathbb T^2}
\frac{1}{|\Gamma_{x,\nu,L,\delta}|}
\int_{\Gamma_{x,\nu,L,\delta}}\gamma(y)\,dy \gtrsim_{\theta,\eta, L} 1,$$
which proves $(1)$.
\end{proof}

Finally, we begin the proof of the two-strip theorem.

\begin{proof}[Proof of Theorem \ref{two-strip-theorem}]
Using properties of approximate identities, it suffices to show (\ref{eq:two-strip-theorem}) for $\gamma_\ep$ where $\ep$ is sufficiently small, with constants independent of $\ep$. Indeed, suppose that $$\|h\|_{L^2(\mathbb T^2)}^2 \le C \int_{\mathbb T^2} |h(x)|^2\,\gamma_\ep(x)\,dx,$$
for $h$ satisfying the hypothesis of Theorem \ref{two-strip-theorem}, where $C$ is independent of $\ep$. Fubini's theorem gives $$ \int_{\mathbb T^2}|h(x)|^2\gamma_\ep(x)\,dx
= \int_{\mathbb T^2} \gamma(y)
\bigl(|h|^2*{\phi}_\ep\bigr)(y)\,dy.
$$
Since ${\phi}_\ep$ is an approximate identity, $|h|^2*{\phi}_\ep
\to |h|^2$ in $L^1(\mathbb T^2)$, giving 
$$\left| \int_{\mathbb T^2}|h|^2\gamma_\ep
- \int_{\mathbb T^2}|h|^2\gamma
\right| \le \|\gamma\|_{L^\infty}
\left\| |h|^2*{\phi}_\ep-|h|^2
\right\|_{L^1} \xrightarrow{\ep \to 0} 0.$$
Letting $\ep\to0$ therefore yields (\ref{eq:two-strip-theorem}). Hereafter, we will work to show (\ref{eq:two-strip-theorem}) for $\gamma_\ep$. In view of Lemma \ref{lemma:ggcc-mollification}, $\gamma_\ep$ satisfy the GCC (\ref{eq:GCC-super-level}) with constants independent of $\ep$ for small enough $\ep$. 

    Without loss of generality, assume $$\Sigma_j = \{\xi\in\mathbb R^2: |\xi\cdot\nu-a_j|\le W\},$$
for $j = 1,2$ where $a_1,a_2\in \R$. We first decompose the Fourier support into two disjoint pieces. Let $\Lambda_j=\operatorname{supp}\widehat h\cap \Sigma_j$ with the convention that, if the two strips overlap, lattice points in the
overlap are assigned to exactly one of the two pieces. Thus,
$$\operatorname{supp}\widehat h=\Lambda_1\sqcup \Lambda_2.$$
Write $h=h_1+h_2$ where $\widehat h_j=\widehat h\,\mathbf 1_{\Lambda_j}$. Then,
\begin{equation}
    \label{two-strip-eq1}
    \|h\|_{L^2(\mathbb T^2)}^2 = \|h_1\|_{L^2(\mathbb T^2)}^2
+ \|h_2\|_{L^2(\mathbb T^2)}^2.
\end{equation}

For $j=1,2$, define $D_j:=\nu\cdot \nabla_x-2\pi i a_j.$ If $\xi\in \Lambda_j$, then $$D_j e^{2\pi i \xi\cdot x}
= 2\pi i\,(\xi\cdot\nu-a_j)\, e^{2\pi i \xi\cdot x}.$$

Since $|\xi\cdot\nu-a_j|\le W$ on $\Lambda_j$, Plancherel's theorem gives the Bernstein inequality
$$
\|D_j^k h_j\|_{L^2(\mathbb T^2)}
   \le (2\pi W)^k\, \|h_j\|_{L^2(\mathbb T^2)}
$$
for every integer $k\ge 0$. Now, we restrict to line segments in the direction $\nu$. For $x\in \mathbb T^2$ and $0\le t\le L$, set
\[
H_x(t):=h(x+t\nu).
\]
Similarly, define
\[
g_{j,x}(t):=e^{-2\pi i a_j t}h_j(x+t\nu).
\]
Then
\[
H_x(t)
=
e^{2\pi i a_1 t}g_{1,x}(t)
+
e^{2\pi i a_2 t}g_{2,x}(t).
\]
Moreover, we can show 
\[
\frac{d^k}{dt^k}g_{j,x}(t)
=
e^{-2\pi i a_j t}(D_j^k h_j)(x+t\nu).
\]
by induction on $k\in \N$. 

Now, we set up the Taylor approximation step. Fix a degree $d\ge 0$, to be chosen later. For each $x$ and $j=1,2$, let $T_{j,x}$ be the Taylor polynomial
of $g_{j,x}$ of degree $d$ centered at $t=0$:
\[
T_{j,x}(t)
=
\sum_{m=0}^d \frac{g_{j,x}^{(m)}(0)}{m!}t^m.
\]
Write $$ g_{j,x}(t)=T_{j,x}(t)+R_{j,x}(t),$$
where the remainder term $R_{j,x}$ can be expressed in integral form as
$$ R_{j,x}(t) = \int_0^t \frac{(t-s)^d}{d!}g_{j,x}^{(d+1)}(s)\,ds.$$
Therefore, for $0\le t\le L$,
\[
|R_{j,x}(t)|
\le
\frac{L^d}{d!}\int_0^L |g_{j,x}^{(d+1)}(s)|\,ds.
\]
Squaring and integrating over $0\le t\le L$, we obtain
$$
\|R_{j,x}\|_{L^2(0,L)}^2
\le
\frac{L^{2d+2}}{(d!)^2}
\int_0^L |g_{j,x}^{(d+1)}(s)|^2\,ds.
$$
Integrating this inequality over $x\in \mathbb T^2$, we get
$$
\int_{\mathbb T^2}\int_0^L |R_{j,x}(t)|^2\,dt\,dx
\le
\frac{L^{2d+2}}{(d!)^2}
\int_0^L \|D_j^{d+1}h_j\|_{L^2(\mathbb T^2)}^2\,dt.
$$
Using the Bernstein inequality from above,
\[
\int_{\mathbb T^2}\int_0^L |R_{j,x}(t)|^2\,dt\,dx
\le
L\left(\frac{(2\pi W L)^{d+1}}{d!}\right)^2
\|h_j\|_{L^2(\mathbb T^2)}^2.
\]
Set $$\ep_d := \left(\frac{(2\pi W L)^{d+1}}{d!}\right)^2.$$
Then,
\begin{equation}
    \label{two-strip-eq2}
    \frac1L\int_{\mathbb T^2}\int_0^L |R_{j,x}(t)|^2\,dt\,dx
\le \ep_d\, \|h_j\|_{L^2(\mathbb T^2)}^2.
\end{equation}

Now define the recombined Taylor approximation $$T_x(t)
:= e^{2\pi i a_1 t}\, T_{1,x}(t)
+ e^{2\pi i a_2 t}\, T_{2,x}(t),$$
and the recombined remainder
$$ R_x(t) := e^{2\pi i a_1 t}R_{1,x}(t)
+ e^{2\pi i a_2 t}R_{2,x}(t).$$
Thus,
\[
H_x(t)=T_x(t)+R_x(t).
\]
Using $|a+b|^2\le 2|a|^2+2|b|^2$ together with (\ref{two-strip-eq1}) and (\ref{two-strip-eq2}), we have
\begin{equation}
    \label{two-strip-eq3}
    \frac1L\int_{\mathbb T^2}\int_0^L |R_x(t)|^2\,dt\,dx
\le 2\ep_d \,\|h\|_{L^2(\mathbb T^2)}^2.
\end{equation}

We now use Nazarov's Tur\'an inequality. For each $x\in \mathbb T^2$, define
$$ E_{x, \ep}:=\{t\in [0,L]:\gamma_\ep(x+t\nu)\ge \eta\}.$$
We have $|E_{x, \ep}|\ge \theta L$ for every $x\in \mathbb T^2$ by assumption. We apply Lemma \ref{nazarov-Tur\'an-kovrizhkin} to
$$ T_x(t) = e^{2\pi i a_1 t}\, T_{1,x}(t)
+ e^{2\pi i a_2 t}\, T_{2,x}(t),$$
to get
$$
\|T_x\|_{L^2(0,L)}
\le
\left(\frac{CL}{|E_{x, \ep}|}\right)^{2(d+1)-\frac12}
\|T_x\|_{L^2(E_{x, \ep})}.
$$
Since $|E_{x, \ep}|\ge \theta L$, this gives
\[
\|T_x\|_{L^2(0,L)}
\le
\left(\frac{C}{\theta}\right)^{2d+\frac32}
\|T_x\|_{L^2(E_{x, \ep})}.
\]
Squaring this estimate, we obtain
$$ \int_0^L |T_x(t)|^2\,dt
\le A_{d,\theta}
\int_{E_{x, \ep}} |T_x(t)|^2\,dt,$$
where
$$A_{d,\theta} := \left(\frac{C}{\theta}\right)^{4d+3}.$$
Integrating over $x\in\mathbb T^2$, we get
\begin{equation}
    \label{two-strip-eq4}
    \int_{\mathbb T^2}\int_0^L |T_x(t)|^2\,dt\,dx
    \le A_{d,\theta} \int_{\mathbb T^2}\int_{E_{x, \ep}} |T_x(t)|^2\,dt\,dx.
\end{equation}

We now estimate $ \|h\|_{L^2(\mathbb T^2)}$ . We have
$$ \|h\|_{L^2(\mathbb T^2)}^2
= \frac1L \int_{\mathbb T^2}\int_0^L |h(x+t\nu)|^2\,dt\,dx
= \frac1L \int_{\mathbb T^2}\int_0^L |H_x(t)|^2\,dt\,dx.$$

Since $H_x=T_x+R_x$, we have
$$ |H_x(t)|^2
\le 2|T_x(t)|^2+2|R_x(t)|^2,$$
giving
$$\|h\|_{L^2(\mathbb T^2)}^2
\le \frac{2}{L} \int_{\mathbb T^2}\int_0^L |T_x(t)|^2\,dt\,dx
+ \frac{2}{L} \int_{\mathbb T^2}\int_0^L |R_x(t)|^2\,dt\,dx.$$

Using (\ref{two-strip-eq3}) and (\ref{two-strip-eq4}),
we get
$$ \|h\|_{L^2(\mathbb T^2)}^2
\le \frac{2A_{d,\theta} }{L}
\int_{\mathbb T^2}\int_{E_{x, \ep}} |T_x(t)|^2\,dt\,dx
+ 4\ep_d\|h\|_{L^2(\mathbb T^2)}^2.$$

Next, $T_x=H_x-R_x$ gives
$$|T_x(t)|^2
\le 2|H_x(t)|^2+2|R_x(t)|^2,$$
and so we have
$$\|h\|_{L^2(\mathbb T^2)}^2
\le \frac{4A_{d, \theta}}{L}
\int_{\mathbb T^2}\int_{E_{x, \ep}} |H_x(t)|^2\,dt\,dx
+ \frac{4A_{d, \theta}}{L}
\int_{\mathbb T^2}\int_{E_{x, \ep}} |R_x(t)|^2\,dt\,dx
+ 4\ep_d\, \|h\|_{L^2(\mathbb T^2)}^2.$$

Since $E_{x, \ep}\subset [0,L]$, we can estimate the second term by the full remainder term and use \eqref{two-strip-eq3} to get
$$ \|h\|_{L^2(\mathbb T^2)}^2
\le \frac{4A_{d, \theta}}{L}
\int_{\mathbb T^2}\int_{E_{x, \ep}} |H_x(t)|^2\,dt\,dx
+ (8A_{d, \theta}+4)\, \ep_d\, 
\|h\|_{L^2(\mathbb T^2)}^2.$$

We now choose $d$ sufficiently large so that $(8A_{d, \theta}+4)\,\ep_d \le \frac12.$  Absorbing the last term into
the left-hand side gives 
$$ \|h\|_{L^2(\mathbb T^2)}^2
\le \frac{8A_{d, \theta}}{L}
\int_{\mathbb T^2}\int_{E_{x, \ep}} |H_x(t)|^2\,dt\,dx.$$

Finally, by the definition of $E_{x, \ep}$, we have $\mathbf 1_{E_{x, \ep}}(t)\le \eta^{-1}\,\gamma_\ep(x+t\nu).$ Thus,
$$ \|h\|_{L^2(\mathbb T^2)}^2
\le \frac{8A_{d,\theta}}{\eta}
\int_{\mathbb T^2}|h(x)|^2\gamma_\ep(x)\,dx.$$

\end{proof}

\section{Trigonometric Polynomials with Spectra in Annuli of Variable Width}

Let $0\le \alpha \le \frac{1}{2}$ and define
$$
A_{R,\alpha}:=\{\xi\in\mathbb R^2: R\le |\xi|\le R+R^{-\alpha}\}.
$$
We prove the following observability estimate for functions with spectra in $A_{R,\alpha}$. 

\begin{tcolorbox}[colback=SeaGreen!10, colframe=SeaGreen!80, boxrule=0.5mm, left=2mm, right=2mm, boxsep=1mm, arc=1mm.]
\begin{theorem}
\label{thm:annular-observability}
Let $0\le\alpha\le\frac12$, and set $s:=\frac12-\alpha.$ Suppose that $\gamma\in H^s(\mathbb T^2)\cap L^\infty(\mathbb T^2)$ is non-negative and satisfies the generalized geometric control condition (GGCC) in every direction: there exist $L,c_0>0$ such that
$$
\liminf_{\delta\to0}
\inf_{x\in\mathbb T^2} \inf_{\nu \in S^1}
\frac{1}{|\Gamma_{x,\nu,L,\delta}|}
\int_{\Gamma_{x,\nu,L,\delta}}\gamma(y)\,dy
\ge c_0.
$$
Then there exists a constant $C>0$, independent of $R$, such that
\begin{equation*}
\|f\|_{L^2(\mathbb T^2)}^2 \le C\int_{\mathbb T^2}|f(x)|^2\, \gamma(x)\,dx,
\end{equation*}
whenever
$$\operatorname{supp}\widehat f
\subset A_{R,\alpha}\cap\mathbb Z^2.$$
\end{theorem}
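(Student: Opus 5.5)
Decompose the annulus into antipodal pairs of strips, apply the two-strip theorem (Theorem \ref{two-strip-theorem}) to each pair, and control the cross terms between different pairs using the $H^s$ regularity of $\gamma$. We may assume $R$ is large, since for bounded $R$ the space of admissible $f$ is finite dimensional and unique continuation gives the estimate.

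\textbf{Step 1: sectors and strip-pairs.} Partition $S^1$ into $N\approx \sqrt R$ arcs $I_k$ of length $\approx R^{-1/2}$, arranged so that $-I_k$ is again an arc of the partition. Let $S_k$ be the lattice points of $A_{R,\alpha}$ whose direction lies in $I_k\cup(-I_k)$. Let $\nu_k$ be the center of $I_k$. Each half of $S_k$ then lies in a strip parallel to $\nu_k^\perp$ of length $\approx\sqrt R$. Its width in the $\nu_k$ direction is $\lesssim R\cdot R^{-1}+R^{-\alpha}\lesssim 1$, because of the curvature sag $R(1-\cos(R^{-1/2}))\approx 1$. So $f_k:=\sum_{\xi\in S_k}\widehat f(\xi)e^{2\pi i\xi\cdot x}$ has spectrum in two parallel strips of width $\le W$, with $W$ absolute. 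By Theorem \ref{two-strip-theorem}, and since GGCC holds uniformly in all directions, we get $\|f_k\|_2^2\le C_{\mathrm{str}}\int|f_k|^2\gamma$. Here $C_{\mathrm{str}}$ is uniform in $\nu_k$: the proof only uses the GCC constants $\theta,\eta,L$ from Lemma \ref{lemma:ggcc-mollification}, and these can be taken uniform in $\nu$ by the same argument with the infimum over $\nu$.

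\textbf{Step 2: pigeonholing for separation.} Expanding gives
\[\int|f|^2\gamma=\sum_k\int|f_k|^2\gamma+\sum_{j\ne k}\int f_j\overline{f_k}\gamma.\]
To make the off-diagonal sum small, fix a large integer $M$ and split the indices into $M$ residue classes mod $M$. Some class $\mathcal K$ carries at least a $1/M$ fraction of $\|f\|_2^2$. This is the norm loss: we lose only a factor $M$, and the remainder is handled by running the argument for each class and summing, or by rotating the partition. Distinct sectors in $\mathcal K$ are then angularly separated by $\gtrsim M R^{-1/2}$. Put $g=\sum_{k\in\mathcal K}f_k$. It then suffices to prove
\[\Bigl|\sum_{j\ne k\in\mathcal K}\int f_j\overline{f_k}\gamma\Bigr|\le \tfrac{1}{2C_{\mathrm{str}}}\|g\|_2^2.\]
From this, $\|g\|^2\lesssim\int|g|^2\gamma$, and summing over the $M$ classes with the triangle-type comparison $\int|f|^2\gamma$ versus $\int|g|^2\gamma$ gives the result. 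Reconciling these classes is delicate; an alternative is to smooth with a partition of unity and accept a fixed loss.

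\textbf{Step 3: the cross terms (main obstacle).} By Parseval,
\[\sum_{j\neq k}\int f_j\overline{f_k}\gamma=\sum_{\xi\in S_j,\,\zeta\in S_k,\,j\ne k}\widehat f(\xi)\overline{\widehat f(\zeta)}\,\widehat\gamma(\zeta-\xi).\]
Write $m=\zeta-\xi$. Separation forces $|m|\gtrsim M$, which is at least the chord length $\approx R\cdot MR^{-1/2}$ divided by $\sqrt R$. Schur's test with weights reduces the problem to a lattice count. For fixed $m\ne0$, let $n(m)$ be the number of pairs $\xi,\xi+m$ both in $A_{R,\alpha}\cap\mathbb Z^2$. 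These are lattice points of $A_{R,\alpha}\cap(A_{R,\alpha}-m)$, which is a pair of curvilinear parallelograms of area $\approx R\cdot R^{-2\alpha}/|m|$ when $m$ is transverse. Bounding $n(m)$ by the lattice points in that thin region plus a count controlled by divisor/Jarník-type bounds, one expects $n(m)\lesssim 1+R^{1-2\alpha}/|m|$. Cauchy--Schwarz then bounds the cross sum by
\[\|g\|_2^2\Bigl(\sum_{|m|\gtrsim M}|\widehat\gamma(m)|^2\,|m|^{2s}\Bigr)^{1/2}\Bigl(\sup_\xi\sum_{m}\mathbf 1\,|m|^{-2s}\Bigr)^{1/2}.\]
This uses $2s=1-2\alpha$: the number of $\zeta\in A_{R,\alpha}$ with $|\zeta-\xi|\approx r$ is $\lesssim 1+rR^{-\alpha}\cdot(\text{curvature factor})$, which should match $r^{2s}$ up to constants after a dyadic decomposition in $|m|$. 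The first factor is the $H^s$ tail of $\gamma$ beyond frequency $\approx M$, which tends to $0$ as $M\to\infty$. Choosing $M$ large, independent of $R$, closes the argument.

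The lattice-point counting needed to make the dyadic shell sum uniformly $O(1)$, with exactly the exponent $2s$, is the step most likely to need care. I would first try the elementary bound that a circle arc of length $\ell$ contains $\lesssim 1+\ell R^{-1/3}$ lattice points (Jarník). If that falls short at the endpoint, I would instead weight by $|m|^{2s}$ only over pairs where the count is large.
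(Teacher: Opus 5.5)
Your Step 1 matches the paper: sector pairs of angular size $\sim R^{-1/2}$, each lying in two parallel strips of bounded width, with the two-strip theorem applied using constants uniform in the direction. The compactness argument for bounded $R$ also matches. The gaps are in Steps 2 and 3.

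\textbf{Step 2 does not close.} Keeping one residue class $\mathcal K$ with $\|g\|_2^2\ge \|f\|_2^2/M$ tells you nothing about $f$. The function $g$ is a Fourier projection of $f$, and $\int|g|^2\gamma$ is not controlled by $\int|f|^2\gamma$. If you run the argument on every class and sum, you bring back the interactions between neighbouring sectors of different classes. Their frequencies can be at distance $O(1)$, so the loss is not just ``a factor $M$''. You need to discard a \emph{small} piece instead. The paper removes, from each angular cell of size $\Delta\sim R^{-1/2}$, a sliver of width $\varepsilon\Delta$ at a common offset $\phi$. Averaging over $\phi\in[0,\Delta)$ gives an offset for which the discarded part satisfies $\|f_1\|_2^2\le\varepsilon\|f\|_2^2$. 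This piece is absorbed at the end via $\|f_1\sqrt\gamma\|_2^2\le\varepsilon\|\gamma\|_\infty\|f\|_2^2$. Meanwhile the surviving sector pairs are separated modulo $\pi$ by an angle $\gtrsim\varepsilon R^{-1/2}$. Your remark about rotating the partition points in this direction, but it is not carried out.

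\textbf{Step 3 has the wrong scale and the wrong organization.} An angular gap $\beta$ forces $|\mu-\lambda|\ge|\mu|\,|\sin\angle(\lambda,\mu)|\gtrsim R\beta$. So the separation is $\gtrsim\varepsilon\sqrt R$ (or $M\sqrt R$ in your setup), not $\gtrsim M$. Dividing the chord by $\sqrt R$ is an error, and the $\sqrt R$ scale is exactly what makes the endpoint $s=\frac12-\alpha$ work.

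More seriously, your bound through $\sup_\xi\sum_m|m|^{-2s}$ cannot be uniform in $R$. For $\alpha=0$ the annulus contains $\approx 2\pi R$ lattice points spread along its length. Hence $\sum_\zeta|\zeta-\xi|^{-1}$ over $|\zeta-\xi|\gtrsim\sqrt R$ is of order $\log R$, and heuristically of order $R^{\alpha}$ for $\alpha>0$. The $H^s$ tail of a general $\gamma\in H^s$ may decay arbitrarily slowly, so it cannot absorb this loss.

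Your $n(m)$ is the right quantity, but it must enter through the difference variable. Cauchy--Schwarz in $\sigma=\mu-\lambda$ bounds the cross terms by
$\|\widehat\gamma\|_{\ell^2(|\sigma|\gtrsim\varepsilon\sqrt R)}\,\bigl(\sup_\sigma N(\sigma)\bigr)^{1/2}\|f_2\|_2^2$.
Here $N(\sigma)$ counts the $\lambda$ with $\lambda,\lambda+\sigma\in A_{R,\alpha}$ and $|\sin\angle(\lambda,\lambda+\sigma)|\gtrsim\varepsilon R^{-1/2}$.

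Lemma~\ref{lemma:lattice-point-count} gives $N(\sigma)\lesssim\varepsilon^{-1}R^{1/2-\alpha}$ by an elementary length argument. The transverse overlap projects radially onto $O(1)$ arcs of $RS^1$ of total length $\lesssim R^{-\alpha}/(\varepsilon R^{-1/2})$. A bounded neighbourhood of an arc of length $\ell$ contains $\lesssim 1+\ell$ lattice points. No Jarn\'ik or divisor bounds are needed. Your area heuristic $R^{1-2\alpha}/|m|$ is not a valid lattice count for a region of width $R^{-\alpha}<1$.

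With this count, $(\sup_\sigma N)^{1/2}(\varepsilon\sqrt R)^{-s}\lesssim\varepsilon^{-1/2-s}$ uniformly in $R$ when $s=\frac12-\alpha$. The $H^s$ tail of $\gamma$ beyond $\varepsilon\sqrt R$ then tends to $0$ as $R\to\infty$ with $\varepsilon$ fixed, which closes the argument.
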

\end{tcolorbox}

\begin{remark}
The circle in Theorem \ref{thm:annular-observability} may be replaced with any smooth closed curve $\Gamma\subset\mathbb R^2$ with everywhere positive curvature. More precisely, the same conclusion holds with
$$A_{R,\alpha}(\Gamma) :=\{\xi\in\mathbb R^2:\operatorname{dist}(\xi,R\Gamma)\le R^{-\alpha}\},$$
with constants depending on $\Gamma$.
\end{remark}

The key technical estimate used in the proof of Theorem \ref{thm:annular-observability} is a lattice point count. We record this separately as a lemma.

\begin{lemma}
    \label{lemma:lattice-point-count}
    Let $0\le \alpha\le \frac{1}{2}$ and fix $0<\ep<1$. For every $\sigma\in\mathbb Z^2$, we have the estimate
$$\#\left\{ \lambda\in A_{R,\alpha} \cap \mathbb Z^2: \lambda+\sigma\in A_{R,\alpha} \,\text{ and }\, |\sin\angle(\lambda,\lambda+\sigma)| \gtrsim \ep R^{-1/2} \right\}\lesssim \ep^{-1}R^{1/2-\alpha},$$
for all sufficiently large $R$, with constants in $\lesssim$ independent of $R$ and $\sigma$.
\end{lemma}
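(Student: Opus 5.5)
The plan is to read the condition geometrically. Both $\lambda$ and $\lambda+\sigma$ lie in the thin annulus, so their lengths agree to within $R^{-\alpha}$. Thus $\lambda$ lies in the intersection of $A_{R,\alpha}$ with its translate $A_{R,\alpha}-\sigma$. The angle condition forces $|\sigma|\gtrsim \ep R^{1/2}$: since $|\lambda|,|\lambda+\sigma|\approx R$, we have $|\sigma|\ge |\lambda+\sigma|\,|\sin\angle(\lambda,\lambda+\sigma)|$ up to constants, which gives $|\sigma|\gtrsim \ep R^{1/2}$. (If $|\sigma|>2R+2$ the set is empty.) I will count lattice points in the intersection of two annuli of radius $\approx R$, width $R^{-\alpha}$, with centers $0$ and $-\sigma$ at distance $|\sigma|$.

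\textbf{Step 1 (geometry).} Two circles of radius $R$ with centers at distance $|\sigma|$ cross transversally at two points. The crossing angle $\beta$ satisfies $\sin\beta \approx |\sigma|/R$, at least for $|\sigma|\le R$. For $|\sigma|$ comparable to $2R$ the circles become nearly tangent. In that regime the same angle appears in the hypothesis: $\angle(\lambda,\lambda+\sigma)$ is exactly the angle between the radial normals. So I will use the hypothesis directly, $\sin\beta\gtrsim \ep R^{-1/2}$, instead of $|\sigma|/R$. The intersection of the two width-$w=R^{-\alpha}$ annuli, restricted to points where the normals make angle $\beta$ with $|\sin\beta|\gtrsim \ep R^{-1/2}$, is then contained in at most two curvilinear parallelograms. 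Each has side lengths $\approx w/\sin\beta \lesssim \ep^{-1}R^{1/2-\alpha}$ and width $\approx w\le 1$.

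\textbf{Step 2 (counting).} A lattice point count in a region of diameter $D\ge1$ and width $\le 1$ is at most $\lesssim D+1$. The region is covered by $O(D)$ unit balls, each containing $O(1)$ lattice points. With $D\lesssim \ep^{-1}R^{1/2-\alpha}$, and since $\ep^{-1}R^{1/2-\alpha}\ge 1$ because $\alpha\le\frac12$ and $\ep<1$, this gives the bound $\lesssim\ep^{-1}R^{1/2-\alpha}$ uniformly in $\sigma$.

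\textbf{Main obstacle.} The delicate point is Step 1: justifying the parallelogram description with uniform constants. Curvature of the annuli could make the intersection longer than the linearized estimate suggests when $\beta$ is small, of order $R^{-1/2}$. I will handle this by parametrizing $\lambda=r e^{i\phi}$ with $r\in[R,R+w]$. The function $\phi\mapsto|\lambda+\sigma|$ has derivative $\approx R\sin\beta(\phi)$. The constraint $|\,|\lambda+\sigma|-r|\le w$ then confines $\phi$ to an interval of length $\lesssim w/(R\sin\beta)$, hence arclength $\lesssim w/\sin\beta$, provided $\sin\beta$ is roughly constant on that interval. The angle varies by at most the arc's angular width, and that width, $\lesssim R^{-1/2-\alpha}\ep^{-1}$, is small compared with $\ep R^{-1/2}$ once $R$ is large. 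This is where the restriction to sufficiently large $R$ enters. If this turns out to be awkward, I would instead dyadically decompose in $\sin\beta$ and sum the resulting geometric series.
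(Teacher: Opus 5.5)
Your outline matches the paper's. You radially project to $RS^1$ and use that $\Phi(s)=|y(s)+\sigma|$ has arclength derivative exactly $\sin\angle(y(s),y(s)+\sigma)$ (your $R\sin\beta$ in the angle variable). You then bound the transverse part of $\{|\Phi-R|\lesssim h\}$, with $h=R^{-\alpha}$, by $h/(\ep R^{-1/2})$, and convert length into a lattice count.

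The step you use to get the length bound fails in the central case $\alpha=0$. You need $\sin\beta$ to be roughly constant on the arc. You argue that $\beta$ moves by at most the angular width $\ep^{-1}R^{-1/2-\alpha}$, and that this is small compared with $\ep R^{-1/2}$ for large $R$. The ratio is $\ep^{-2}R^{-\alpha}$, which equals $\ep^{-2}>1$ for every $R$ when $\alpha=0$.

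This is not just a lossy estimate. In the near-antipodal regime you flagged, take $\alpha=0$, $R$ an integer and $\sigma=(-2R,0)$. The overlap of the two annuli near $(R,0)$ consists of points $(R+u,t)$ with $|t|\lesssim\sqrt R$, and there $\pi-\beta\approx 2|t|/R$. So on the transverse part $\sin\beta$ ranges from about $\ep R^{-1/2}$ up to about $R^{-1/2}$, a factor $\ep^{-1}$. Such pairs genuinely occur in the application, because the $\Omega_i$ are symmetric sector-pairs. In the paper, large $R$ is used only so that the transversality condition survives radial projection, which perturbs directions by $O(R^{-1-\alpha})\ll\ep R^{-1/2}$.

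The missing idea is that no constancy is needed, only a lower bound plus monotonicity. Let $I$ be an interval on which $|\Phi'|\ge c\ep R^{-1/2}$. Then $\Phi'$ has a fixed sign on $I$, so $\Phi$ is monotone there and $I\cap\{|\Phi-R|\le Ch\}$ is an interval. Hence
\[
c\ep R^{-1/2}\,\bigl|I\cap\{|\Phi-R|\le Ch\}\bigr|\le\int_{I\cap\{|\Phi-R|\le Ch\}}|\Phi'(s)|\,ds\le 2Ch.
\]

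What remains is a bound on the number of components of the transverse set. You assert this rather than prove it in Step 1 (``at most two curvilinear parallelograms''), and you rely on it in Step 2. A count $\lesssim D+1$ needs $O(1)$ connected pieces; otherwise a small total length could be spread over many separated pieces, each contributing lattice points.

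The paper gets this bound from explicit formulas after rotating to $\sigma=(d,0)$:
\[
\Phi(s)^2=R^2+d^2+2Rd\cos(s/R), \qquad |\Phi'(s)|^2=\frac{d^2\sin^2(s/R)}{R^2+d^2+2Rd\cos(s/R)}.
\]
Both $\Phi$ and $|\Phi'|$ therefore have $O(1)$ monotonicity intervals. So the good set is $O(1)$ arcs of total length $\lesssim\ep^{-1}R^{1/2-\alpha}$, whose $O(1)$-neighbourhood has area $\lesssim 1+\ep^{-1}R^{1/2-\alpha}$.

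Your dyadic fallback can be made rigorous, but only with exactly this bounded-complexity input. Once you have that input, the dyadic decomposition is unnecessary.
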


\begin{figure}[ht]
\centering

\begin{tikzpicture}[scale=0.8, line cap=round, line join=round]

\def\Rout{3.05}  
\def\Rin{2.75}  
\colorlet{labelblue}{blue!70!black}

\coordinate (O) at (0,0);
\coordinate (C) at (4.2,0);     
\coordinate (P) at (2.1,2.0);

\begin{scope}[even odd rule]
  \clip (O) circle (\Rout) (O) circle (\Rin);
  \path[fill=gray!25] (C) circle (\Rout) (C) circle (\Rin);
\end{scope}

\draw[thick] (O) circle (\Rout);
\draw[thick] (O) circle (\Rin);

\draw[thick] (C) circle (\Rout);
\draw[thick] (C) circle (\Rin);

\fill (O) circle (2pt);
\fill (C) circle (2pt);

\node[fill=white, inner sep=1.3pt] at ($(O)+(-0.30,-0.32)$) {$0$};
\node[fill=white, inner sep=1.3pt] at ($(C)+(0.46,-0.32)$) {$-\sigma$};

\draw[thick] (C) -- (O);
\node[fill=white, inner sep=1.3pt] at ($(O)!0.5!(C)+(0,-0.38)$) {$\sigma$};

\fill (P) circle (2pt);
\node[fill=white, inner sep=1.3pt] at ($(P)+(0,0.62)$) {$\lambda$};

\draw[thick] (O) -- (P);
\draw[thick] (C) -- (P);

\node[text=labelblue, fill=white, inner sep=1.3pt]
  at ($(O)!0.52!(P)+(-0.46,0.28)$) {$\lambda$};

\node[text=labelblue, fill=white, inner sep=1.3pt]
  at ($(C)!0.52!(P)+(0.62,0.28)$) {$\lambda+\sigma$};

\draw (P) ++(223.6:0.52)
  arc[start angle=223.6, end angle=316.4, radius=0.52];

\node[fill=white, inner sep=1.3pt] at ($(P)+(0,-0.82)$) {$\theta$};

\coordinate (Aout) at ($(O)+(142:\Rout)$);
\coordinate (Ain)  at ($(O)+(142:\Rin)$);

\draw[<->] (Ain) -- (Aout);

\node[text=labelblue, fill=white, inner sep=1.3pt]
  at ($(Ain)!0.5!(Aout)+(-0.62,0.34)$) {$h$};

\coordinate (Bout) at ($(C)+(38:\Rout)$);
\coordinate (Bin)  at ($(C)+(38:\Rin)$);

\draw[<->] (Bin) -- (Bout);

\node[text=labelblue, fill=white, inner sep=1.3pt]
  at ($(Bin)!0.5!(Bout)+(0.62,0.34)$) {$h$};

\end{tikzpicture}

\caption{}
\label{fig:lattice-count-1}
\end{figure}

The proof of the lemma will be given later; for now, we explain the geometric intuition behind the estimate. For a fixed $\sigma\in\mathbb Z^2$, we consider $\lambda\in A_{R,\alpha}\cap (A_{R,\alpha}-\sigma)$, i.e., points in the overlap of two annuli of radii $\approx R$ and radial thickness $h=R^{-\alpha}$, one centered at $0$ and the other at $-\sigma$. See Figure \ref{fig:lattice-count-1}. At such a point of intersection, the normals to the two circles are $\lambda$ and $\lambda+\sigma$, and we denote their angle of intersection by
$\theta :=\angle(\lambda,\lambda+\sigma)$. If the intersection is transverse, then the overlap of the two annuli along the circle has arclength on the order of $\frac{h}{\sin\theta}.$ In our application, the poker-chip decomposition forces the transversality condition $|\sin\theta|\gtrsim \ep R^{-1/2}$ and therefore the overlap length is  $\lesssim \ep^{-1}R^{1/2-\alpha}$. Lemma \ref{lemma:lattice-point-count} provides a discrete version of this heuristic.

\begin{proof}[Proof of Theorem \ref{thm:annular-observability}] 
Let $h:=R^{-\alpha}$ for the radial thickness parameter. We first prove the theorem for all sufficiently large $R$. The remaining bounded range of $R$ will be handled at the end by a compactness argument.

We divide the frequency annulus $A_{R,\alpha}$ into symmetric sector-pairs, see Figure \ref{fig:poker-chip}(a). The relevant angular scale is $\Delta\sim R^{-1/2}$. Indeed, on a sector of opening angle $\Delta$, the deviation of the circle of radius $R$ from its tangent line is $O(R\Delta^2)$. Since the annulus has radial thickness $h=R^{-\alpha}\le 1$, choosing $\Delta\sim R^{-1/2}$ ensures that each such sector is contained in a Euclidean strip of width $O(1)$. Thus, a symmetric sector-pair is contained in the union of two parallel strips of uniformly bounded width, see Figure \ref{fig:poker-chip}(b). This allows us to apply Theorem \ref{two-strip-theorem} to each symmetric sector-pair.

\begin{figure}[ht]
\centering
\begin{tikzpicture}[scale=0.9, line cap=round, line join=round]


\def\Rin{2.15}
\def\Rout{2.85}
\pgfmathsetmacro{\Rmid}{0.5*(\Rin+\Rout)}

\def\N{10}
\pgfmathsetmacro{\DeltaAng}{180/\N}
\pgfmathsetmacro{\phiang}{8}
\pgfmathsetmacro{\deltaratio}{0.22}
\pgfmathsetmacro{\deltaAng}{\deltaratio*\DeltaAng}
\pgfmathtruncatemacro{\TwoNminusOne}{2*\N - 1}

\colorlet{deletedfill}{red!32}
\colorlet{keptfill}{blue!15}
\colorlet{keptedge}{blue!65!black}
\colorlet{stripfill}{blue!8}
\colorlet{stripedge}{blue!70!black}

\begin{scope}[shift={(0,0)}]
\node[font=\bfseries] at (-3.25,3.15) {(a)};
  \foreach \m in {0,...,\TwoNminusOne} {
    \pgfmathsetmacro{\a}{\phiang + \m*\DeltaAng}
    \pgfmathsetmacro{\b}{\a + \deltaAng}
    \pgfmathsetmacro{\c}{\a + \DeltaAng}

    \path[fill=deletedfill, draw=none]
      (\a:\Rin) -- (\a:\Rout)
      arc[start angle=\a, end angle=\b, radius=\Rout]
      -- (\b:\Rin)
      arc[start angle=\b, end angle=\a, radius=\Rin]
      -- cycle;

    \path[fill=keptfill, draw=none]
      (\b:\Rin) -- (\b:\Rout)
      arc[start angle=\b, end angle=\c, radius=\Rout]
      -- (\c:\Rin)
      arc[start angle=\c, end angle=\b, radius=\Rin]
      -- cycle;
  }

  \draw[thick] (0,0) circle (\Rout);
  \draw[thick] (0,0) circle (\Rin);

  \fill (0,0) circle (1.2pt);

  
  \pgfmathsetmacro{\angThickStart}{\phiang + 2*\DeltaAng + \deltaAng}
  \pgfmathsetmacro{\angThickEnd}{\phiang + 2*\DeltaAng + \DeltaAng}
  
  \draw[decorate, decoration={brace, mirror, amplitude=5pt, raise=3pt}, thick] 
    (\angThickStart:\Rout) arc[start angle=\angThickStart, end angle=\angThickEnd, radius=\Rout];
  \node[font=\small] at ({0.5*(\angThickStart+\angThickEnd+ 5)}:\Rout+0.7) {$\sim \hspace{-0.71mm}\sqrt{R}$};

  \pgfmathsetmacro{\angThinStart}{\phiang + 4*\DeltaAng}
  \pgfmathsetmacro{\angThinEnd}{\phiang + 4*\DeltaAng + \deltaAng}
  
  \draw[dotted, thick] (\angThinStart:\Rout + 0.05) -- (\angThinStart:\Rout + 0.75);
  \draw[dotted, thick] (\angThinEnd:\Rout + 0.05) -- (\angThinEnd:\Rout + 0.75);
  
  \draw[<-, >=stealth, thick] (\angThinStart:\Rout + 0.5) arc[start angle=\angThinStart, end angle=\angThinStart-6, radius=\Rout + 0.5];
  \draw[<-, >=stealth, thick] (\angThinEnd:\Rout + 0.5) arc[start angle=\angThinEnd, end angle=\angThinEnd+6, radius=\Rout + 0.5];
  
  \node[font=\small] at ({0.5*(\angThinStart+\angThinEnd)}:\Rout + 1.1) {$\epsilon\sqrt{R}$};

\end{scope}

\begin{scope}[shift={(8.4,0)}]
\node[font=\bfseries] at (-3.25,3.15) {(b)};
  \pgfmathsetmacro{\thetac}{32}
  \pgfmathsetmacro{\sectorW}{0.78*\DeltaAng}
  \pgfmathsetmacro{\aone}{\thetac-0.5*\sectorW}
  \pgfmathsetmacro{\bone}{\thetac+0.5*\sectorW}
  \pgfmathsetmacro{\atwo}{\aone+180}
  \pgfmathsetmacro{\btwo}{\bone+180}
  \pgfmathsetmacro{\stripangle}{\thetac+90}

  \begin{scope}[shift={({\thetac}:\Rmid)}, rotate=\stripangle]
    \draw[fill=stripfill, draw=stripedge, line width=0.9pt]
      (-2.05,-0.36) rectangle (2.05,0.36);
      
    \draw[dotted, thick] (2.05, -0.36) -- (2.35, -0.36);
    \draw[dotted, thick] (2.05, 0.36) -- (2.35, 0.36);
    \draw[<->, >=stealth, thick] (2.2, -0.36) -- (2.2, 0.36);
    \node[font=\small] at (2.7, 0) {$O(1)$};

    \node[font=\small] at (0, -0.75) {$\Omega_i$};
  \end{scope}

  \begin{scope}[shift={({\thetac+180}:\Rmid)}, rotate=\stripangle]
    \draw[fill=stripfill, draw=stripedge, line width=0.9pt]
      (-2.05,-0.36) rectangle (2.05,0.36);

    \node[font=\small] at (0, 0.75) {$\Omega_i$};
  \end{scope}

  \path[fill=keptfill, draw=keptedge, line width=0.9pt]
    (\aone:\Rin) -- (\aone:\Rout)
    arc[start angle=\aone, end angle=\bone, radius=\Rout]
    -- (\bone:\Rin)
    arc[start angle=\bone, end angle=\aone, radius=\Rin]
    -- cycle;

  \path[fill=keptfill, draw=keptedge, line width=0.9pt]
    (\atwo:\Rin) -- (\atwo:\Rout)
    arc[start angle=\atwo, end angle=\btwo, radius=\Rout]
    -- (\btwo:\Rin)
    arc[start angle=\btwo, end angle=\atwo, radius=\Rin]
    -- cycle;

  \draw[thick] (0,0) circle (\Rout);
  \draw[thick] (0,0) circle (\Rin);

  \fill (0,0) circle (1.2pt);

\end{scope}

\end{tikzpicture}
\caption{(a) The poker-chip decomposition of the annulus $A_{R,\alpha}$ into thin and thick strips of width about $\ep\sqrt R$ and $\sqrt{R}$ respectively. (b) $\Omega_i$ is covered by two parallel strips of bounded width.}
\label{fig:poker-chip}
\end{figure}

This poker-chip decomposition is made precise now. We work with angles modulo $\pi$, so that each angular interval represents both a sector and its reflected sector. The thin sectors have width about $\ep\sqrt R$ along the annulus, while the thick sectors have width approximately $\sqrt{R}$ along the annulus.

Choose an integer $N\sim \sqrt R$ and set $\Delta:=\frac{\pi}{N}.$ Thus $\Delta\sim R^{-1/2}.$ Let
$\delta:=\ep\Delta\sim \ep R^{-1/2}.$
For $\phi\in[0,\Delta)$, define
$$
S_\phi
:=
\bigcup_{k=0}^{N-1}
[k\Delta+\phi,\ k\Delta+\phi+\delta),
$$
where the union is understood modulo $\pi$. Equivalently, on the full circle $\mathbb R/2\pi\mathbb Z$, each interval is taken together with its reflected copy shifted by $\pi$. Define the corresponding frequency set
$$
E_\phi
:=
\{\xi\in A_{R,\alpha}\cap\mathbb Z^2:
\arg \xi \bmod \pi \in S_\phi\}.
$$
We decompose
$$
f=f_1+f_2
$$
by setting
$\widehat{f_1}(\xi)
:= \widehat f(\xi)\mathbf 1_{E_\phi}(\xi)$, and $f_2:=f-f_1$. A standard pigeonholing argument lets us choose $\phi \in [0, \Delta)$ such that $f_1$ carries negligible $L^2$ mass. Then, we analyze $f_2$. In particular, first observe that
$$
\|f_1\|_2^2
= \sum_{\xi\in\mathbb Z^2}
|\widehat{f_1}(\xi)|^2 
= \sum_{\xi\in A_{R,\alpha}\cap\mathbb Z^2}
|\widehat f(\xi)|^2\, \mathbf 1_{E_\phi}(\xi)
= \sum_{\xi\in A_{R,\alpha}\cap\mathbb Z^2}
|\widehat f(\xi)|^2\, \mathbf 1_{S_\phi}(\arg \xi\bmod \pi).$$

For each fixed $\theta\in \mathbb R/\pi\mathbb Z$, we have $\frac1\Delta \int_0^\Delta
\mathbf 1_{S_\phi}(\theta)\,d\phi = \frac{\delta}{\Delta} = \ep,$ so averaging the above expression over $\phi \in [0, \Delta)$ gives
$$
\frac1\Delta \int_0^\Delta \|f_1\|_2^2\, d\phi
=
\ep
\sum_{\xi\in A_{R,\alpha}\cap\mathbb Z^2}
|\widehat f(\xi)|^2
=
\ep\,\|f\|_2^2.
$$
Thus, we can find some $\phi\in[0,\Delta)$ such that
\begin{equation}
    \label{eq:pigeonhole-small}
    \|f_1\|_2^2 \le \ep\|f\|_2^2.
\end{equation}
We fix such a $\phi$ from now on. The complement of $S_\phi$ in $\mathbb R/\pi\mathbb Z$ is a union of angular intervals of length comparable to $\Delta$, separated from each other by gaps of angular size $\delta=\ep\Delta$. Denote these intervals by $I_i$, where $1\le i\le N$. Define the corresponding symmetric sector-pairs
$$
\Omega_i
:=
\{\xi\in A_{R,\alpha}: \arg \xi \bmod \pi \in I_i\}.
$$
Equivalently, if $I_i$ is represented as an interval in $\mathbb R/2\pi\mathbb Z$, then $\Omega_i$ is the union of the strip over $I_i$ and its reflected strip over $I_i+\pi$. Set
$\widehat{f_{2,i}}(\xi)
:= \widehat{f_2}(\xi)\mathbf 1_{\Omega_i}(\xi).$ Then,
$$
f_2=\sum_{i=1}^N f_{2,i},
$$
and the Fourier supports of the functions $f_{2,i}$ are disjoint. By the observability inequality of Theorem \ref{two-strip-theorem}, we have
\begin{equation}
    \label{eq:f_2-disjoint}
    \|f_2\|_2^2 = \sum_{i=1}^N \|f_{2,i}\|_2^2 \le C_{\mathrm{str}} \sum_{i=1}^N \|f_{2,i} \sqrt{\gamma}\|_2^2.
\end{equation}
It remains to show
$$
\sum_{i=1}^N \|f_{2,i}\sqrt{\gamma}\|_2^2
\lesssim
\left\|\sum_{i=1}^N f_{2,i}\sqrt{\gamma}\right\|_2^2.
$$
This is where the Sobolev regularity of $\gamma$ plays a role. Note that if $\lambda\in \Omega_i$ and $\mu\in \Omega_j$ for $i \ne j$, and $\theta=\angle(\lambda,\mu)\in[0,\pi]$, then $$\min(\theta,\pi-\theta)
\gtrsim \ep R^{-1/2}.$$

Set $t:=\min(\theta,\pi-\theta)$. Since $t\in[0,\pi/2]$ and
$\sin t\gtrsim t$, we have
$$
|\sin\theta|
=
\sin(\min(\theta,\pi-\theta))
=
\sin t
\gtrsim t
\gtrsim \ep R^{-1/2}.
$$
Now, we use the cross product expressions $ |\lambda\times\mu|
= |\lambda|\,|\mu|\,|\sin\theta|$ and $\lambda\times\mu
= \lambda\times(\mu-\lambda)$. We get $$ |\lambda|\,|\mu|\,|\sin\theta|
= |\lambda\times(\mu-\lambda)| \le |\lambda|\,|\mu-\lambda|,$$
which simplifies to $$|\mu-\lambda|
\ge |\mu|\,|\sin\theta| \gtrsim R\cdot \ep R^{-1/2}
= \ep\sqrt R.$$

We have
$$\|f_2\sqrt{\gamma}\|_2^2
= \int_{\mathbb T^2} \left|\sum_i f_{2,i}(x)\right|^2
\gamma(x)\,dx = \sum_i \int_{\mathbb T^2} |f_{2,i}(x)|^2\gamma(x)\,dx
+ \sum_{i\neq j} \int_{\mathbb T^2} f_{2,i}(x)\overline{f_{2,j}(x)}\gamma(x)\,dx,$$
so that
\begin{equation}
    \label{eq:off-diag-small}
    \sum_i \|f_{2,i}\sqrt{\gamma}\|_2^2
= \|f_2\sqrt{\gamma}\|_2^2
- \sum_{i\neq j}
\int_{\mathbb T^2}
f_{2,i}(x)\overline{f_{2,j}(x)}\gamma(x)\,dx.
\end{equation}
We will show that the off-diagonal contribution is small. Using Fourier series, write
$$
f_{2,i}(x)
=
\sum_{\lambda\in \Omega_i\cap\mathbb Z^2}
\widehat{f_{2,i}}(\lambda)\, e^{2\pi i\lambda\cdot x}.
$$
Then
$$
\int_{\mathbb T^2}
f_{2,i}(x)\overline{f_{2,j}(x)}\gamma(x)\,dx
=
\sum_{\lambda\in \Omega_i}
\sum_{\mu\in \Omega_j}
\widehat{f_{2,i}}(\lambda)
\overline{\widehat{f_{2,j}}(\mu)}
\widehat\gamma(\mu-\lambda).
$$
Therefore, using the separation estimate $|\lambda-\mu|\gtrsim \ep\sqrt R$ for $i\neq j$,
$$
\left|
\sum_{i\neq j}
\int_{\mathbb T^2}
f_{2,i}\overline{f_{2,j}}\gamma
\right|
\le
\sum_{|\sigma|\gtrsim \ep\sqrt R}
|\widehat\gamma(\sigma)|
\sum_{\substack{i\neq j,\; \lambda\in \Omega_i,\; \mu\in \Omega_j \\ \mu-\lambda=\sigma}}
|\widehat{f_{2,i}}(\lambda)|
|\widehat{f_{2,j}}(\mu)|.
$$
For each $\sigma\in\mathbb Z^2$, define
$$
B(\sigma)
:=
\sum_{\substack{i\neq j,\ \lambda\in \Omega_i,\ \mu\in \Omega_j\\ \mu-\lambda=\sigma}}
|\widehat{f_{2,i}}(\lambda)|
|\widehat{f_{2,j}}(\mu)|.
$$
Then
$$
\left|
\sum_{i\neq j}
\int_{\mathbb T^2}
f_{2,i}\overline{f_{2,j}}\gamma
\right|
\le
\sum_{|\sigma|\gtrsim \ep\sqrt R}
|\widehat\gamma(\sigma)|B(\sigma).
$$
Applying Cauchy--Schwarz in $\sigma$, we get
$$
\left|
\sum_{i\neq j}
\int_{\mathbb T^2}
f_{2,i}\overline{f_{2,j}}\gamma
\right|
\le
\left(
\sum_{|\sigma|\gtrsim \ep\sqrt R}
|\widehat\gamma(\sigma)|^2
\right)^{1/2}
\left(
\sum_{|\sigma|\gtrsim \ep\sqrt R}
B(\sigma)^2
\right)^{1/2}.
$$

We now estimate the second factor. For fixed $\sigma\in\mathbb Z^2$, define
$$
N(\sigma)
:=
\#\left\{
(i,j,\lambda,\mu):
i\neq j,\ 
\lambda\in \Omega_i\cap\mathbb Z^2,\ 
\mu\in \Omega_j\cap\mathbb Z^2,\ 
\mu-\lambda=\sigma
\right\}.
$$
By the Cauchy--Schwarz inequality,
$$
B(\sigma)^2
\le
N(\sigma)
\sum_{\substack{i\neq j,\ \lambda\in \Omega_i,\ \mu\in \Omega_j\\ \mu-\lambda=\sigma}}
|\widehat{f_{2,i}}(\lambda)|^2
|\widehat{f_{2,j}}(\mu)|^2.
$$

Since $\mu=\lambda+\sigma$ is determined by $\lambda$, and since we are only counting pairs coming from distinct strip-pairs, we have $N(\sigma)
\le \#\Lambda_\sigma,$
where
$$\Lambda_\sigma := \left\{ \lambda\in A_{R,\alpha} \cap \mathbb Z^2: \lambda+\sigma\in A_{R,\alpha}  \,\text{ and }\, |\sin\angle(\lambda,\lambda+\sigma)|\gtrsim \ep R^{-1/2} \right\}.$$
By Lemma \ref{lemma:lattice-point-count}, $\#\Lambda_\sigma \lesssim \ep^{-1}R^{1/2-\alpha}$. This leads to 
$$
B(\sigma)^2 \lesssim \ep^{-1}R^{1/2-\alpha}
\sum_{\substack{i\neq j,\ \lambda\in \Omega_i,\ \mu\in \Omega_j\\ \mu-\lambda=\sigma}}
|\widehat{f_{2,i}}(\lambda)|^2
|\widehat{f_{2,j}}(\mu)|^2.
$$
Summing in $\sigma$, we get
$$
\sum_{|\sigma|\gtrsim \ep\sqrt R}
B(\sigma)^2
\lesssim
\ep^{-1}R^{1/2-\alpha}
\sum_{|\sigma|\gtrsim \ep\sqrt R}
\sum_{\substack{i\neq j,\ \lambda\in \Omega_i,\ \mu\in \Omega_j\\ \mu-\lambda=\sigma}}
|\widehat{f_{2,i}}(\lambda)|^2
|\widehat{f_{2,j}}(\mu)|^2.
$$
Clearly,
$$
\sum_{|\sigma|\gtrsim \ep\sqrt R} \sum_{\substack{i\neq j,\ \lambda\in \Omega_i,\ \mu\in \Omega_j\\ \mu-\lambda=\sigma}}
|\widehat{f_{2,i}}(\lambda)|^2 |\widehat{f_{2,j}}(\mu)|^2
\le \sum_{i,\lambda} |\widehat{f_{2,i}}(\lambda)|^2
\sum_{j,\mu} |\widehat{f_{2,j}}(\mu)|^2 = \|f_2\|_2^4.
$$
Therefore,
$$\sum_{|\sigma|\gtrsim \ep\sqrt R}
B(\sigma)^2 \lesssim
\ep^{-1}R^{1/2-\alpha}\, \|f_2\|_2^4.
$$
Substituting this into the previous Cauchy--Schwarz estimate for the off-diagonal term, we obtain
$$
\left|
\sum_{i\neq j}
\int_{\mathbb T^2}
f_{2,i}\overline{f_{2,j}}\gamma
\right|
\lesssim
\ep^{-1/2}R^{1/4-\alpha/2}
\left(
\sum_{|\sigma|\gtrsim \ep\sqrt R}
|\widehat\gamma(\sigma)|^2
\right)^{1/2}
\|f_2\|_2^2.
$$

We now use Sobolev regularity. If $\gamma\in H^s(\mathbb T^2)$, then
$$
\left( \sum_{|\sigma|\gtrsim \ep\sqrt R}
|\widehat\gamma(\sigma)|^2 \right)^{1/2}
\lesssim \ep^{-s}R^{-s/2}
\left(\sum_{|\sigma|\gtrsim \ep\sqrt R}
|\widehat\gamma(\sigma)|^2 |\sigma|^{2s}
\right)^{1/2},$$
giving
$$
\left| \sum_{i\neq j} \int_{\mathbb T^2} f_{2,i}\overline{f_{2,j}}\gamma
\right| \lesssim \ep^{-s-1/2} R^{1/4-\alpha/2-s/2} \left(\sum_{|\sigma|\gtrsim \ep\sqrt R}
|\widehat\gamma(\sigma)|^2 |\sigma|^{2s} \right)^{1/2} \|f_2\|_2^2.
$$
If $s>\frac12-\alpha$, then the power of $R$ already decays. At the endpoint
$s=\frac12-\alpha$, the power of $R$ vanishes, but the Sobolev tail still tends to zero as $R\to\infty$, since $\gamma\in H^s(\mathbb T^2)$. Thus the off-diagonal term can be made arbitrarily small for all sufficiently large $R$. In particular, 
$$
\left| \sum_{i\neq j} \int_{\mathbb T^2} f_{2,i}\overline{f_{2,j}}\gamma
\right| \le \frac{1}{2C_{\mathrm{str}}} \|f_2\|_2^2,
$$
for all $R \ge R_0$ for some $R_0 > 0$. Therefore, $$\sum_i \|f_{2,i}\sqrt{\gamma}\|_2^2 \le \|f_2\sqrt{\gamma}\|_2^2 + \frac{1}{2C_{\mathrm{str}}} \|f_2\|_2^2,$$
using (\ref{eq:off-diag-small}). Combining this with (\ref{eq:f_2-disjoint}), we get
$$\|f_2\|_2^2 \le 2C_{\mathrm{str}}\, \|f_2\sqrt{\gamma}\|_2^2,$$
and writing $f_2 = f - f_1$, we can estimate the right side as 
$$\|f_2\sqrt{\gamma}\|_2^2 \le 2\|f\sqrt{\gamma}\|_2^2 + 2\|f_1\sqrt{\gamma}\|_2^2 \le 2\|f\sqrt{\gamma}\|_2^2 + 2\ep \|\gamma\|_\infty\|f\|_2^2,$$
by (\ref{eq:pigeonhole-small}). Finally,
$$\|f\|_2^2 = \|f_1\|_2^2 + \|f_2\|_2^2 \le \ep \|f\|_2^2 + 4C_{\mathrm{str}}\|f\sqrt{\gamma}\|_2^2
+ 4C_{\mathrm{str}}\,\ep\|\gamma\|_\infty\|f\|_2^2,$$
and choose $\ep > 0$ small enough to conclude $$\|f\|_2 \lesssim \|f\sqrt{\gamma}\|_2,$$
for every $f \in L^2(\T^2)$ satisfying $\operatorname{supp}\widehat f\subset A_{R,\alpha}\cap\mathbb Z^2$.

We now handle the finite piece $1\le R\le R_0$. In this range, the relevant functions lie in a fixed finite-dimensional subspace $\mathcal V \subset L^2(\mathbb T^2)$. Suppose the estimate failed on this bounded range. Then there would exist $f_n\in \mathcal{V}$ such that $\|f_n\|_2=1$ for all $n \ge 1$ but $\|f_n\sqrt{\gamma}\|_2\to 0.$
After passing to a subsequence, $f_n\to f$ in $L^2$, with $\|f\|_2=1$. Since $\gamma\in L^\infty$, we also have $f_n\sqrt{\gamma}\to f\sqrt{\gamma}$ in $L^2$. Hence $f\sqrt{\gamma}=0$, so $f=0$ a.e. on $\{\gamma>0\}$.

GGCC implies that $\{\gamma>0\}$ has positive measure. Therefore, the trigonometric polynomial $f$ vanishes on a set of positive measure, forcing $f\equiv0$, which contradicts $\|f\|_2=1$. This proves the estimate for $1\le R\le R_0$ and completes the proof.

\end{proof}

\section{A Lattice Count for Transversely Overlapping Annuli}

We now prove a lattice point count for transversely overlapping annuli, as described in Lemma \ref{lemma:lattice-point-count}. The statement is repeated below for convenience.
\begin{lemma*}
    Let $0\le \alpha\le \frac{1}{2}$ and fix $0<\ep<1$. For every $\sigma\in\mathbb Z^2$, we have the estimate
$$\#\left\{ \lambda\in A_{R,\alpha} \cap \mathbb Z^2: \lambda+\sigma\in A_{R,\alpha} \,\text{ and }\, |\sin\angle(\lambda,\lambda+\sigma)| \gtrsim \ep R^{-1/2} \right\}\lesssim \ep^{-1}R^{1/2-\alpha},$$
for all sufficiently large $R$, with constants in $\lesssim$ independent of $R$ and $\sigma$.
\end{lemma*}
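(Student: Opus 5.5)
The plan is to convert the two annulus conditions into a strip condition, and then to use transversality to bound the arclength of the region that a thin strip cuts out of the annulus. Write $h:=R^{-\alpha}\le 1$. If $\lambda$ and $\lambda+\sigma$ both lie in $A_{R,\alpha}$, then $|\lambda|^2$ and $|\lambda+\sigma|^2$ both lie in $[R^2,(R+h)^2]$, an interval of length $\lesssim Rh$. Subtracting gives
$$2\lambda\cdot\sigma+|\sigma|^2\in\bigl[-3Rh,\,3Rh\bigr],$$
so $\lambda\cdot\sigma$ lies in a fixed interval $J_\sigma$ of length $O(Rh)$ that depends only on $\sigma$. We may also assume $|\sigma|\gtrsim \ep R^{1/2}$. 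Indeed, $|\lambda|\,|\lambda+\sigma|\,|\sin\angle(\lambda,\lambda+\sigma)|=|\lambda\times\sigma|\le|\lambda|\,|\sigma|$ gives $|\sigma|\gtrsim R|\sin\theta|\gtrsim \ep R^{1/2}$, and otherwise the set is empty.

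Next, parametrize $\lambda=r e(\varphi)$ with $r\in[R,R+h]$ and $e(\varphi)=(\cos\varphi,\sin\varphi)$. The function $\varphi\mapsto \lambda\cdot\sigma=r|\sigma|\cos(\varphi-\varphi_\sigma)$ has $\varphi$-derivative of modulus $r|\sigma||\sin\angle(\lambda,\sigma)|=|\lambda\times\sigma|$. On the set we are counting,
$$|\lambda\times\sigma|=|\lambda|\,|\lambda+\sigma|\,|\sin\theta|\gtrsim R^2\cdot\ep R^{-1/2}=\ep R^{3/2}.$$
Define $U\subset\mathbb R/2\pi\mathbb Z$ to be the set of $\varphi$ such that some $r\in[R,R+h]$ gives $re(\varphi)\cdot\sigma\in J_\sigma$ and the derivative bound $|\sin(\varphi-\varphi_\sigma)|\gtrsim \ep R^{1/2}/|\sigma|$ holds. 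Replacing $r$ by $R$ moves $\lambda\cdot\sigma$ by at most $h|\sigma|$, so we enlarge $J_\sigma$ to length $O(Rh+h|\sigma|)=O(Rh)$; this uses $|\sigma|\le 2R+2h$, which holds automatically. Since $\cos$ is monotone on each of the two half-periods, the set where the derivative bound holds splits into at most four arcs of monotonicity. On each such arc, the mean value theorem shows that the preimage of an interval of length $O(Rh)$ has angular length
$$\lesssim \frac{Rh}{R|\sigma|\cdot \ep R^{1/2}/|\sigma|}=\frac{h}{\ep R^{1/2}}.$$
This corresponds to arclength $\lesssim R\cdot h\ep^{-1}R^{-1/2}=\ep^{-1}R^{1/2-\alpha}$. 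This matches the heuristic $h/\sin\theta$ stated before the proof.

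Consequently, the counted lattice points lie in at most four curved boxes of radial width $h\le1$ and arclength $\ell\lesssim \ep^{-1}R^{1/2-\alpha}$, where $\ell\gtrsim 1$ because $\alpha\le\frac12$ and $\ep<1$. Each box is covered by $O(\ell+1)=O(\ell)$ unit squares, since the curvature $1/R$ is negligible at this scale. Each unit square contains $O(1)$ points of $\mathbb Z^2$, which yields the bound $\lesssim \ep^{-1}R^{1/2-\alpha}$ uniformly in $\sigma$.

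I expect the main obstacle to be the bookkeeping in the second step. The transversality hypothesis is stated in terms of $\angle(\lambda,\lambda+\sigma)$, whereas the derivative involves $\angle(\lambda,\sigma)$, and the radius $r$ varies. To handle this, I will first restrict to the $\varphi$ where $|\sin(\varphi-\varphi_\sigma)|\ge c\,\ep R^{1/2}/|\sigma|$ for a suitable small $c$. This set contains the angles of all the counted $\lambda$ by the cross product identity above. I will then apply the monotonicity argument there, so that the implicit constants never depend on $\sigma$ or $R$.
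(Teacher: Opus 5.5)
Your proposal is correct and is essentially the paper's proof. You project to $RS^1$ and use a function that is monotone on $O(1)$ arcs; your $\varphi\mapsto Re(\varphi)\cdot\sigma$ is, up to an affine change, the paper's $F=\Phi^2=R^2+|\sigma|^2+2y\cdot\sigma$. You bound its derivative below by transversality, bound the arclength by range over derivative, and cover by unit squares, exactly as the paper does. Your one real simplification is to express transversality through $|\lambda\times\sigma|$, i.e.\ through $\angle(\lambda,\sigma)$, which does not change under radial projection. This lets you skip the paper's Step 1 stability estimate and its monotonicity analysis of $|\Phi'|$, and your bound holds for all $R\ge 1$, not only for sufficiently large $R$.
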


\begin{proof}[Proof of Lemma \ref{lemma:lattice-point-count}]
    Let $h:=R^{-\alpha}$ and $$ E_\sigma := \left\{x\in A_{R,\alpha}: x+\sigma\in A_{R,\alpha} \,\text{ and }\, |\sin\angle(x,x+\sigma)|\gtrsim \ep R^{-1/2} \right\}.$$
Then, we need to estimate $\# (E_\sigma\cap\mathbb Z^2)$.

\medskip
\noindent \textbf{Step 1: Projection to $RS^1$.} 

Let $\pi(x):=R\frac{x}{|x|}$ be the radial projection onto the circle $RS^1$, and consider the set $\Gamma_\sigma:=\pi(E_\sigma)$. We justify that the transversality condition $|\sin\angle(x,x+\sigma)|\gtrsim \ep R^{-1/2}$ is stable under this projection. Let $y :=\pi(x)$. Then,
$$ |x-y| = \left|x-R\frac{x}{|x|}\right|
= \bigl||x|-R\bigr| \le h,$$ 
since $x\in A_{R,\alpha}$. Set
$u:=x+\sigma$ and $v:=y+\sigma$. Then, $|u-v| \le h.$ Moreover, $x+\sigma\in A_{R,\alpha}$, so $|u|\sim R$, and by the triangle inequality, we have
$$ \bigl||v|-R\bigr| \le \bigl||v|-|u|\bigr|+\bigl||u|-R\bigr|
\le |v-u|+h \le  2h.$$
Hence also $|v|\approx R$. We now compare the directions of $u$ and $v$. We will show that
\begin{equation}
    \label{eq:proj-h/R}
    \left| \frac{u}{|u|} - \frac{v}{|v|} \right| \lesssim \frac{h}{R}.
\end{equation}

\begin{figure}[ht]
\centering
\begin{tikzpicture}[
    scale=1,
    line cap=round,
    line join=round,
    >=Latex,
    every node/.style={font=\small}
]

\def\R{3.15}       
\def\ang{57}       
\def\dr{0.46}      

\colorlet{projcol}{blue!70!black}
\colorlet{transcol}{green!55!black}
\colorlet{errcol}{red!75!black}

\coordinate (O)  at (0,0);
\coordinate (y)  at (\ang:\R);
\coordinate (x)  at (\ang:{\R+\dr});

\coordinate (sig) at (1.95,-0.34);

\coordinate (yp) at ($(y)+(sig)$);   
\coordinate (xp) at ($(x)+(sig)$);   

\coordinate (rayYp) at ($(O)!1.08!(yp)$);
\coordinate (rayXp) at ($(O)!1.08!(xp)$);

\draw[thick] (O) circle (\R);
\fill (O) circle (1.4pt);
\node[below left] at (O) {$0$};

\node[fill=white, inner sep=1pt] at ($(146:\R)+(-0.4,0.15)$) {$RS^1$};

\draw[thick] (O) -- (x);

\draw[->, thick] (x) -- (xp);
\draw[->, thick] (y) -- (yp);

\node[fill=white, inner sep=1pt] at ($(y)!0.57!(yp)+(-0.15,-0.18)$) {$\sigma$};

\draw[dashed, thick, transcol] (O) -- (rayYp);
\draw[dashed, thick, projcol]  (O) -- (rayXp);

\fill (x)  circle (1.6pt);
\fill (y)  circle (1.6pt);
\fill (xp) circle (1.6pt);
\fill (yp) circle (1.6pt);

\node[fill=white, inner sep=1pt] at ($(x)+(0.15,0.22)$) {$x$};
\node[fill=white, inner sep=1pt] at ($(y)+(-0.86,-0.15)$) {$y=\pi(x)$};

\node[fill=white, inner sep=1pt] at ($(xp)+(0.46,0.10)$) {$x+\sigma$};
\node[fill=white, inner sep=1pt] at ($(yp)+(0.56,-0.18)$) {$y+\sigma$};

\draw[<->, projcol, thin]
    ($(y)+(0.07,-0.03)$) -- ($(x)+(0.07,-0.03)$);
\node[text=projcol, fill=white, inner sep=1pt]
    at ($(y)!0.5!(x)+(0.35,-0.05)$) {$h$};

\draw[<->, projcol, thin]
    ($(yp)+(0.10,-0.03)$) -- ($(xp)+(0.10,-0.03)$);
\node[text=projcol, fill=white, inner sep=1pt]
    at ($(yp)!0.5!(xp)+(0.35,-0.05)$) {$h$};

\pgfmathanglebetweenpoints{\pgfpointanchor{O}{center}}{\pgfpointanchor{y}{center}}
\let\angbase\pgfmathresult

\pgfmathanglebetweenpoints{\pgfpointanchor{O}{center}}{\pgfpointanchor{yp}{center}}
\let\angyp\pgfmathresult

\pgfmathanglebetweenpoints{\pgfpointanchor{O}{center}}{\pgfpointanchor{xp}{center}}
\let\angxp\pgfmathresult

\draw[transcol, thick]
  ($(O)+(\angbase:0.92)$)
  arc[start angle=\angbase, end angle=\angyp, radius=0.92];

\node[text=transcol, fill=white, inner sep=0.8pt]
  at ($(O)+({0.5*(\angbase+\angyp)}:1.08)$) {$\theta_y$};

\draw[projcol, thick]
  ($(O)+(\angbase:1.38)$)
  arc[start angle=\angbase, end angle=\angxp, radius=1.38];

\node[text=projcol, fill=white, inner sep=0.8pt]
  at ($(O)+({0.5*(\angbase+\angxp)}:1.57)$) {$\theta_x$};

\draw[errcol, thick]
  ($(O)+(\angyp:1.95)$)
  arc[start angle=\angyp, end angle=\angxp, radius=1.95];

\node[text=errcol, fill=white, inner sep=0.8pt]
  at ($(O)+({0.5*(\angyp+\angxp)}:2.45)+(0.18,-0.5)$) {$O(h/R)$};

\end{tikzpicture}
\caption{}
\label{fig:projection-stability}
\end{figure}

Indeed,
$$ \frac{u}{|u|} - \frac{v}{|v|}
= \frac{u-v}{|u|} + v\left(\frac1{|u|}-\frac1{|v|}\right),
$$
and taking absolute values, we get
$$
\left| \frac{u}{|u|} - \frac{v}{|v|} \right| \le \frac{|u-v|}{|u|} +
|v| \frac{\bigl||v|-|u|\bigr|}{|u||v|}  \le \frac{|u-v|}{|u|} + \frac{|u-v|}{|u|} \lesssim \frac{|u-v|}{R},$$
by the reverse-triangle inequality. As $|u-v|\le h$, we obtain (\ref{eq:proj-h/R}). In other words, replacing $x$ by its radial projection $y$ changes the direction of $x+\sigma$ by at most $O(h/R)=O(R^{-1-\alpha})$; see Figure \ref{fig:projection-stability}. On the other hand, for fixed $\ep>0$ and sufficiently large $R$, we have $R^{-1-\alpha} \lesssim \ep R^{-1/2}.$ Thus, if $$|\sin\angle(x,x+\sigma)| \gtrsim \ep R^{-1/2},$$ then, with a different implicit constant, we have $$|\sin\angle(y,y+\sigma)|
\gtrsim \ep R^{-1/2}.$$

\medskip
\noindent \textbf{Step 2: How long is the projection?}

We now estimate $\mathcal H^1(\Gamma_\sigma)$. First, parametrize the circle $RS^1$ by arclength as follows. Define $$ y(s):= R\left(\cos\frac{s}{R},\sin\frac{s}{R}\right),$$
where $0\le s < 2\pi R$. Then, $|y'(s)|=1.$ Also define
$$ \Phi(s):=|y(s)+\sigma|,$$
and note that
$$ \Phi'(s) = \frac{y(s)+\sigma}{|y(s)+\sigma|}\cdot y'(s).$$
Since $y'(s) \perp y(s)$, we can rewrite the above expression as
$$\Phi'(s) = \sin \angle (y(s), y(s) + \sigma),$$ and we have 
$$
|\Phi'(s)|
=
|\sin\angle(y(s),y(s)+\sigma)|
\gtrsim
\ep R^{-1/2},
$$
for every $s$ such that $y(s)\in \Gamma_\sigma$. Moreover, $y(s)\in \Gamma_\sigma$ also implies $|\Phi(s)-R|\lesssim h.$ 

In order to estimate the length of $\Gamma_\sigma$, it will be useful to observe that $\Phi$ has only $O(1)$ monotonicity intervals on the circle. After a rotation, we may assume $\sigma=(d,0),$ where $d:=|\sigma|$. We have
$$ \Phi(s)^2 = |y(s)+\sigma|^2 = R^2+d^2+2Rd\cos\frac{s}{R}.$$
Set $F(s):=\Phi(s)^2$. Then,
$$ F'(s) = -2d\sin\frac{s}{R},$$
and $F'(s)=0$ only when $s = 0$ or $s = \pi R$.
Hence, $F=\Phi^2$ has only $O(1)$ critical points and therefore only $O(1)$ monotonicity intervals. Since on the set under consideration we have $\Phi(s)\sim R$, $\Phi$ is nonzero there, and so $\Phi=\sqrt F$ has the same monotonicity intervals as $F$.

\begin{figure}[ht]
\centering
\begin{tikzpicture}[
    scale=1.0,
    line cap=round,
    line join=round,
    >=Latex,
    every node/.style={font=\small}
]

\def\xmin{0}
\def\xmax{11}
\def\ymin{0}
\def\ymax{5.8}

\def\Rlow{2.35}
\def\Rmid{2.75}
\def\Rhigh{3.15}

\def\xJleft{1.55}
\def\xJright{9.20}

\def\xAleft{3.35}
\def\xAright{7.70}

\def\xEleft{4.55}
\def\xEright{6.55}

\colorlet{bandfill}{blue!8}
\colorlet{bandline}{blue!65!black}
\colorlet{Acol}{blue!75!black}
\colorlet{Ecol}{red!75!black}

\draw[->] (\xmin,\ymin) -- (\xmax,\ymin) node[below right] {$s$};
\draw[->] (\xmin,\ymin) -- (\xmin,\ymax) node[left] {$\Phi(s)$};

\fill[gray!10] (\xJleft,\ymin) rectangle (\xJright,\ymax-0.35);
\draw[dashed, gray!55] (\xJleft,\ymin) -- (\xJleft,\ymax-0.35);
\draw[dashed, gray!55] (\xJright,\ymin) -- (\xJright,\ymax-0.35);

\node[gray!60!black] at ({0.5*(\xJleft+\xJright)},5.25)
{$\Phi$ monotone on $J$};

\draw[decorate,decoration={brace,mirror,amplitude=4pt}, gray!70]
  (\xJleft,-0.12) -- (\xJright,-0.12)
  node[midway,below=6pt,black] {$J$};

\fill[bandfill] (\xmin,\Rlow) rectangle (\xmax,\Rhigh);

\draw[dashed, bandline] (\xmin,\Rhigh) -- (\xmax,\Rhigh);
\draw[dotted, bandline] (\xmin,\Rmid) -- (\xmax,\Rmid);
\draw[dashed, bandline] (\xmin,\Rlow) -- (\xmax,\Rlow);

\node[left, bandline] at (\xmin,\Rhigh) {$R+Ch$};
\node[left, bandline] at (\xmin,\Rmid) {$R$};
\node[left, bandline] at (\xmin,\Rlow) {$R-Ch$};

\draw[<->, bandline] (10.9,\Rlow) -- (10.9,\Rhigh);
\node[right, bandline] at (11.00,\Rmid) {$2Ch$};

\draw[very thick]
  (0.7,0.95)
  .. controls (1.8,1.15) and (2.65,1.75) ..
  (\xAleft,\Rlow)
  .. controls (4.25,2.62) and (5.10,2.72) ..
  (5.75,\Rmid)
  .. controls (6.55,2.88) and (7.10,3.00) ..
  (\xAright,\Rhigh)
  .. controls (8.35,3.65) and (9.10,4.65) ..
  (10.25,5.25);

\node[above right] at (9.35,4.95) {$\Phi$};

\draw[dashed, Acol] (\xAleft,\ymin) -- (\xAleft,\Rlow);
\draw[dashed, Acol] (\xAright,\ymin) -- (\xAright,\Rhigh);

\draw[line width=2pt, Acol] (\xAleft,\ymin) -- (\xAright,\ymin);
\node[Acol, below=8pt] at ({0.5*(\xAleft+\xAright) + 1.6},\ymin + 0.9) {$A_J$};

\draw[dashed, Ecol] (\xEleft,\ymin) -- (\xEleft,2.62);
\draw[dashed, Ecol] (\xEright,\ymin) -- (\xEright,2.88);

\draw[line width=3pt, Ecol] (\xEleft,0.18) -- (\xEright,0.18);
\node[Ecol, below=18pt] at ({0.5*(\xEleft+\xEright)},\ymin+1.45) {$E_J$};

\draw[->, thick] (5.20,4.25) -- (5.72,2.88);

\node[align=center, fill=white, inner sep=2pt] at (6.85,4.45)
{$|\Phi'(s)|\ge c\ep R^{-1/2}$\\
on $E_J$};

\end{tikzpicture}

\caption{
The length estimate on one monotonicity interval of $\Phi$. The condition
$|\Phi(s)-R|\le Ch$ confines $\Phi(s)$ to an interval of size $O(h)$, while
on the transverse set $E_J \subset A_J$ we have $|\Phi'(s)|\ge c\ep R^{-1/2}$.
Thus $|E_J|\lesssim h/(\ep R^{-1/2})=\ep^{-1}R^{1/2-\alpha}$.
}
\label{fig:phi-band}
\end{figure}

Fix one such monotonicity interval $J$ of $\Phi$ (see Figure \ref{fig:phi-band}). We treat the case
where $\Phi$ is increasing on $J$; the decreasing case is identical. 
Define
$$A_J := \left\{s\in J: |\Phi(s)-R|\le Ch
\right\},$$
and the transverse subset 
$$ E_J
:= \left\{ s\in J: |\Phi(s)-R|\le Ch \ \text{and}\ |\Phi'(s)|\ge c\ep R^{-1/2} \right\}.$$
Since $\Phi$ is monotone on $J$, the set $A_J$ is an interval. Write $A_J=[a,b]$. Then, $$\int_{E_J}|\Phi'(s)|\,ds\le  \int_a^b |\Phi'(s)|\,ds = \int_a^b \Phi'(s)\,ds = \Phi(b)-\Phi(a) \le 2Ch.$$
On the other hand,
$$ \int_{E_J}|\Phi'(s)|\,ds \ge c\ep R^{-1/2}\,|E_J|,$$
by definition of $E_J$. Combining the two estimates, we have $|E_J| \lesssim \ep^{-1}R^{1/2-\alpha}.$ Therefore, on each monotonicity interval of $\Phi$, the set of $s$ such that $|\Phi(s)-R|\lesssim h$ and $|\Phi'(s)|\gtrsim \ep R^{-1/2}$, has length at most $\lesssim \ep^{-1}R^{1/2-\alpha}.$ Since there are only $O(1)$ monotonicity intervals, we obtain
$$
\mathcal H^1(\Gamma_\sigma)
\lesssim
\ep^{-1}R^{1/2-\alpha}.
$$

\medskip
\noindent \textbf{Step 3: From projected arcs to the lattice point count.}

We now pass from the projected length to a lattice point count. Note that a length estimate alone is not sufficient, because a set of small $\h^1$-measure could have many separated components. We will show
that this cannot happen, by showing that the projected set is contained in a union of $O(1)$ arcs on the circle $RS^1$.

Recall that the radial projection argument shows that $\pi(E_\sigma)$ is contained in
$y(G_\sigma)$, where
$$
G_\sigma
:= \left\{ s\in[0,2\pi R): |\Phi(s)-R|\le Ch \ \text{and}\
|\Phi'(s)|\ge c\ep R^{-1/2}
\right\}.
$$
We claim that $G_\sigma$ is a union of $O(1)$ intervals. As shown above, $\Phi$ has only $O(1)$ monotonicity intervals on $[0,2\pi R)$. Moreover, the explicit formula for $\Phi'$ shows that $|\Phi'|$ also has only $O(1)$ monotonicity intervals. Indeed, after rotating so that $\sigma=(d,0)$, one has
$$
|\Phi'(s)|^2
=
\frac{d^2\sin^2(s/R)}
{R^2+d^2+2Rd\cos(s/R)}.
$$
On each half-circle, $x=\cos(s/R)$ is monotone in $s$, and the right-hand
side is a rational function of $x$ with only $O(1)$ critical points.
Therefore $|\Phi'|$ has only $O(1)$ monotonicity intervals.

We can therefore subdivide $[0,2\pi R)$ into $O(1)$ intervals $J$ such
that both $\Phi$ and $|\Phi'|$ are monotone on each $J$. Fix one such
interval $J$. Since $\Phi$ is monotone on $J$, the set
$$
A_J:=\{s\in J:|\Phi(s)-R|\le Ch\}
$$
is an interval. Since $|\Phi'|$ is monotone on $J$, the set
$$
D_J:=\{s\in J:|\Phi'(s)|\ge c\ep R^{-1/2}\}
$$
is also an interval. Hence
$$
G_\sigma\cap J=A_J\cap D_J
$$
is an interval. Since there are only $O(1)$ such intervals
$J$, it follows that $G_\sigma$ is a union of $O(1)$ intervals.

We next estimate the length of $G_\sigma$. Let $J$ be one of the above
intervals. By definition of $A_J$, we have
$\Phi(A_J)\subset [R-Ch,R+Ch].$ Since $\Phi$ is monotone on $J$, it is also monotone on $A_J$, which is an interval. Hence,
$$ \int_{A_J}|\Phi'(s)|\,ds \le 2Ch.$$

Since $G_\sigma\cap J\subset A_J$ and
$|\Phi'(s)|\ge c\ep R^{-1/2}$ on $G_\sigma\cap J$, we get
$$ c\ep R^{-1/2}|G_\sigma\cap J| \le \int_{G_\sigma\cap J}|\Phi'(s)|\,ds \le \int_{A_J}|\Phi'(s)|\,ds \le 2Ch.$$
Thus,
$$
|G_\sigma\cap J| \lesssim \ep^{-1}R^{1/2}h
= \ep^{-1}R^{1/2-\alpha}.$$
Summing over the $O(1)$ intervals $J$, we obtain
$$
|G_\sigma|
\lesssim
\ep^{-1}R^{1/2-\alpha}.
$$
Since $y$ is parametrized by arclength, this gives
$$
\h^1(y(G_\sigma))
\le
|G_\sigma|
\lesssim
\ep^{-1}R^{1/2-\alpha}.
$$

Now $G_\sigma$ is a union of $O(1)$ intervals, so $y(G_\sigma)$ is a union
of $O(1)$ arcs on $RS^1$. Write
$$
y(G_\sigma)=\Gamma_1\cup\cdots\cup\Gamma_M
$$
as a union of connected arc components. Since $G_\sigma$ is a union of
$O(1)$ intervals, we have $M=O(1)$. The arcs $\Gamma_m$ are pairwise
disjoint up to endpoints, and endpoints have $\h^1$-measure zero.
Therefore,
$$ \sum_{m=1}^M \h^1(\Gamma_m)
= \h^1(y(G_\sigma)).$$
We first prove the estimate for a single arc $\Gamma$. Let $\ell:=\h^1(\Gamma).$ Partition $\Gamma$ into $\lceil \ell\rceil$ subarcs, each of length at most $1$. Each such subarc is contained in a Euclidean ball of
radius $1$. Therefore its $C_0$-neighborhood is contained in a Euclidean
ball of radius $C_0+1$, and hence has area at most $\pi(C_0+1)^2.$
Summing over the subarcs gives
$$
|[\Gamma]_{C_0}|
\lesssim \lceil \ell\rceil (C_0+1)^2
\lesssim_{C_0}
1+\ell = 1+\h^1(\Gamma).
$$

Applying this to each $\Gamma_m$, we get
$$
|[y(G_\sigma)]_{C_0}|
\le
\sum_{m=1}^M |[\Gamma_m]_{C_0}|
\lesssim_{C_0}
\sum_{m=1}^M \left(1+\h^1(\Gamma_m)\right).
$$
Since $M=O(1)$, this implies
$$
|[y(G_\sigma)]_{C_0}| \lesssim_{C_0} 1+\sum_{m=1}^M \h^1(\Gamma_m) \lesssim 1+\h^1(y(G_\sigma)).
$$

Finally, since every $x\in E_\sigma$ satisfies $|x-\pi(x)|\le h$ and
$\pi(x)\in y(G_\sigma)$, we have $E_\sigma\subset [y(G_\sigma)]_h.$ 
Let $Q:=\left[-\frac12,\frac12\right]^2.$ Since $h\le 1$, there is an absolute constant $C_0$ such that $$E_\sigma+Q
\subset [y(G_\sigma)]_{h+\sqrt2/2}
\subset [y(G_\sigma)]_{C_0}.$$ Hence,
$$\#(E_\sigma\cap\mathbb Z^2) = \sum_{\lambda\in E_\sigma\cap\mathbb Z^2}|\lambda+Q| \le
|E_\sigma+Q| \le |[y(G_\sigma)]_{C_0}|.$$
Combining this with the previous estimates gives
$$
\#(E_\sigma\cap\mathbb Z^2) \lesssim
1+\h^1(y(G_\sigma)) \lesssim 1+\ep^{-1}R^{1/2-\alpha}.$$
Since $0\le \alpha\le 1/2$ and $R\ge 1$, we have
$R^{1/2-\alpha}\ge 1$, and therefore
$$\#(E_\sigma\cap\mathbb Z^2) \lesssim
\ep^{-1}R^{1/2-\alpha}.$$

\end{proof}

\section{A Rough Damping Function in $H^{1/2}(\T^2)$ satisfying GCC}
\label{section:rough-damping-example}

The Logvinenko--Sereda theorem of Green, Mitkovski, and the first author gives observability from a fixed neighborhood of a set satisfying GCC. Consequently, if a damping function is bounded below by a positive constant on such a neighborhood, then their result directly yields an
estimate of the form \eqref{eq:annular-main-introduction}.

We now show that Theorem \ref{thm:annular-observability} applies in a genuinely different situation. We construct a non-negative damping function
$\gamma\in H^{1/2}(\T^2)\cap L^\infty(\T^2)$ which satisfies GCC, but whose zero set contains a dense open set. At the same time, $\gamma=1$ on a set of positive measure. Thus, $\gamma$ is not bounded below by a positive constant on any nonempty open subset of $\T^2$. The example therefore cannot be reduced to observability from an open region on which the damping is uniformly positive.

\begin{proposition}
\label{prop:rough-damping-example}
There exist a non-negative function $\gamma\in H^{1/2}(\T^2)\cap L^\infty(\T^2)$ and a constant $c_0>0$ such that
\begin{equation}
\label{eq:rough-damping-integral-gcc}
\inf_{x\in\T^2}\inf_{\nu\in S^1}
\int_0^1 \gamma(x+t\nu)\,dt
\ge c_0.
\end{equation}
Moreover, there exist a dense open set $Z\subset\T^2$ and a measurable set
$A\subset\T^2$ of positive measure such that $\gamma=0$ on $Z$ and $\gamma=1$ on $A$. Consequently, $\gamma$ is not a.e. constant, its $L^\infty$-equivalence class has no continuous representative, and $\gamma$ is not bounded below a.e. by a positive constant on
any nonempty open subset of $\T^2$.
\end{proposition}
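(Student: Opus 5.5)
We will construct $\gamma$ as an indicator of the complement of a union of small open sets: $\gamma=\mathbf 1_{\T^2\setminus Z}$, where $Z$ is a dense open set of small measure. This makes $\gamma=0$ on $Z$ and $\gamma=1$ on $A:=\T^2\setminus Z$, which has positive measure once $|Z|$ is small. The two things to arrange are $\gamma\in H^{1/2}$ and the integral GCC \eqref{eq:rough-damping-integral-gcc}.

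\textbf{Construction.} Enumerate a countable dense set $\{q_k\}\subset\T^2$, for instance the rational points. Put $Z:=\bigcup_k B(q_k,r_k)$ with radii $r_k$ decreasing very fast, say $r_k=\kappa\,2^{-k}$ with $\kappa$ small, or faster if needed. Density of $Z$ is automatic, and $|Z|\le\pi\sum_k r_k^2$ is small.

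\textbf{GCC.} Any segment $\{x+t\nu:0\le t\le 1\}$ meets each disc $B(q_k,r_k)$ in a set of length at most $2r_k$. Hence
\[
\int_0^1\gamma(x+t\nu)\,dt\ \ge\ 1-\sum_k 2r_k\ \ge\ \tfrac12
\]
once $\sum_k r_k\le\tfrac14$. Here we use the version of $\gamma$ equal to $1$ off $Z$ and $0$ on $Z$, so the line integrals are defined pointwise.

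\textbf{$H^{1/2}$ regularity (main obstacle).} Indicator functions of sets are in $H^s$ only for $s<1/2$ in general, and a disc indicator $\mathbf 1_{B(0,r)}$ is not in $H^{1/2}$. The cleanest fix is to replace indicators by smooth bumps while keeping $\gamma=0$ exactly on $Z$:
\[
\gamma:=\prod_k\bigl(1-\psi_k\bigr),
\]
where $\psi_k\in C_c^\infty(B(q_k,2r_k))$ satisfies $0\le\psi_k\le 1$, $\psi_k=1$ on $B(q_k,r_k)$, and $\|\nabla\psi_k\|_\infty\lesssim r_k^{-1}$.

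\begin{itemize}
\item $0\le\gamma\le 1$.
\item $\gamma=0$ on $Z$.
\item $\gamma=1$ off $\bigcup_k B(q_k,2r_k)$, a set of positive measure.
\item The GCC argument goes through with $2r_k$ replaced by $4r_k$.
\end{itemize}

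In fact $\gamma\in H^1$, which is stronger than needed, and it is shown as follows. The partial products $\gamma_n:=\prod_{k\le n}(1-\psi_k)$ are bounded by $1$, so $|\nabla\gamma_n|\le\sum_{k\le n}|\nabla\psi_k|$. Therefore
\[
\|\nabla\gamma_n\|_{L^2}\le\sum_k\|\nabla\psi_k\|_{L^2}\lesssim\sum_k r_k^{-1}\cdot r_k=\sum_k 1,
\]
which diverges. To repair this, take $\psi_k$ to be a logarithmic cutoff: equal to $1$ on $B(q_k,r_k)$ and supported in $B(q_k,r_k^{1/2})$, with profile $\log(r_k^{1/2}/|x-q_k|)/\log(r_k^{-1/2})$. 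In two dimensions $\|\nabla\psi_k\|_{L^2}^2\sim 1/\log(1/r_k)$, so $\|\nabla\psi_k\|_{L^2}\lesssim(\log 1/r_k)^{-1/2}$. Choosing $r_k=e^{-k^4}$, say, makes this summable.

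The GCC bound then needs $\sum_k r_k^{1/2}$ small, which also holds. Hence $\gamma_n$ is bounded in $H^1$ and converges in $L^2$ to $\gamma$, so $\gamma\in H^1\subset H^{1/2}$.

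\textbf{Consequences.} Since $\gamma=0$ on the dense open set $Z$ and $\gamma=1$ on a set of positive measure, $\gamma$ is not a.e. constant. Any continuous representative would vanish on $\overline Z=\T^2$, which is impossible. Every nonempty open set $U$ meets $Z$ in an open set of positive measure where $\gamma=0$, so $\gamma$ is not bounded below a.e. by a positive constant on $U$.
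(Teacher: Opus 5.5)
Your construction is correct and gives more than required, but it differs from the paper's in every ingredient.

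\textbf{The paper's route.} At each generation $n$ the paper places a smooth bump of fixed profile and radius $r_n=a2^{-3n}$ at the centre of every dyadic square of side $2^{-n}$. The GCC comes from counting that a unit segment meets $\lesssim 2^n$ such squares, so $\int_0^1\phi_n(x+t\nu)\,dt\lesssim 2^n r_n$. The regularity obtained is only $H^{1/2}$. It uses the interpolation bound $\|\phi_n\|_{\dot H^{1/2}}^2\lesssim\|\phi_n\|_2\|\nabla\phi_n\|_2\lesssim 2^{2n}r_n$, together with the Gagliardo seminorm, the inequality $|\prod a_n-\prod b_n|\le\sum|a_n-b_n|$, and Fatou. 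A fixed-profile bump in two dimensions costs Dirichlet energy $\sim 1$ regardless of its radius. Hence $\|\nabla\phi_n\|_2^2\sim 4^n$, and that argument cannot reach $H^1$.

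\textbf{Your route.} You avoid both the combinatorics and the fractional machinery. One bump per point of a dense sequence turns the GCC into a crude diameter sum. The logarithmic (capacitary) cutoffs have energy $4\pi/\log(1/r_k)$, which gives $\gamma\in H^1$ from the pointwise bound $|\nabla\gamma_n|\le\sum_k|\nabla\psi_k|$ plus weak compactness. The price is that the support radius $r_k^{1/2}$ is far larger than the zero radius $r_k$, which forces super-exponentially small radii.

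Incidentally, your abandoned first attempt already suffices for the stated $H^{1/2}$ claim if you follow the paper's route. There $\|\psi_k\|_{\dot H^{1/2}}^2\lesssim\|\psi_k\|_2\|\nabla\psi_k\|_2\lesssim r_k$, which is summable after taking square roots.

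\textbf{A numerical slip.} With $r_k=e^{-k^4}$ taken literally, $r_1^{1/2}=e^{-1/2}\approx 0.61$. So $\sum_k r_k^{1/2}$ is not small, and $B(q_1,r_1^{1/2})$ does not even embed in $\T^2$. Take $r_k=e^{-(k+k_0)^4}$ with $k_0$ large; this leaves $\|\nabla\psi_k\|_2$ summable. Two further bookkeeping points are then absorbed by the choice of $k_0$:
\begin{itemize}
\item A unit segment in $\T^2$ may meet two lattice translates of the same disc, so the per-disc bound should be $4r_k^{1/2}$ rather than $2r_k^{1/2}$.
\item The set where $\gamma=1$ is now the complement of $\bigcup_k B(q_k,r_k^{1/2})$, which has measure at least $1-\pi\sum_k r_k$.
\end{itemize}
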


We note that the integral GCC (\ref{eq:rough-damping-integral-gcc}) implies GGCC. Indeed, the same integral lower bound holds uniformly for the mollifications $\gamma_\ep$, so this follows immediately from Lemma \ref{lemma:ggcc-mollification}. In particular, \eqref{eq:rough-damping-integral-gcc} implies the GGCC required in Theorem \ref{thm:annular-observability}.

The rest of this section is devoted to the construction of the $\gamma$ in Proposition \ref{prop:rough-damping-example}. Define dyadic squares $$Q_{n,k} := \left[\frac{k_1}{2^n}, \frac{k_1+1}{2^n} \right) \times \left[\frac{k_2}{2^n}, \frac{k_2+1}{2^n} \right)$$
of side length $2^{-n}$ where $k = (k_1,k_2) \in \{0, 1, \ldots, 2^{n}-1\}^2$. Let $c_{n,k}$ denote the center of $Q_{n,k}$. Consider $\psi \in C_c^\infty(\R^2)$ satisfying $0 \le \psi\le 1$, $\psi \equiv 1$ on $B(0, \frac{1}{2})$, and $\psi\equiv 0$ on $\R^2\setminus B(0,1)$. Let $r_n = a2^{-3n}$ for small enough $a > 0$, and define $\phi_n: \T^2 \to [0,1]$ by
$$\phi_n(x) := \sum_{k} b_{n,k}(x) =  \sum_{k} \psi\left(\frac{x-c_{n,k}}{r_n} \right),$$
where the sum runs over all the dyadic squares of generation $n$. Note the $b_{n,k}$'s have disjoint supports. Moreover, $\phi_n \equiv 1$ on $\bigcup_k B(c_{n,k}, \frac{r_n}{2})$ and $\operatorname{supp} \phi_n \subset \bigcup_k B(c_{n,k},r_n)$. 

\begin{figure}[ht]
\centering
\begin{tikzpicture}[scale=5]

  \def\n{3}

  \pgfmathtruncatemacro{\N}{2^\n}
  \pgfmathtruncatemacro{\NmOne}{\N-1}
  \pgfmathsetmacro{\h}{1/\N}              
  \pgfmathsetmacro{\rad}{pow(2,-3*\n)}    

  \draw[thick] (0,0) rectangle (1,1);
  \draw[step=\h, gray!60, very thin] (0,0) grid (1,1);

  \foreach \i in {0,...,\NmOne} {
    \foreach \j in {0,...,\NmOne} {
      \pgfmathsetmacro{\cx}{(\i+0.5)/\N}
      \pgfmathsetmacro{\cy}{(\j+0.5)/\N}
      \fill[blue!50, opacity=0.7] (\cx,\cy) circle[radius=\rad];
      \fill[blue!90] (\cx,\cy) circle[radius=0.4pt];
    }
  }

  \node[below left] at (0,0) {$0$};
  \node[below] at (1,0) {$1$};
  \node[left] at (0,1) {$1$};

\end{tikzpicture}
\caption{The support of $\phi_n$ is contained in the union of the balls $B(c_{n,k},r_n)$, one centered in each dyadic square $Q_{n,k}$.}
\end{figure}

Now, define $$\gamma_N(x) := \prod_{n=1}^N (1 - \phi_n(x)),$$
and consider the pointwise limit
$$\gamma(x) := \lim_{N\to\infty} \gamma_N(x).$$
As $0 \le 1 - \phi_n(x) \le 1$ for each $n$, the $\gamma_N$'s are decreasing and bounded below, and so the pointwise limit exists.

Next, we derive some line intersection estimates that lead us to the geometric control condition.

\begin{lemma}
\label{line-estimate-for-phi_n}
There exists an absolute constant $C>0$ such that, for every
$n\ge1$, every $x\in\T^2$, and every $\nu\in S^1$,
$$
\int_0^1 \phi_n(x+t\nu)\,dt
\le C2^nr_n.
$$
In particular,
$$
\int_0^1 \phi_n(x+t\nu)\,dt
\le Ca2^{-2n},
$$
when $r_n=a2^{-3n}$.
\end{lemma}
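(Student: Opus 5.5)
Since $0\le\phi_n\le 1$ and $\operatorname{supp}\phi_n\subset\bigcup_k B(c_{n,k},r_n)$, we bound
$$\int_0^1 \phi_n(x+t\nu)\,dt \le \left|\{t\in[0,1]: x+t\nu\in \textstyle\bigcup_k B(c_{n,k},r_n) \bmod \mathbb Z^2\}\right|.$$
We then count how many of the balls the segment $\{x+t\nu:0\le t\le 1\}$ meets, and how long it stays in each. A line meets a disc of radius $r_n$ in a chord of length at most $2r_n$. It therefore suffices to show that the segment meets at most $C2^n$ of the balls $B(c_{n,k},r_n)$ (counted with multiplicity, since a ball can be met in several separate passes through the torus). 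That gives the bound $2r_n\cdot C2^n$, and the second claim follows by substituting $r_n=a2^{-3n}$.

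For the counting, lift to $\mathbb R^2$. The segment of length $1$ is a straight segment in $\mathbb R^2$, and the balls lift to the periodic family $B(c,r_n)$ with $c\in 2^{-n}(\mathbb Z^2+(\tfrac12,\tfrac12))$. Take $r_n<2^{-n-2}$, which holds for small $a$. Then each lifted ball lies in the interior of its own dyadic square of side $2^{-n}$ in the lifted grid, and distinct lifted balls lie in distinct squares. The number of balls met is therefore at most the number of dyadic squares of side $2^{-n}$ met by a segment of length $1$.

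That count is at most $2\cdot(1\cdot 2^n)+O(1)\le 4\cdot 2^n$. The segment has horizontal extent at most $1$, so it crosses at most $2^n+1$ vertical grid lines, and likewise at most $2^n+1$ horizontal ones. Each square it enters after the first requires crossing a grid line, so the number of squares is at most $1+2(2^n+1)$.

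Putting these together gives
$$\int_0^1\phi_n(x+t\nu)\,dt\le 2r_n\,(2\cdot 2^n+3)\le C2^n r_n.$$
The constant $C$ is absolute, independent of $x$, $\nu$ and $n$.

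The only step needing care is the square-counting bound uniformly in direction $\nu$, including when the segment passes exactly through grid corners; the grid-line crossing argument handles that. Nothing else is an obstacle.
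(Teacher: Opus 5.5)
Your proof is correct and follows the paper's argument. You count the grid lines of spacing $2^{-n}$ crossed by a unit segment to see that it meets $O(2^n)$ dyadic squares, and then bound each ball's contribution by the chord length $2r_n$ using $0\le\phi_n\le 1$; your lift to $\mathbb R^2$ and the remark that $r_n<2^{-n-2}$ places each ball inside its own square make explicit a step the paper leaves implicit.
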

\begin{proof}
    Consider a line segment $\ell$ of unit length in $\T^2$ that makes an angle $\theta \in [0, \pi/2]$ with the vertical. Let $N_v$ and $N_h$ denote the number of vertical and horizontal grid lines (of length $2^{-n}$) that meet $\ell$, respectively. Then, $(N_v - 1)\, 2^{-n} \le \sin \theta \le 1$, giving $N_v \le 2^n + 1$. Similarly, $N_h \le 2^n + 1$. Thus, $\ell$ intersects at most $N_v + N_h + 1 \lesssim 2^n$ dyadic squares.
    
    Next, recall that $\phi_n$ is supported on $\bigcup_k B(c_{n,k}, r_n)$, and consider the trivial bound $\h^1(\ell \cap B(c_{n,k}, r_n)) \le 2r_n$. As $\phi_n \le 1$, we have $\int_\ell \phi_n d\h^1 \lesssim 2r_n \cdot 2^n$ which is the desired estimate.
\end{proof}

We will use this to obtain a uniform lower bound on the line integral $\int_\ell \gamma\, d\h^1$. 

\begin{lemma}
\label{line-estimate-for-gamma}
There exists $c_0>0$ such that
\begin{equation}
\label{eq:line-estimate-for-gamma}
\inf_{x\in\T^2}\inf_{\nu\in S^1} \int_0^1 \gamma(x+t\nu)\,dt \ge c_0.
\end{equation}
\end{lemma}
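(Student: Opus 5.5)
The plan is to bound $1-\gamma$ along each segment by summing the contributions of the individual factors. The key elementary inequality is that for numbers $a_n\in[0,1]$,
$$1-\prod_{n=1}^N(1-a_n)\le \sum_{n=1}^N a_n,$$
which follows by induction on $N$ (or from the union bound). Applying it pointwise with $a_n=\phi_n(x+t\nu)$ gives
$$\gamma_N(x+t\nu)\ge 1-\sum_{n=1}^N \phi_n(x+t\nu)$$
for every $t\in[0,1]$.

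Next, I integrate this over $t\in[0,1]$ and invoke Lemma \ref{line-estimate-for-phi_n}:
$$\int_0^1\gamma_N(x+t\nu)\,dt \ge 1-\sum_{n=1}^N Ca2^{-2n}\ge 1-\frac{Ca}{3}.$$
This bound is uniform in $N$, $x$ and $\nu$. Choosing the parameter $a>0$ small enough that $Ca/3\le 1/2$, which is allowed since the construction says ``for small enough $a$'', gives a lower bound of $1/2$.

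Finally, I pass to the limit $N\to\infty$. The functions $\gamma_N$ decrease pointwise to $\gamma$ and satisfy $0\le\gamma_N\le 1$, so the restriction $t\mapsto \gamma_N(x+t\nu)$ converges pointwise on $[0,1]$ and is dominated by $1$. By dominated (or monotone) convergence along the segment, $\int_0^1\gamma(x+t\nu)\,dt=\lim_N\int_0^1\gamma_N(x+t\nu)\,dt\ge 1/2$. This proves \eqref{eq:line-estimate-for-gamma} with $c_0=1/2$.

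There is one subtlety. The statement integrates $\gamma$ along lines, so $\gamma$ must be understood as the specific pointwise limit defined everywhere, not as an $L^\infty$ class. This causes no difficulty, because each $\gamma_N$ is continuous and $\gamma$ is a pointwise limit of continuous functions, hence Borel measurable on each segment. The only real input is the summability of $2^{-2n}$ coming from Lemma \ref{line-estimate-for-phi_n}, so I expect no step here to be a genuine obstacle. The one point needing care is to fix $a$ so that $Ca/3\le1/2$ \emph{before} the construction, so that this choice of $a$ is also consistent with the later claims that $\gamma\in H^{1/2}$ and that $\gamma=1$ on a set of positive measure.
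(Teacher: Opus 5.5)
Your proposal is correct and follows the paper's proof essentially verbatim. Both arguments use the pointwise inequality $\prod_{n=1}^N(1-y_n)\ge 1-\sum_{n=1}^N y_n$, integrate along the segment using Lemma \ref{line-estimate-for-phi_n}, choose $a$ small, and pass $N\to\infty$ by dominated convergence. Your explicit constant $c_0=1/2$ (via $\sum_{n\ge1}2^{-2n}=1/3$) and your remarks on pointwise measurability and on fixing $a$ once for all later requirements are harmless refinements.
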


\begin{proof}
Fix $x\in\T^2$ and $\nu\in S^1$. For $0\le y_1,\ldots,y_N\le1$, one has $$ \prod_{n=1}^N(1-y_n)\geq1-\sum_{n=1}^Ny_n.$$ Applying this pointwise with $y_n=\phi_n(x+t\nu)$ and integrating over $t\in[0,1]$, we obtain $$ \int_0^1\gamma_N(x+t\nu)\,dt
\ge 1-\sum_{n=1}^N\int_0^1\phi_n(x+t\nu)\,dt. $$
By Lemma \ref{line-estimate-for-phi_n},
$$ \int_0^1\gamma_N(x+t\nu)\,dt \ge 1 - Ca\sum_{n=1}^\infty2^{-2n}.$$
Choose $a>0$ sufficiently small that
$$c_0 := 1-Ca\sum_{n=1}^\infty2^{-2n} >0.$$
Since $0\le\gamma_N\le1$ and $\gamma_N\to\gamma$ pointwise, dominated convergence gives
$$ \int_0^1\gamma(x+t\nu)\,dt = \lim_{N\to\infty} \int_0^1\gamma_N(x+t\nu)\,dt
\ge c_0. $$
\end{proof}

Lastly, we show that $\gamma\in H^{1/2}(\T^2)$. A direct computation gives $\|b_{n,k}\|_{L^2}=r_n\|\psi\|_{L^2}$, and 
$\|\nabla b_{n,k}\|_{L^2}=\|\nabla\psi\|_{L^2}$. Since the functions $b_{n,k}$ have disjoint supports for each fixed $n$, $\|\phi_n\|_{L^2}^2\lesssim 2^{2n}r_n^2$, and $\|\nabla\phi_n\|_{L^2}^2\lesssim 2^{2n}.$
Hence, by interpolation,
$$
\|\phi_n\|_{\dot H^{1/2}}^2
\lesssim
\|\phi_n\|_{L^2}\|\nabla\phi_n\|_{L^2}
\lesssim 2^{2n}r_n
=
a2^{-n},
$$
and therefore
$$
\sum_{n=1}^\infty \|\phi_n\|_{\dot H^{1/2}}<\infty.
$$
On $\T^2$, the Fourier and Gagliardo seminorms are equivalent: $[f]_{H^{1/2}(\T^2)}
\approx \|f\|_{\dot H^{1/2}(\T^2)}.$ Consequently,
$$
\sum_{n=1}^\infty [\phi_n]_{H^{1/2}(\T^2)}
<\infty.
$$

For $a_n,b_n\in[0,1]$,
$$
\left|\prod_{n=1}^N a_n-\prod_{n=1}^N b_n\right|
\le
\sum_{n=1}^N|a_n-b_n|.
$$
Applying this with $a_n=1-\phi_n(x)$ and $b_n=1-\phi_n(y)$, and using Minkowski's inequality for the Gagliardo seminorm, gives
$$ [\gamma_N]_{H^{1/2}(\T^2)} \le \sum_{n=1}^N[\phi_n]_{H^{1/2}(\T^2)}.$$
Thus, by Fatou's lemma,
$$ [\gamma]_{H^{1/2}(\T^2)}
\le \liminf_{N\to\infty}\,[\gamma_N]_{H^{1/2}(\T^2)}
\le \sum_{n=1}^\infty[\phi_n]_{H^{1/2}(\T^2)} <\infty.
$$
Since $0\le\gamma\le1$, we also have $\gamma\in L^2(\T^2)$, and hence $\gamma\in H^{1/2}(\T^2)$.

Finally, we discuss the roughness properties of $\gamma$. Let $$Z:=\bigcup_{n=1}^\infty\bigcup_k B(c_{n,k},r_n/2).$$
Since $\phi_n\equiv1$ on $B(c_{n,k},r_n/2)$, we have $\gamma=0$ on $Z$. The set $Z$ is open and dense: every nonempty open set contains a dyadic square $Q_{n,k}$ for all sufficiently large $n$, and hence contains some ball $B(c_{n,k},r_n/2)$. On the other hand, since
$$|\{\phi_n>0\}|\lesssim 2^{2n}r_n^2=a^2\, 2^{-4n},$$
we may choose $a>0$ sufficiently small that
$$\sum_{n=1}^\infty |\{\phi_n>0\}|<1.$$
Thus,
$$A:=\T^2 \,\setminus\bigcup_{n=1}^\infty \{\phi_n>0\}$$
has positive measure. For every $x\in A$, we have $\phi_n(x)=0$ for all $n\ge 1$, and therefore $\gamma(x)=1$. Hence, $|\{x\in \T^2: \gamma(x)=1\}|>0.$

Since $\gamma=0$ on $Z$ and $\gamma=1$ on the positive-measure set $A$, $\gamma$ is not a.e. constant. Moreover, every nonempty open set $U\subset\T^2$ meets the dense open set
$Z$ in a nonempty open set. Thus $U$ contains a set of positive measure
on which $\gamma=0$, and consequently $\gamma$ cannot be bounded below
almost everywhere by a positive constant on $U$.

Finally, the $L^\infty$ equivalence class of $\gamma$ has no continuous representative. Indeed, suppose that
$\widetilde\gamma\in C(\T^2)$ and
$\widetilde\gamma=\gamma$ almost everywhere. On every open ball
$B\subset Z$, one has $\widetilde\gamma=0$ almost everywhere. By
continuity, $\widetilde\gamma=0$ everywhere on $B$. Hence
$\widetilde\gamma=0$ on the dense set $Z$, and continuity gives $\widetilde\gamma\equiv0$. This contradicts $\gamma=1$ on the positive-measure set $A$. This completes the example.

\section{Trigonometric Polynomials with Spectra Near a Convex Polygon}

In this section, we establish an observability inequality for functions in $L^2(\T^2)$ with Fourier support contained in the neighborhood of a convex polygon $P\subset \R^2$. We denote the vertices of $P$ by $v_1, \ldots, v_{N}$ and the sides by $S_1,\ldots,S_N$ ordered cyclically. Let $\nu_j$ be the unit vector perpendicular to $S_j$ and define $\Theta := \{\nu_1, \ldots, \nu_N\}$. For $R \ge 1$, let $\Lambda_R(P)$ denote the lattice points in a fixed-width neighborhood of $R\partial P$ (the $R$-dilate of the boundary of $P$) i.e.,
$$\Lambda_R(P) := \{\xi \in \Z^2: \operatorname{dist}(\xi, R\partial P) \le 1\}.$$
With this notation, we state our main result below:

\begin{tcolorbox}[colback=SeaGreen!10, colframe=SeaGreen!80, boxrule=0.5mm, left=2mm, right=2mm, boxsep=1mm, arc=1mm.]
\begin{theorem}
\label{main-result-polygon}
Let $P\subset\mathbb R^2$ be a convex polygon, and let
$\gamma\in L^\infty(\mathbb T^2)$ be non-negative. Suppose that $\gamma$ satisfies the generalized geometric control condition (GGCC) in the normal direction of every side of $P$: there exist $L,c_0>0$ such that
$$
\liminf_{\delta\to0}
\inf_{x\in\mathbb T^2} \inf_{\nu \in \Theta}
\frac{1}{|\Gamma_{x,\nu,L,\delta}|}
\int_{\Gamma_{x,\nu,L,\delta}}\gamma(y)\,dy
\ge c_0.
$$

Then there exists a constant $C>0$, depending on $P$ and $\gamma$ but independent of $R$, such that 
$$\|f\|_{L^2(\mathbb T^2)}^2 \le C\int_{\mathbb T^2}|f(x)|^2\gamma(x)\,dx$$
for every $R\geq 1$ and every $f\in L^2(\mathbb T^2)$ satisfying $$\operatorname{supp}\widehat f\subset\Lambda_R(P).$$
\end{theorem}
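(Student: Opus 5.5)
We argue by contradiction, following the outline in the Introduction. Suppose there are $R_n\ge1$ and $f_n$ with $\operatorname{supp}\widehat{f_n}\subset\Lambda_{R_n}(P)$, $\|f_n\|_2=1$ and $\int|f_n|^2\gamma\to0$. If $R_n$ stays bounded, the $f_n$ lie in a fixed finite-dimensional space of trigonometric polynomials. The compactness argument at the end of the proof of Theorem \ref{thm:annular-observability} then gives a contradiction, since GGCC forces $|\{\gamma>0\}|>0$. So we may assume $R_n\to\infty$.

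Fix a large constant $K$, independent of $n$. For each vertex $v_j$, remove from $\Lambda_{R_n}(P)$ the lattice points in the ball $B(R_nv_j,K)$. What remains splits into side pieces $\Lambda^{(j)}$, one for each side $S_j$, each lying in a strip of width $2$ parallel to $S_j$. Group the side pieces by normal direction. Since $P$ is convex, at most two sides share a given normal line $\pm\nu$, and these are parallel opposite sides. For each direction class $\nu\in\Theta/\pm$, the function $g_\nu$ with $\widehat{g_\nu}=\widehat{f_n}\mathbf 1_{\bigcup_{\nu_j=\pm\nu}\Lambda^{(j)}}$ therefore has spectrum in at most two parallel strips of width $\le2$. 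Theorem \ref{two-strip-theorem} then gives $\|g_\nu\|_2^2\le C_{\mathrm{str}}\|g_\nu\sqrt\gamma\|_2^2$ with a constant independent of the separation, hence of $R_n$.

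Write $f_n=\sum_\nu g_\nu+v_n$, where $v_n$ carries the vertex frequencies. We next show that the cross terms $\int g_\nu\overline{g_{\nu'}}\gamma$ with $\nu\ne\pm\nu'$ are small, and for this we need only $\gamma\in L^2$. Take $\lambda\in\Lambda^{(j)}$ and $\mu\in\Lambda^{(k)}$ on non-parallel sides. Two lines meeting at a positive angle separate linearly away from their intersection, and here each intersection point is a vertex or lies outside $R_nP$. Consequently the difference $\sigma=\mu-\lambda$ satisfies $|\sigma|\gtrsim_P K$, with the constant depending on the angles of $P$. Moreover, for fixed $\sigma$ the number of pairs $(\lambda,\mu)$ with $\mu-\lambda=\sigma$ drawn from two transverse width-$2$ strips is $O_P(1)$, because the intersection of a strip with a translate of a transverse strip is a parallelogram of bounded area. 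Running the Cauchy--Schwarz argument from the annular proof with $N(\sigma)=O(1)$, we obtain
\[
\Bigl|\sum_{\nu\ne\pm\nu'}\int g_\nu\overline{g_{\nu'}}\gamma\Bigr|\lesssim_P\Bigl(\sum_{|\sigma|\gtrsim K}|\widehat\gamma(\sigma)|^2\Bigr)^{1/2}\|f_n\|_2^2 ,
\]
which tends to $0$ as $K\to\infty$ since $\gamma\in L^2$. The cross terms between $v_n$ and the rest are controlled crudely by $\|\gamma\|_\infty\|v_n\|_2$. Combining these bounds, we obtain
\[
1\le C\|f_n\sqrt\gamma\|_2^2+o_K(1)+C\|v_n\|_2 .
\]
So for $K$ large and $n$ large, $\|v_n\|_2\ge c>0$: a fixed proportion of the Fourier mass sits within distance $K$ of some dilated vertex $R_nv_j$.

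It remains to rule out this concentration near a vertex, and we expect this step to be the main obstacle. Pass to a subsequence along which the same vertex $v_j$ carries the mass. Let $\xi_n$ be a lattice point near $R_nv_j$ and set $F_n:=e^{-2\pi i\xi_n\cdot x}f_n$. The spectrum of $F_n$ lies in $\Lambda_{R_n}(P)-\xi_n$, which converges, locally and up to bounded translation, to the lattice points within distance $O(1)$ of the boundary of the tangent cone $\mathcal C$ of $P$ at $v_j$: two half-lines meeting at angle less than $\pi$. Moreover $\int|F_n|^2\gamma=\int|f_n|^2\gamma\to0$, and $\|F_n\|_2=1$.

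Extract a weak limit $F_n\rightharpoonup F$ in $L^2$. We must argue that $F\ne0$, because the mass $\|v_n\|_2\ge c$ sits on finitely many fixed frequencies, which behave well under weak convergence. Then weak lower semicontinuity of the quadratic form $\int|\cdot|^2\gamma$ gives $F\sqrt\gamma=0$. Now $F$ is a nonzero $L^2$ function whose Fourier series is supported in a neighborhood of $\partial\mathcal C$, in particular in a cone of opening less than $\pi$ up to a bounded shift, and which vanishes on $\{\gamma>0\}$, a set of positive measure. The cone-uniqueness result (a two-variable analogue of the F.\ and M.\ Riesz theorem, proved by restricting to lines transverse to the cone, where the series becomes one-sided and thus boundary values of analytic functions) forces $F\equiv0$, a contradiction.

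The delicate points are the following. First, we must make the weak limit nonzero; if needed we can combine the two-strip bound on the pieces adjacent to the vertex to keep the mass from escaping along the edges. Second, we must handle the two edges emanating from the vertex, which are not parallel, so the two-strip theorem cannot combine them directly. This is exactly where the cone-uniqueness and compactness argument replaces a quantitative estimate.
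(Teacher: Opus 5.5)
Your proposal is correct and follows the paper's proof essentially step for step. The paper likewise uses compactness for bounded $R_n$, then removes vertex neighborhoods and applies the two-strip theorem to each parallel class, with $\ell^2$-tail almost-orthogonality of $\widehat\gamma$ forcing mass near some vertex. It then modulates that vertex to the origin, extracts a weak limit that stays nonzero because the vertex mass sits on a fixed finite set of frequencies, and applies cone uniqueness (Proposition~\ref{convex-cone}). The only cosmetic differences are that the paper packages the side-strip estimate as Proposition~\ref{polygon-without-corners} and removes tangential neighborhoods of the vertices rather than balls $B(R_nv_j,K)$.
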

\end{tcolorbox}

\textit{Proof Sketch.}
Before proving the theorem, we explain the mechanism. The proof is by contradiction. If the desired estimate failed, we could find a sequence of normalized functions $f_n$ with $\operatorname{supp}\widehat{f_n}\subset \Lambda_{R_n}(P)$ and $\|f_n\sqrt{\gamma}\|_2\to 0$. If the radii $R_n$ remain bounded, then the functions lie in a fixed finite-dimensional space, and compactness gives a nonzero trigonometric polynomial vanishing on $\{\gamma>0\}$, which is impossible. Thus, $R_n\to\infty$.

Next, we divide the frequency support into two parts: the portions lying away from the vertices, and the portions lying near the vertices; see Figure \ref{fig:polygon-decomposition}. The estimate away from the vertices is provided by Proposition \ref{polygon-without-corners}, whose proof uses the strip estimate of Theorem \ref{two-strip-theorem} together with an almost-orthogonality argument between different sides. Thus, any counterexample must place a definite amount of its Fourier mass near one of the vertices. We then translate that vertex to the origin by a modulation. After passing to a limit, the local geometry near the vertex becomes the cone generated by the two sides meeting there. The limiting function is therefore supported in a translate of this cone, while still vanishing on the positive measure set $\{\gamma>0\}$. The cone uniqueness result in Proposition \ref{convex-cone} rules this out. This contradiction completes the argument.

    \begin{figure}[ht]
\centering
\begin{tikzpicture}[scale=0.65, line cap=round, line join=round]

\definecolor{SideSky}{RGB}{186,222,245}
\definecolor{SideSkyLabel}{RGB}{35,105,175}
\definecolor{CornerLightRed}{RGB}{205,100,80}
\definecolor{CornerLightRedLabel}{RGB}{130,45,35}
\definecolor{GridSoft}{RGB}{220,220,220}

\colorlet{gridgray}{GridSoft}
\colorlet{sideblue}{SideSky}
\colorlet{sidelabel}{SideSkyLabel}
\colorlet{cornerorange}{CornerLightRed}
\colorlet{cornerlabel}{CornerLightRedLabel}

\def\Rout{6}     
\def\Rin{4.5}      
\def\Rsmall{2.10}   
\def\tcut{0.23}     
\pgfmathsetmacro{\omt}{1-\tcut}

\foreach \x in {-8,-7,...,8} {
  \draw[gridgray, dashed, line width=0.35pt] (\x,-8) -- (\x,8);
}
\foreach \y in {-8,-7,...,8} {
  \draw[gridgray, dashed, line width=0.35pt] (-8,\y) -- (8,\y);
}

\foreach \x in {-8,-7,...,8} {
  \foreach \y in {-8,-7,...,8} {
    \fill[gray!55] (\x,\y) circle (1.15pt);
  }
}

\foreach \i/\ang in {0/90,1/18,2/-54,3/-126,4/162} {
  \coordinate (O\i) at (\ang:\Rout);
  \coordinate (I\i) at (\ang:\Rin);
  \coordinate (P\i) at (\ang:\Rsmall);
}

\foreach \i/\j in {0/1,1/2,2/3,3/4,4/0} {
  \coordinate (Os\i) at ($(O\i)!\tcut!(O\j)$);
  \coordinate (Oe\i) at ($(O\i)!\omt!(O\j)$);
  \coordinate (Is\i) at ($(I\i)!\tcut!(I\j)$);
  \coordinate (Ie\i) at ($(I\i)!\omt!(I\j)$);
}

\foreach \i in {0,1,2,3,4} {
  \fill[sideblue] (Os\i)--(Oe\i)--(Ie\i)--(Is\i)--cycle;
}

\fill[cornerorange] (Oe4)--(O0)--(Os0)--(Is0)--(I0)--(Ie4)--cycle;
\fill[cornerorange] (Oe0)--(O1)--(Os1)--(Is1)--(I1)--(Ie0)--cycle;
\fill[cornerorange] (Oe1)--(O2)--(Os2)--(Is2)--(I2)--(Ie1)--cycle;
\fill[cornerorange] (Oe2)--(O3)--(Os3)--(Is3)--(I3)--(Ie2)--cycle;
\fill[cornerorange] (Oe3)--(O4)--(Os4)--(Is4)--(I4)--(Ie3)--cycle;

\begin{scope}
  \clip (O0)--(O1)--(O2)--(O3)--(O4)--cycle;
  \foreach \x in {-8,-7,...,8} {
    \foreach \y in {-8,-7,...,8} {
      \fill[black] (\x,\y) circle (0.95pt);
    }
  }
\end{scope}

\begin{scope}
  \clip (I0)--(I1)--(I2)--(I3)--(I4)--cycle;
  \foreach \x in {-8,-7,...,8} {
    \foreach \y in {-8,-7,...,8} {
      \fill[gray!55] (\x,\y) circle (1.15pt);
    }
  }
\end{scope}

\foreach \i in {0,1,2,3,4} {
  \draw[dashed, line width=0.8pt] (Os\i)--(Is\i);
  \draw[dashed, line width=0.8pt] (Oe\i)--(Ie\i);
}

\draw[thick] (O0)--(O1)--(O2)--(O3)--(O4)--cycle;
\draw[thick] (I0)--(I1)--(I2)--(I3)--(I4)--cycle;
\draw[thick] (P0)--(P1)--(P2)--(P3)--(P4)--cycle;

\node at (0,-6.6) {$\Lambda_R(P) := \{\xi\in \Z^2: \operatorname{dist}(\xi, R\partial P) \le 1\}$};
\node at (1.95,1.35) {$\partial P$};

\node[text=sidelabel] at (-5,5.4)
  {\large side strips $\Lambda_R^{(A)}(P)$};

\draw[sidelabel, line width=1pt, ->]
  (-4.5,4.95) -- (-3.3,3.5);

\node[text=cornerlabel] at (3.6,6.5)
  {\large corner pieces $\mathcal C_{R,k}^{(A)}$};

\draw[cornerlabel, line width=1pt, ->]
  (1.4,6.2) -- (0.5,5.5);

\end{tikzpicture}
\caption{Decomposition of $\Lambda_R(P)$ into side strips $\Lambda_R^{(A)}(P)$ and corner pieces $\bigcup_{k=1}^N \mathcal C_{R,k}^{(A)}$.}
\label{fig:polygon-decomposition}
\end{figure}

\medskip
Now, we formally define the portion of the frequency support lying away from the vertices. Note that  $S_j=[v_j,v_{j+1}]$ with indices taken modulo $N$. Let $\ell_j$ denote the line containing the segment $S_j$, and for $R\ge 1$, let $\pi_{j,R}$ denote the orthogonal projection onto the line $R\ell_j$. For $A\ge 1$, define
$$ \widetilde\Gamma_{j,R}^{(A)}
:= \left\{\xi\in \mathbb Z^2:
\operatorname{dist}(\xi,R\ell_j)\le 1,\, \pi_{j,R}\xi\in RS_j,\, \operatorname{dist}(\pi_{j,R}\xi, Rv_j)\ge A, \operatorname{dist}(\pi_{j,R}\xi, Rv_{j+1})\ge A
\right\}.$$

We define
$$ \Lambda_R^{(A)}(P)
:= \bigcup_{j=1}^N \widetilde\Gamma_{j,R}^{(A)}.$$
The sets $\widetilde\Gamma_{j,R}^{(A)}$ may not be disjoint, so we choose a disjoint decomposition:
$$ \Lambda_R^{(A)}(P)
= \bigcup_{j=1}^N \Gamma_{j,R}^{(A)},$$
where $\Gamma_{j,R}^{(A)}\subset \widetilde\Gamma_{j,R}^{(A)}$.

\begin{proposition}
\label{polygon-without-corners}
    Assume that $\gamma$ satisfies the GGCC in the normal direction to each side of the convex polygon $P$. Then there exists $A_0\ge 1$ such that for every $A\ge A_0$ there is a constant
$$C_A(P,\gamma)<\infty$$
with the following property.

Let $\Lambda_R^{(A)}(P)\subset \Lambda_R(P)$ be obtained by removing tangential neighborhoods of size $A$ around every vertex of $R\partial P$ along the adjacent sides.
Then every $g \in L^2(\T^2)$ with
$\operatorname{supp} \widehat{g} \subset \Lambda_R^{(A)}(P)$ satisfies
$$
\|g\|_{L^2(\T^2)}^2 \le C_A(P,\gamma)\int_{\T^2} |g|^2\gamma,
$$
uniformly in $R$.
\end{proposition}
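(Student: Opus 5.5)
Write $g=\sum_{j=1}^N g_j$ with $\widehat{g_j}=\widehat g\,\mathbf 1_{\Gamma_{j,R}^{(A)}}$. Since the $\Gamma_{j,R}^{(A)}$ are disjoint, $\|g\|_2^2=\sum_j\|g_j\|_2^2$.

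The first step groups the sides by direction. For a convex polygon, at most two sides share a given normal line, namely $S_j$ and a parallel opposite side $S_{j'}$ with $\nu_{j'}=-\nu_j$. Group the indices into classes $\mathcal J_1,\dots,\mathcal J_K$ according to $\pm\nu_j$, and set $G_k:=\sum_{j\in\mathcal J_k}g_j$. Each $\Gamma_{j,R}^{(A)}$ lies in the strip $\{\xi:|\xi\cdot\nu_j-R\,c_j|\le 1\}$, where $c_j$ is the support value of $P$ in direction $\nu_j$. So $\operatorname{supp}\widehat{G_k}$ lies in at most two strips of width $2$ parallel to $\nu_j^\perp$, and their separation is of order $R$. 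Theorem \ref{two-strip-theorem} applies in direction $\nu_j\in\Theta$ with $W=1$ and gives $\|G_k\|_2^2\le C_{\mathrm{str}}\|G_k\sqrt\gamma\|_2^2$, uniformly in $R$. Expanding $\|g\sqrt\gamma\|_2^2$ as in \eqref{eq:off-diag-small}, it then suffices to show that the cross terms between distinct classes satisfy
$$\Bigl|\sum_{k\ne l}\int G_k\overline{G_l}\,\gamma\Bigr|\le \tfrac{1}{2C_{\mathrm{str}}}\|g\|_2^2,$$
once $A\ge A_0$, with $C_A$ then obtained uniformly in $R$.

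The second step is the cross-term estimate, which I expect to be the main obstacle. For $\lambda\in\Gamma_{j,R}^{(A)}$ and $\mu\in\Gamma_{i,R}^{(A)}$ with non-parallel $S_i,S_j$, I would show $|\mu-\lambda|\gtrsim_P A$. The two points lie within distance $1$ of non-parallel segments. If those segments are non-adjacent, they are at distance $\gtrsim_P R\ge A$, up to the finitely many small-$R$ cases. If they are adjacent at the vertex $Rv$, both points are at tangential distance $\ge A$ from $Rv$, and the sides meet at an angle bounded away from $0$ and $\pi$; hence $|\lambda-\mu|\ge c_P A-2$. Moreover, for a fixed $\sigma=\mu-\lambda$, solving $\mu-\lambda=\sigma$ with $\lambda$ in a width-$2$ strip along $\ell_j$ and $\mu$ in a width-$2$ strip along $\ell_i$ confines $\lambda$ to the intersection of the first strip with a translate of the second. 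Transversality makes this a parallelogram of area $O_P(1)$, so the number of solutions $(\lambda,\mu)$ is $O_P(1)$, uniformly in $\sigma$ and $R$.

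Repeating the Cauchy--Schwarz argument from the proof of Theorem \ref{thm:annular-observability}, with $N(\sigma)\lesssim_P 1$ in place of the lattice count, gives
$$\Bigl|\sum_{k\ne l}\int G_k\overline{G_l}\,\gamma\Bigr|\lesssim_P\Bigl(\sum_{|\sigma|\gtrsim A}|\widehat\gamma(\sigma)|^2\Bigr)^{1/2}\|g\|_2^2 .$$
Only $\gamma\in L^\infty\subset L^2$ is needed, because the $L^2$ Fourier tail of $\gamma$ tends to $0$ as $A\to\infty$. Choosing $A_0$ so that this tail is at most $\frac{1}{2C'C_{\mathrm{str}}}$ closes the argument:
$$\|g\|_2^2=\sum_k\|G_k\|_2^2\le C_{\mathrm{str}}\sum_k\|G_k\sqrt\gamma\|_2^2\le C_{\mathrm{str}}\|g\sqrt\gamma\|_2^2+\tfrac12\|g\|_2^2 .$$

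The remaining care points are the following. Overlaps of $\widetilde\Gamma_{j,R}^{(A)}$ near the vertices are handled by the disjoint choice of the $\Gamma_{j,R}^{(A)}$. Small $R$, where non-adjacent sides need not be far apart, is handled either because $\Lambda_R^{(A)}(P)$ is then empty or because the constant may depend on $A$: for $R\le R_1(A)$, the compactness argument used at the end of the proof of Theorem \ref{thm:annular-observability} applies, since GGCC forces $|\{\gamma>0\}|>0$.
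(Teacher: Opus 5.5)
Your proposal is correct and follows the paper's proof essentially step for step. You group the sides into parallel classes, apply Theorem \ref{two-strip-theorem} to each class, and control the inter-class cross terms through the separation $|\mu-\lambda|\ge c_PA-2$, the $O_P(1)$ multiplicity bound and the $L^2$ tail of $\widehat\gamma$; this is exactly the content of Lemmas \ref{geometric-lemma-1} and \ref{orthogonality-lemma-1}, and your small-$R$ remark (a nonempty $\Gamma_{i,R}^{(A)}$ forces $R\gtrsim_P A$) matches the paper's treatment of non-adjacent sides. The remaining differences are cosmetic: you absorb the cross terms directly into $\|g\|_2^2$, as in the annular proof, rather than through the weighted norms $X_\alpha$, and in the multiplicity count it is the bounded diameter of the intersection of two transversal width-$2$ strips, not merely its area, that bounds the number of lattice points.
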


\begin{proposition}
\label{convex-cone}
    Let $C_0\subset \R^2$ be a closed convex cone with vertex at the origin, nonempty interior, and opening angle strictly smaller than $\pi$. Let $C = a + C_0$ for some $a\in \R^2$.
Suppose
$$
f(x)=\sum_{\xi\in C\cap \Z^2}\widehat f(\xi)e^{2\pi i \xi\cdot x}\in L^2(\T^2),
$$
vanishes on a set of positive measure. Then, $f \equiv 0$. 
\end{proposition}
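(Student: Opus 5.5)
The plan is to reduce, by an integer linear change of variables, to a Fourier series whose spectrum lies in a translated quadrant. We then apply the one-variable uniqueness theorem for $H^2$ of the disc twice, once in each variable.

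\textbf{Step 1: an integer matrix.} Since $C_0$ is a closed convex cone with nonempty interior and opening angle strictly less than $\pi$, its dual cone
$$C_0^*:=\{w\in\R^2: w\cdot\xi\ge 0 \text{ for all } \xi\in C_0\}$$
also has nonempty interior. Rational directions are dense, so we pick two linearly independent integer vectors $p,q$ in the interior of $C_0^*$. For $\xi=a+\eta$ with $\eta\in C_0$ we get $\xi\cdot p\ge a\cdot p$ and $\xi\cdot q\ge a\cdot q$. Hence there is an integer $M\ge 0$ with $\xi\cdot p\ge -M$ and $\xi\cdot q\ge -M$ for all $\xi\in C$. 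Let $B$ be the integer matrix with rows $p,q$. Then $\det B\neq 0$, the map $\xi\mapsto B\xi$ is injective on $\Z^2$, and $B(C\cap\Z^2)\subset\{(m_1,m_2)\in\Z^2: m_1,m_2\ge -M\}$.

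\textbf{Step 2: pull back to the torus.} Define $G(y):=f(B^Ty)$. The map $y\mapsto B^Ty$ is a well-defined $|\det B|$-to-one covering $\T^2\to\T^2$ that scales Lebesgue measure by a constant factor. Consequently:
\begin{itemize}
\item $G\in L^2(\T^2)$;
\item $G$ vanishes on a set $E\subset\T^2$ of positive measure, namely the preimage of the zero set of $f$;
\item formally $G(y)=\sum_{\xi}\widehat f(\xi)e^{2\pi i (B\xi)\cdot y}$, so $\widehat G(m)=\widehat f(\xi)$ if $m=B\xi$ and $\widehat G(m)=0$ otherwise.
\end{itemize}
In particular $\operatorname{supp}\widehat G\subset[-M,\infty)^2\cap\Z^2$, so it suffices to show $G\equiv 0$.

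\textbf{Step 3: the first variable.} By Fubini, there is a set $F\subset\T$ with $|F|>0$ such that for each $y_2\in F$ the slice $y_1\mapsto G(y_1,y_2)$ lies in $L^2(\T)$ and vanishes on a set of positive measure. For a.e. $y_2$, this slice has Fourier coefficients $\sum_{m_2}\widehat G(m_1,m_2)e^{2\pi i m_2y_2}$, which vanish for $m_1<-M$. After multiplying by $e^{2\pi i M y_1}$, the slice becomes an $H^2(\mathbb D)$ boundary function. A nonzero $H^2$ function has $\log|g|\in L^1(\T)$, so it cannot vanish on a set of positive measure. Therefore the slice vanishes identically for a.e. $y_2\in F$, and $G=0$ a.e. on $\T\times F$.

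\textbf{Step 4: the second variable.} Fix $k\in\Z$ and set $G_k(y_2):=\int_\T G(y_1,y_2)e^{-2\pi i k y_1}\,dy_1=\sum_{m_2}\widehat G(k,m_2)e^{2\pi i m_2 y_2}$. By Step 3, $G_k=0$ on $F$. Its spectrum lies in $[-M,\infty)$, so the same $H^2$ argument gives $G_k\equiv 0$. This holds for every $k$, so all $\widehat G(k,m_2)$ vanish and $G\equiv0$. Since $B$ is injective on $\Z^2$, every $\widehat f(\xi)$ vanishes, and $f\equiv 0$.

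The main obstacle is Step 1: producing an integer (hence torus-compatible) change of variables that sends the cone into a quadrant. This is exactly where the hypothesis that the opening angle is less than $\pi$ enters, since it makes the dual cone have interior, so it contains two independent lattice vectors. The remaining steps are routine Fubini bookkeeping together with the classical Riesz uniqueness theorem for $H^2$.
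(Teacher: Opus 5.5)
Your proposal is correct and follows essentially the same route as the paper: two linearly independent integer vectors in the interior of the dual cone give an integer matrix whose transpose induces a measure-preserving endomorphism of $\T^2$ that sends the spectrum into a (translated) quadrant, and one-variable $H^2$ uniqueness is then applied in each variable via Fubini. The differences are only cosmetic: you absorb the vertex translation into the shift $M$ instead of first modulating by a lattice point $\eta$, and you treat the second variable through the coefficient functions $G_k$ rather than through horizontal slices.
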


Note that if $\gamma \in L^\infty(\T^2)$ satisfies the GGCC, then $\{\gamma>0\}$ has positive measure, so the hypothesis of Proposition \ref{convex-cone} is satisfied for functions vanishing on $\{\gamma > 0\}$. First, we show how Theorem \ref{main-result-polygon} follows from the above results.

\begin{proof}[Proof of Theorem \ref{main-result-polygon}]
    Suppose, for the sake of contradiction, that Theorem \ref{main-result-polygon} fails. Then, there exists a sequence $R_n \ge 1$, and functions $f_n \in L^2(\T^2)$ with $\operatorname{supp} \widehat{f_n} \subset \Lambda_{R_n}(P)$ and $\|f_n\|_{L^2(\T^2)} = 1$ such that $$\|f_n\sqrt{\gamma}\|^2_{L^2(\T^2)} =  \int_{\T^2} |f_n(x)|^2\, \gamma(x) \xrightarrow{n\to\infty} 0.$$

\medskip
\noindent\textbf{Step 1: Reduction to high frequencies.}
    
    If $\sup_n R_n < \infty$, then all frequencies lie in a bounded subset of $\Z^2$, so the functions $f_n$ lie in a finite dimensional subspace of $L^2(\T^2)$. After passing to a subsequence, we have $f_n \to f$ in $L^2$ with $\|f\|_{L^2(\T^2)} = 1$ by the Bolzano-Weierstrass theorem. As $\|f_n \sqrt{\gamma}\|_{L^2(\T^2)} \to 0$ as $n\to\infty$ and $\gamma \in L^\infty(\T^2)$, we also get $\|f \sqrt{\gamma}\|_{L^2(\T^2)} = 0$. Thus, $f = 0$ a.e. on $\{\gamma > 0\}$. However, a trigonometric polynomial on $\T^2$ cannot vanish on a set of positive measure, so this is a contradiction. Therefore, $R_n \to \infty$ as $n\to\infty$. 

\medskip
\noindent\textbf{Step 2: A counterexample must concentrate near a vertex.}

    Fix $A \ge A_0$ as in Proposition \ref{polygon-without-corners}. Decompose
    $$\Lambda_{R_n}(P) = \Lambda_{R_n}^{(A)}(P) \cup \mathcal{C}^{(A)}_{R_n,1} \cup \ldots \cup\mathcal{C}^{(A)}_{R_n,N},$$
    and accordingly decompose each $f_n$ as
    $$f_n = b_n + \sum_{k=1}^N c_{n,k},$$
    where $\operatorname{supp} \widehat{b_n} \subset \Lambda_{R_n}^{(A)}(P)$ and $\operatorname{supp} \widehat{c_{n,k}} \subset \mathcal{C}_{R_n,k}^{(A)}$ for each $1\le k\le N$. We have 
    $$\|f_n\|^2_{L^2(\T^2)} = \|b_n\|^2_{L^2(\T^2)} + \sum_{k=1}^N \|c_{n,k}\|^2_{L^2(\T^2)}.$$

    We claim that some corner carries positive mass as $n\to\infty$, i.e., $$\|c_{n,k_0}\|_{L^2(\T^2)} \gtrsim 1$$ for some $1\le k_0 \le N$, and we denote the corresponding vertex by $v := v_{k_0}$. On the contrary, suppose $\|c_{n,k}\|_{L^2(\T^2)} \to 0$ as $n\to\infty$ for each $1\le k\le N$. As $\|f_n\|_{L^2(\T^2)} = 1$, it follows that $\|b_n\|_{L^2(\T^2)} \to 1$ as $n\to\infty$. Proposition \ref{polygon-without-corners} applied to $b_n$ gives $$\|b_n\|_{L^2(\T^2)} \lesssim \|b_n \sqrt{\gamma}\|_{L^2(\T^2)},$$ so that $\|b_n \sqrt{\gamma}\|_{L^2(\T^2)} \gtrsim 1$ for all large $n$. However, 
    $$\|b_n \sqrt{\gamma}\|_{L^2(\T^2)} \le \|f_n \sqrt{\gamma}\|_{L^2(\T^2)} + \sum_{k=1}^N \|c_{n,k} \sqrt{\gamma}\|_{L^2(\T^2)} \xrightarrow{n\to\infty} 0,$$
    a contradiction. The claim follows.

\medskip
\noindent\textbf{Step 3: Translate the relevant vertex to the origin.}

    Now, we shall translate this corner in frequency space, such that the translated corner is contained in a bounded set near the origin $(0,0)$, independent of $n$. Choose lattice points $m_n$ closest to $R_n v_{k_0}$ so that $$|m_n - R_n v_{k_0}| \lesssim 1,$$ and define $$g_n(x) := e^{-2\pi i m_n \cdot x}\, f_n(x).$$ Then, $\|g_n\|_{L^2(\T^2)}= 1$ and $\|g_n \sqrt{\gamma}\|_{L^2(\T^2)} \to 0$. Also define $$d_{n,k_0}(x) := e^{-2\pi i m_n \cdot x}\, c_{n,k_0}(x),$$ so that $\operatorname{supp} \widehat{d_{n,k_0}} \subset \operatorname{supp} \widehat{c_{n,k_0}} - m_n$. Then, for each $n$, $\operatorname{supp} \widehat{d_{n,k_0}}$ lies in a bounded set $K$. Consequently, the $d_{n,k_0}$'s lie in a finite dimensional subspace of $L^2(\T^2)$ and we can pass to a subsequence to get $$d_{n,k_0} \to d_{k_0}$$ in $L^2(\T^2).$ We have $\|d_{k_0}\|_{L^2(\T^2)} \gtrsim 1$. 

\medskip
\noindent\textbf{Step 4: Extract a nonzero weak limit.}

    Since $\|g_n\|_{L^2(\T^2)}=1$, we have $g_n \rightharpoonup g$ weakly after passing to a subsequence for some $g\in L^2(\T^2)$. Define the orthogonal projection $\mathcal{F}_K: L^2(\T^2) \to L^2(\T^2)$ by $$(\mathcal{F}_K f)(x) = \sum_{\xi \in K \cap \Z^2} \widehat{f}(\xi) \,e^{2\pi i\xi\cdot x}.$$ Then, we have $\mathcal F_K g_n \rightharpoonup \mathcal F_K g$ weakly in $L^2(\T^2)$. On the other hand, $\mathcal F_K g_n$ is bounded, and belongs to the  finite-dimensional space $\operatorname{span}\{e^{2\pi i\xi\cdot x}:\xi\in K\cap\mathbb Z^2\}.$ Hence, after
passing to a further subsequence, $\mathcal{F}_K g_n \to h$ strongly. Since strong convergence implies weak convergence, and weak limits are unique, we must have $h=\mathcal F_K g$. Finally,
$$ \|\mathcal F_K g_n\|_{L^2(\T^2)}
\ge \|d_{n,k_0}\|_{L^2(\T^2)} \gtrsim 1,$$
and so 
$$\|\mathcal{F}_K\, g\|_{L^2(\T^2)} = \|h\|_{L^2(\T^2)} = \lim_{n\to\infty} \|\mathcal F_K g_n\|_{L^2(\T^2)} \gtrsim 1.$$
Therefore, $g \not\equiv 0$.

\medskip
\noindent\textbf{Step 5: The limit has spectrum in a translated tangent cone.}

    We show that $\operatorname{supp} \widehat{g} \subset C \cap \Z^2$, where $C$ is a closed convex cone with opening angle smaller than $\pi$. Define the cone $$T_v = \R^+ (v_{k_0 -1} - v_{k_0}) + \R^+ (v_{k_0 +1} - v_{k_0})$$ and set $\Gamma := \partial P\setminus (E^+ \cup E^-)$, where $E^+$ and $E^-$ denote the two sides of $P$ meeting at $v_{k_0}$. As $v_{k_0} \notin \Gamma$, we have $\delta := \operatorname{dist}(v, \Gamma) > 0$.
    
    Now fix $\xi \in \Z^2$, and suppose $\widehat{g_n}(\xi) \neq 0$ for infinitely many $n$. Then, $\xi + m_n \in \Lambda_{R_n}(P)$ for infinitely many $n$. For each such $n$, there exists $y_n \in R_n \partial P$ such that $|\xi + m_n - y_n| \le 1$. We claim that $y_n \in R_n E^- \cup R_n E^+$ for sufficiently large $n$. If not, then we have $|y_n - R_n v| \ge R_n \delta$ along a subsequence. However, the triangle inequality yields
    $$|y_n - R_n v| \le |\xi| + |\xi + m_n - y_n| + |m_n - R_n v| \le M_0 + 1 + |\xi|,$$
    which is not possible since the left side blows up with $n \to \infty$ while the right side stays bounded. Thus, $y_n \in R_n E^- \cup R_n E^+$ for all sufficiently large $n$. As a result, $y_n - R_n v \in T_v$ and $$|\xi - (y_n - R_n v)| \le |m_n - R_n v| + |\xi + m_n - y_n| \le M_0 + 1,$$
    so if $\widehat{g_n}(\xi) \ne 0$ for infinitely many $n$, we have $\xi \in T_v + B(0, 1+ M_0) \cap \Z^2$. We can choose $q\in \Z^2$ such that $$T_v + B(0,1+M_0) \subset q + T_v.$$ Therefore, if $\xi \notin (q + T_v) \cap \Z^2$, then $\widehat{g_n}(\xi) = 0$ for sufficiently large $n$. As $g_n \rightharpoonup g$ weakly, this gives $\widehat{g}(\xi) = 0$ for such $\xi$. Consequently, $$\operatorname{supp} \widehat{g} \subset (q + T_v) \cap \Z^2.$$

\begin{figure}[htpb]
\centering
\begin{tikzpicture}[
    >=Latex, 
    font=\small, 
    scale=0.9, 
    every node/.style={inner sep=2pt}
]

\colorlet{polygongray}{gray!20}
\colorlet{cornerblue}{blue!15}
\colorlet{coneorange}{orange!20}
\colorlet{windowbg}{blue!5}
\colorlet{windowborder}{blue!60!black}

\begin{scope}[shift={(0,0)}]
    \node[anchor=west, font=\bfseries] at (-4.5, 3.8) {(a)};
    
    \coordinate (RnV) at (0,0);
    \coordinate (P1) at (-2, 2.5);
    \coordinate (P2) at (-4, 1.5);
    \coordinate (P3) at (-4.5, -1.0);
    \coordinate (P4) at (-1.5, -2.5);
    
    \draw[thick, fill=polygongray, line join=round] 
        (RnV) -- (P1) -- (P2) -- (P3) -- (P4) -- cycle;
    \node at (-3.8, 2.2) {$R_n \partial P$};
    
    \begin{scope}
        \clip (RnV) -- (P1) -- (P2) -- (P3) -- (P4) -- cycle;
        \fill[cornerblue] (RnV) circle (1.2);
    \end{scope}
    \node at (-1.75, 0.7) {$\mathcal C_{R_n,k_0}^{(A)}$};
    
    \draw[thick] (RnV) -- (P1) node[midway, above right] {$R_nE^+$};
    \draw[thick] (RnV) -- (P4) node[midway, below right] {$R_nE^-$};
    
    \draw[dashed, thick] (RnV) circle (0.7);
    \node[fill=white, inner sep=1pt, rounded corners=2pt] at (1, 1) {$B(R_nv, C)$};
    
    \coordinate (mn) at (0.35, -0.3);
    \fill (mn) circle (1.5pt) node[below right, fill=white, inner sep=1pt] {$m_n$};
    
    \fill (RnV) circle (1.5pt) node[above right, fill=white, inner sep=1pt] {$R_n v$};
    
\end{scope}

\draw[->, thick, shorten >=2pt, shorten <=2pt] (1.2, 0) -- (5.2, 0) 
    node[midway, above] {$\lambda \mapsto \lambda - m_n$} 
    node[midway, below, align=center, font=\footnotesize] {modulation by \\ $e^{-2\pi i m_n \cdot x}$};

\begin{scope}[shift={(9.5,0)}]
    \node[anchor=west, font=\bfseries] at (-5, 3.8) {(b)};
    
    \coordinate (q) at (-0.35, 0.3);
    
    \coordinate (DirP) at (-2, 2.5);
    \coordinate (DirM) at (-1.5, -2.5);
    
    \draw[rounded corners=8pt, dashed, thick, draw=windowborder, fill=windowbg] 
        (-3.5, -2.5) rectangle (1.5, 2.5);
    \node[below right, text=windowborder, font=\bfseries] at (-3.5, 2.45) {$K$};
    
    \begin{scope}
        \clip (-4, -3) rectangle (2.5, 3.5);
        \fill[coneorange, opacity=0.7] 
            (q) -- ($(q) + 3*(DirP)$) -- ($(q) + 3*(DirP) + 3*(DirM)$) -- ($(q) + 3*(DirM)$) -- cycle;
    \end{scope}
    
    \draw[thick] (q) -- ($(q) + 0.9*(DirP)$) 
        node[above left, fill=white, inner sep=1pt, rounded corners=2pt] {$v_{k_0+1}-v$};
    \draw[thick] (q) -- ($(q) + 0.9*(DirM)$) 
        node[below left, fill=white, inner sep=1pt, rounded corners=2pt] {$v_{k_0-1}-v$};
    
    \node[orange!80!black, fill=white, inner sep=1pt] at (-2.4, 0.5) {$q+T_v$};
    
    \draw[->] (-4, 0) -- (2.5, 0) node[right] {$\xi_1$};
    \draw[->] (0, -3.5) -- (0, 3.5) node[above] {$\xi_2$};
    
    \fill (q) circle (1.5pt) node[above right, fill=white, inner sep=1pt] {$q$};
    
    \foreach \x in {-3, -2, -1, 0, 1} {
        \foreach \y in {-2, -1, 0, 1, 2} {
            \fill[gray] (\x, \y) circle (1.2pt);
        }
    }
    
    \fill[black] (-1, 0) circle (2.5pt) 
        node[above right, fill=white, inner sep=1pt, rounded corners=2pt] {$\xi$};
    
    \node[align=center, fill=white, inner sep=2pt, rounded corners=2pt] 
        at (-1.0, -4.1) {$\operatorname{supp}\widehat g \subset (q+T_v)\cap\mathbb Z^2$};
        
\end{scope}

\end{tikzpicture}
\caption{Corner blow-up near the selected vertex.}
\label{fig:corner-blowup}
\end{figure}

\medskip
\noindent\textbf{Step 6: Cone uniqueness gives the contradiction.}

    Finally, we show that $g = 0$ a.e. on $\{\gamma > 0\}$. As $g_n \rightharpoonup g$ weakly and $\gamma \in L^\infty(\T^2)$, we have $g_n\sqrt{\gamma} \rightharpoonup g \sqrt{\gamma}$ weakly. As $\|g_n \sqrt{\gamma}\|_{L^2} \to 0$ as $n\to \infty$, we must have $g\sqrt{\gamma} = 0$ a.e. on $\T^2$. It follows that $g = 0$ a.e. on $\{\gamma > 0\}$. Using Proposition \ref{convex-cone}, we have $g \equiv 0$. This contradicts $\|\mathcal{F}_K g\|_{L^2(\T^2)} \gtrsim 1$. 
\end{proof}

We shall now turn our attention to the proof of Proposition \ref{polygon-without-corners}, which requires careful bookkeeping. We begin the discussion here, and prove some helpful lemmas before beginning the proof. 

\begin{lemma}
\label{geometric-lemma-1}
Let $P\subset \mathbb R^2$ be a convex polygon, with $\Gamma_{j,R}^{(A)}$ defined as above. Suppose $S_i$ and $S_j$ are not
parallel. Then there exists $c_P>0$, depending
only on $P$, such that for every $R\ge 1$, every $A\ge 1$, and every $(\xi, \zeta) \in \Gamma_{i,R}^{(A)} \times \Gamma_{j,R}^{(A)}$,
one has
\begin{equation}
\label{eq:lower-bound-prop-corners}
|\xi-\zeta|\ge c_P A- 2.
\end{equation}
Moreover, for every $k\in\mathbb Z^2$,
\begin{equation}
\label{eq:multiplicity-prop-corners}
\#\Bigl\{
(\xi,\zeta)\in
\Gamma_{i,R}^{(A)}\times \Gamma_{j,R}^{(A)}
:
\zeta-\xi=k
\Bigr\}
\lesssim_P 1.
\end{equation}
\end{lemma}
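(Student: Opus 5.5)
We will prove both estimates by reducing everything to the geometry of two non-parallel lines $R\ell_i$, $R\ell_j$, so that nothing depends on $R$. Write each point as a point on the dilated segment plus a small error. For $\xi\in\Gamma_{i,R}^{(A)}$ we have $\xi=p+e$ with $p=\pi_{i,R}\xi\in RS_i$ and $|e|\le 1$, and $p$ lies at distance at least $A$ from both endpoints $Rv_i,Rv_{i+1}$. Similarly $\zeta=q+e'$ with $q\in RS_j$, $|e'|\le1$, and $q$ at distance at least $A$ from $Rv_j,Rv_{j+1}$. Then $|\xi-\zeta|\ge|p-q|-2$, so \eqref{eq:lower-bound-prop-corners} reduces to showing
$$|p-q|\ge c_PA.$$

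\textbf{Lower bound.} We split into two cases.

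\emph{Adjacent sides.} Suppose $S_i,S_j$ share a vertex $v$, say $j=i+1$ and $v=v_{i+1}$. Let $\beta\in(0,\pi)$ be the interior angle at $v$. Then $p=Rv+s\,u$ and $q=Rv+t\,w$, where $u,w$ are the unit vectors along the two sides emanating from $v$, and $s,t\ge A$. Since $u,w$ are linearly independent, $|su-tw|\ge c(\beta)\max(s,t)\ge c(\beta)A$. One way to see this is to take the component orthogonal to $w$, which gives $s\sin\beta$, and symmetrically $t\sin\beta$. So $c=\sin\beta$ works.

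\emph{Non-adjacent sides.} By convexity the closed segments $S_i,S_j$ are disjoint, so $\operatorname{dist}(S_i,S_j)=d_P>0$. Hence $|p-q|\ge Rd_P$. Since $A\le R\cdot\operatorname{diam}(P)$ whenever $\Gamma$ is nonempty (the removed neighborhoods must fit inside the side), we get $Rd_P\ge (d_P/\operatorname{diam}P)A$.

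Taking $c_P$ to be the minimum of these constants over all pairs of sides gives \eqref{eq:lower-bound-prop-corners}.

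\textbf{Multiplicity.} Fix $k$. If $\zeta-\xi=k$, then $q-p=k+e-e'$, so
$$q-p\in B(k,2),$$
where $p\in R\ell_i$ and $q\in R\ell_j$. Parametrize $p=a+s u$ and $q=b+tw$ with $u,w$ non-parallel. The linear map $(s,t)\mapsto tw-su$ is invertible with inverse norm bounded by $1/|\sin\angle(u,w)|$. The preimage of $B(k-(b-a),2)$ therefore has diameter $\lesssim_P 1$, so $p$ ranges over a segment of length $\lesssim_P1$. Consequently $\xi\in p+B(0,1)$ ranges over a set of diameter $\lesssim_P1$, which contains $\lesssim_P1$ lattice points. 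Since $\zeta=\xi+k$ is then determined by $\xi$, the number of pairs is $\lesssim_P 1$, proving \eqref{eq:multiplicity-prop-corners}.

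\textbf{Main obstacle.} The main point requiring care is the non-adjacent case of the lower bound. Naively it needs $Rd_P\gtrsim A$, which relies on $\Gamma_{i,R}^{(A)}$ being empty unless $2A\le R|S_i|$; I would state this reduction explicitly. Everything else is uniform in $R$, because only the angles between sides and the distances between sides enter the constants.
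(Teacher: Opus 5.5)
Your proof is correct and follows essentially the same route as the paper. You project to the dilated sides and lose $2$ by the triangle inequality, then split into adjacent and non-adjacent sides; in the non-adjacent case you use that $R|S_i|\ge 2A$ whenever $\Gamma_{i,R}^{(A)}\neq\varnothing$, and you bound the multiplicity by the bounded diameter of the intersection of two bounded-width strips around non-parallel lines. The differences are minor: in the adjacent case you take the component orthogonal to one side, giving $\sin\beta$, where the paper uses the law of cosines, giving $2\sin(\beta/2)$; and the inverse of $(s,t)\mapsto tw-su$ actually has norm up to $\sqrt{2}/|\sin\angle(u,w)|$ rather than $1/|\sin\angle(u,w)|$, which does not affect the $\lesssim_P 1$ conclusion.
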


\begin{proof}
Since $\Gamma_{j,R}^{(A)}\subset \widetilde\Gamma_{j,R}^{(A)}$, it suffices to prove the estimates for the $\widetilde\Gamma_{j,R}^{(A)}$s. Let $\xi\in \widetilde\Gamma_{i,R}^{(A)}$ and $\zeta\in \widetilde\Gamma_{j,R}^{(A)}.$ Define $x:=\pi_{i,R}\xi\in RS_i$ and $y:=\pi_{j,R}\zeta\in RS_j.$ Then $|\xi-x|\le 1$, $|\zeta-y|\le 1$, and $x,y$ lie at tangential distance at least $A$ from the endpoints of
their respective dilated sides.

We first prove \eqref{eq:lower-bound-prop-corners}. Since $S_i$ and $S_j$ are not parallel, there are two cases to consider.

\begin{itemize}
    \item We first consider the case where $S_i$ and $S_j$ are adjacent. Let $v$ be their common vertex. We have $a := |x-Rv|\ge A$ and $b:= |y-Rv|\ge A.$ Let $\theta_{ij} > 0$ denote the angle between the two side directions. Set
$$ \theta_P
:= \min_{\substack{S_i,S_j\ \mathrm{adjacent}}}
\theta_{ij}
>0.$$
By the law of cosines, 
\begin{align*}
|x-y|^2 &= a^2+b^2-2ab\cos\theta_{ij} = (a-b)^2 + 2ab(1-\cos\theta_{ij})\\ &\ge 2A^2(1- \cos\theta_{ij}) = 4A^2\sin^2\left(\frac{\theta_{ij}}{2}\right).    
\end{align*}
Thus, in this case, $$|x-y|\ge c_{\mathrm{adj}}\, A,$$ where $c_{\mathrm{adj}} :=2\sin\left(\frac{\theta_P}{2}\right)>0$.

\item If $S_i$ and $S_j$ are not adjacent, then $\operatorname{dist}(S_i,S_j) =:d_{ij}>0.$ Thus,  $\operatorname{dist}(RS_i,RS_j)=R d_{ij} > 0$. Assume $\Gamma_{i,R}^{(A)} \ne \varnothing$ and $\Gamma_{j,R}^{(A)} \ne \varnothing$. Then, $R|S_i|\ge 2A$ and $R|S_j|\ge 2A.$
Hence, $R\ge \frac{2A}{\max_m |S_m|}$, and 
$$|x-y| \ge Rd_{ij} \ge \frac{2A\, d_{ij}}{\max_m |S_m|}.$$
Therefore, in this case, $$|x-y|\ge c_{\mathrm{sep}}A,$$
where
$$ c_{\mathrm{sep}}
:= \min_{\substack{i,j\\ S_i,S_j\\\text{non-adjacent, non-parallel}}}
\frac{2d_{ij}}{\max_m |S_m|} > 0.$$ 
\end{itemize}

Now set $c_P:=\min\{c_{\mathrm{adj}},c_{\mathrm{sep}}\}>0.$  Then in both cases, $|x-y|\ge c_PA.$ By the triangle inequality,
$$|\xi-\zeta| \ge |x-y|-|\xi-x|-|\zeta-y|
\ge c_P A-2.$$

It remains to prove \eqref{eq:multiplicity-prop-corners}. Fix $k\in\mathbb Z^2$ and suppose $\zeta-\xi=k$. Then, $$\xi\in \Gamma_{i,R}^{(A)}\cap \bigl(\Gamma_{j,R}^{(A)}-k\bigr).$$ The set $\Gamma_{i,R}^{(A)}$ is contained in a strip of width 2 around the line $R\ell_i$, and $\Gamma_{j,R}^{(A)}-k$ is contained
in a strip of width 2 around the line $R\ell_j-k$. Since $S_i$ and $S_j$ are not parallel, the lines $\ell_i$ and $\ell_j$
meet at an angle bounded below by a positive constant depending only on $P$. Therefore the intersection of these two bounded-width strips has diameter bounded by a constant depending only on $P$. Thus, 
$$\#\left\{ \xi\in \mathbb Z^2: \xi\in \Gamma_{i,R}^{(A)}
\cap \bigl(\Gamma_{j,R}^{(A)}-k\bigr) \right\} \lesssim_P 1.$$ Equivalently,
$$ \#\left\{ (\xi,\zeta)\in \Gamma_{i,R}^{(A)}\times \Gamma_{j,R}^{(A)}
: \zeta-\xi=k \right\} \lesssim_P 1.$$
\end{proof}

Next, we discuss an almost-orthogonality statement.

\begin{lemma}
\label{orthogonality-lemma-1}
Let $\gamma\in L^\infty(\mathbb T^2)$. Let $\mathcal C_1,\ldots,\mathcal C_M$ be the equivalence classes of sides of $P$ under the relation $S_i\sim S_j$ if and only if $S_i$ is parallel to $S_j$. For each equivalence class $\mathcal C_\alpha$, define
$$\Gamma_{\alpha,R}^{(A)} := \bigcup_{S_j\in\mathcal C_\alpha}\Gamma_{j,R}^{(A)}.$$
Then there exists a function
$$
\ep(A)\to 0
\qquad\text{as }A\to\infty
$$
such that the following estimates hold uniformly in $R\ge 1$.
\begin{enumerate}
    \item Let $S_i$ and $S_j$ be two
    non-parallel sides of $P$. If 
    $\operatorname{supp}\widehat u\subset \Gamma_{i,R}^{(A)}$ and $\operatorname{supp}\widehat v\subset \Gamma_{j,R}^{(A)},$
    then
\begin{equation}
\label{eq: orthogonality}
\left| \int_{\mathbb T^2}\gamma(x)u(x)\overline{v(x)}\,dx
\right| \lesssim_P \ep(A)\, 
\|u\|_{L^2(\mathbb T^2)}\, \|v\|_{L^2(\mathbb T^2)}.
\end{equation}

    \item If $\operatorname{supp}\widehat u\subset \Gamma_{\alpha,R}^{(A)}$ and $\operatorname{supp}\widehat v\subset \Gamma_{\beta,R}^{(A)}\,$ for some $\alpha\ne\beta$, 
    then
\begin{equation}
\label{eq:class-orthogonality}
\left| \int_{\mathbb T^2}\gamma(x)u(x)\overline{v(x)}\,dx \right|
\lesssim_P \ep(A)\,\|u\|_2\, \|v\|_2.
\end{equation}
\end{enumerate}

\end{lemma}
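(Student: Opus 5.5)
Expanding in Fourier series turns the pairing into a bilinear sum against $\widehat\gamma$:
$$\int_{\mathbb T^2}\gamma\, u\,\overline v\,dx=\sum_{\xi\in\Gamma_{i,R}^{(A)}}\sum_{\zeta\in\Gamma_{j,R}^{(A)}}\widehat u(\xi)\,\overline{\widehat v(\zeta)}\,\widehat\gamma(\zeta-\xi).$$
Lemma \ref{geometric-lemma-1} gives two facts about this sum. First, only differences $k=\zeta-\xi$ with $|k|\ge c_PA-2$ can occur. Second, each such $k$ is realized by at most $O_P(1)$ pairs $(\xi,\zeta)$. So I regard the sum as a bilinear form with kernel $K(\xi,\zeta)=\widehat\gamma(\zeta-\xi)\mathbf 1_{|\zeta-\xi|\ge c_PA-2}$, restricted to $\Gamma_{i,R}^{(A)}\times\Gamma_{j,R}^{(A)}$.

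For part (1), I bound this form by Cauchy--Schwarz in the spirit of the annulus argument. Write $B(k):=\sum_{\zeta-\xi=k}|\widehat u(\xi)||\widehat v(\zeta)|$. The multiplicity bound gives $B(k)^2\lesssim_P\sum_{\zeta-\xi=k}|\widehat u(\xi)|^2|\widehat v(\zeta)|^2$, and summing over $k$ gives $\sum_k B(k)^2\lesssim_P\|u\|_2^2\|v\|_2^2$. Hence
$$\Bigl|\int\gamma u\bar v\Bigr|\le\sum_{|k|\ge c_PA-2}|\widehat\gamma(k)|B(k)\lesssim_P\Bigl(\sum_{|k|\ge c_PA-2}|\widehat\gamma(k)|^2\Bigr)^{1/2}\|u\|_2\|v\|_2 .$$
I then define $\ep(A):=\bigl(\sum_{|k|\ge c_PA-2}|\widehat\gamma(k)|^2\bigr)^{1/2}$, which is the $L^2$ Fourier tail of $\gamma$. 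Since $\gamma\in L^\infty(\mathbb T^2)\subset L^2(\mathbb T^2)$, Plancherel shows $\ep(A)\to0$ as $A\to\infty$. This $\ep(A)$ depends only on $\gamma$ and $P$, not on $R$, which gives the required uniformity in $R$.

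For part (2), I decompose $u=\sum_{S_i\in\mathcal C_\alpha}u_i$ and $v=\sum_{S_j\in\mathcal C_\beta}v_j$ according to the disjoint pieces $\Gamma_{i,R}^{(A)}$. Because $\alpha\ne\beta$, every pair $(S_i,S_j)$ in this decomposition is non-parallel, so part (1) applies to each pair. There are at most $N^2$ pairs, and disjointness of the Fourier supports gives $\|u_i\|_2\le\|u\|_2$ and $\|v_j\|_2\le\|v\|_2$. Summing the pairwise bounds then yields \eqref{eq:class-orthogonality} with a constant depending on $N$, hence only on $P$.

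The only delicate point is ensuring that the multiplicity bound \eqref{eq:multiplicity-prop-corners} is uniform in $R$ and $A$. That uniformity is exactly what lets the Cauchy--Schwarz step close using nothing more than the $L^2$ tail of $\widehat\gamma$, with no Sobolev regularity of $\gamma$. Lemma \ref{geometric-lemma-1} already supplies this uniform bound, so I do not expect a real obstacle here.
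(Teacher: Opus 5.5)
Your proof is correct and follows the paper's argument. Both use the Fourier expansion, the separation and multiplicity bounds of Lemma \ref{geometric-lemma-1}, the same choice $\ep(A)=\bigl(\sum_{|k|\ge c_PA-2}|\widehat\gamma(k)|^2\bigr)^{1/2}$, and the same reduction of part (2) to part (1) via side pieces. The only cosmetic difference is the Cauchy--Schwarz step: the paper applies it once over all pairs $(\xi,\zeta)$ and then bounds $\sum_{\xi,\zeta}|\widehat\gamma(\zeta-\xi)|^2$ by multiplicity, while you first group by $k=\zeta-\xi$ as in the annulus argument, which gives the identical bound.
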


\begin{proof} We prove (\ref{eq: orthogonality}). Set $E=\Gamma_{i,R}^{(A)}$ and $F=\Gamma_{j,R}^{(A)}$. We have the identity
$$
\int_{\mathbb T^2}\gamma(x) u(x)\overline{v(x)}\,dx
= \sum_{\xi\in E}\sum_{\zeta\in F}
\widehat u(\xi)\overline{\widehat v(\zeta)}
\,\widehat\gamma(\zeta-\xi).$$
Hence, by the Cauchy-Schwarz inequality,
$$\left|\int_{\mathbb T^2}\gamma(x) u(x)\overline{v(x)}\,dx\right|
\le \left(\sum_{\xi\in E,\zeta\in F}
|\widehat\gamma(\zeta-\xi)|^2
\right)^{1/2} \|u\|_2\|v\|_2.$$
By Lemma \ref{geometric-lemma-1}, every difference
$k=\zeta-\xi$ satisfies $|k|\ge c_P A-2$, and each such \(k\) occurs with multiplicity $\lesssim_P 1$. Therefore
$$ \sum_{\xi\in E,\zeta\in F} |\widehat\gamma(\zeta-\xi)|^2
\lesssim_P \sum_{|k|\ge c_P A-2} |\widehat\gamma(k)|^2.$$
Since $\gamma\in L^\infty(\mathbb T^2)\subset L^2(\mathbb T^2)$, we have $\widehat\gamma\in \ell^2(\mathbb Z^2)$. Thus the right-hand side tends to zero as \(A\to\infty\), independent of $R$. Defining $$ \ep(A) := \left(\sum_{|k|\ge c_P A-2} |\widehat\gamma(k)|^2 \right)^{1/2},$$
we obtain the desired estimate.

Note that every equivalence class $\mathcal{C}_\alpha$ in a convex polygon satisfies $\#\mathcal{C}_\alpha \le 2$. The second estimate (\ref{eq:class-orthogonality}) follows from (\ref{eq: orthogonality}) by decomposing $u$ and $v$ into their side pieces inside the two parallel classes. Since the polygon has finitely many sides, summing the pairwise estimates and applying Cauchy-Schwarz in the finite sums gives \eqref{eq:class-orthogonality}, with the same $\ep(A)$ up to a constant depending only on $P$.
\end{proof}

Next, we turn to the proof of Proposition \ref{polygon-without-corners}.

\begin{proof}[Proof of Proposition \ref{polygon-without-corners}]
    Let $\mathcal C_1,\ldots,\mathcal C_M$ be the equivalence classes of sides as in Lemma \ref{orthogonality-lemma-1}. For $A\ge 1$ and $R\ge 1$, define
    $$ \Gamma_{\alpha,R}^{(A)} := \bigcup_{S_j\in\mathcal C_\alpha}\Gamma_{j,R}^{(A)}.$$
    Then,
    $$ \Lambda_R^{(A)}(P) = \bigcup_{\alpha=1}^M \Gamma_{\alpha,R}^{(A)}.$$
    Let $g$ be a trigonometric polynomial satisfying $\operatorname{supp}\widehat g\subset \Lambda_R^{(A)}(P).$ Decompose $g$ as $g=\sum_{\alpha=1}^M g_\alpha$ where $\operatorname{supp}\widehat{g_\alpha} \subset \Gamma_{\alpha,R}^{(A)}.$ Since the frequency supports are disjoint,
\begin{equation}
\label{eq:orthogonality-equivalence-classes}
\|g\|_{L^2(\mathbb T^2)}^2
=
\sum_{\alpha=1}^M
\|g_\alpha\|_{L^2(\mathbb T^2)}^2.
\end{equation}
We first prove the observability estimate for each equivalence class $\mathcal{C}_\alpha$, where $\#\mathcal{C}_\alpha \le 2$ as noted earlier. Therefore, $\Gamma_{\alpha,R}^{(A)}$ is contained in the union of at most two strips of uniformly bounded width, both parallel to $\nu_\alpha^\perp$. By Theorem \ref{two-strip-theorem} applied in the direction $\nu_\alpha$, there exists a constant $C_\alpha<\infty$, independent of $R$ and $A$, such that
\begin{equation*}
    \|g_\alpha\|_{L^2(\mathbb T^2)}^2
    \le C_\alpha \int_{\mathbb T^2}|g_\alpha|^2\gamma.
\end{equation*}
Set $C_0:=\max_{1\le \alpha\le M} C_\alpha.$ Then 
\begin{equation}
    \label{eq:observability-per-class}
    \|g_\alpha\|_{L^2(\mathbb T^2)} \le C_0^{1/2} \left(\int_{\mathbb T^2}|g_\alpha|^2\gamma \right)^{1/2}
\end{equation}
for every $\alpha$. Now put $X_\alpha := \left(\int_{\mathbb T^2}|g_\alpha|^2\gamma \right)^{1/2}.$
Expanding the weighted norm of $g$ gives
$$ \int_{\mathbb T^2}|g|^2\gamma = \sum_{\alpha=1}^M X_\alpha^2 + 2\operatorname{Re} \sum_{1\le \alpha<\beta\le M} \int_{\mathbb T^2}\gamma g_\alpha\overline{g_\beta}.$$
For $\alpha\ne \beta$, Lemma \ref{orthogonality-lemma-1} gives
$$\left|\int_{\mathbb T^2}\gamma(x)\,  g_\alpha(x)\,\overline{g_\beta}(x)\, dx \right| \lesssim_P \ep(A)\, \|g_\alpha\|_2\, \|g_\beta\|_2.$$ 
Then \eqref{eq:observability-per-class} gives
$$\left|\int_{\mathbb T^2}\gamma(x)\,  g_\alpha(x)\,\overline{g_\beta}(x)\, dx \right| \lesssim_P \ep(A) \,C_0\, X_\alpha X_\beta.$$ 
Therefore,
$$ \int_{\mathbb T^2}|g|^2\gamma \ge \sum_{\alpha=1}^M X_\alpha^2
- 2\ep(A)\, C_0\, C_P \sum_{1\le \alpha<\beta\le M}X_\alpha X_\beta,$$
for some $C_P > 0$. Since 
$$ 2\sum_{1\le \alpha<\beta\le M}X_\alpha X_\beta
\le (M-1)\sum_{\alpha=1}^M X_\alpha^2,$$
we get
$$ \int_{\mathbb T^2}|g|^2\gamma \ge \left(1-\ep(A)\,C_0\, C_P\,(M-1)\right) \sum_{\alpha=1}^M X_\alpha^2.$$
Since $\ep(A)\to 0$ as $A\to\infty$, choose $A_0\ge 1$ large enough so that $\ep(A)\, C_0\, C_P\, (M-1)\le \frac12$ for all $A \ge A_0$. 
Then, $$\sum_{\alpha=1}^M X_\alpha^2 \le 2\int_{\mathbb T^2}|g|^2\gamma,$$
for every $A\ge A_0$. Combining this estimate with \eqref{eq:orthogonality-equivalence-classes} and \eqref{eq:observability-per-class}, we conclude that
$$ \|g\|_{L^2(\mathbb T^2)}^2 = \sum_{\alpha=1}^M
\|g_\alpha\|_{L^2(\mathbb T^2)}^2 \le C_0
\sum_{\alpha=1}^M \int_{\mathbb T^2}|g_\alpha|^2\gamma = 
C_0\sum_{\alpha=1}^M X_\alpha^2 \le 2C_0 \int_{\mathbb T^2}|g|^2\gamma.$$
Thus, Proposition \ref{polygon-without-corners} holds with $C_A(P,\gamma):=2C_0$ for every $A\ge A_0$. The constant is independent of $R$ as desired.
\end{proof}

\subsection{Uniqueness for Cone-Supported Fourier Series}
Finally, we discuss the proof of Proposition \ref{convex-cone}. The key ingredient is the following Hardy uniqueness result.

\begin{theorem}
\label{hardy-uniqueness-2}
    Let $F \in H^2(\D^2)$, and $F^* \in L^2(\T^2)$ denote its boundary function. If there exists a measurable set $E \subset \T^2$ with $|E| > 0$ such that $F^*$ vanishes on $E$, then $F = 0$ a.e. on $\D^2$. 
\end{theorem}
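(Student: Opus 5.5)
Our plan is to reduce Theorem \ref{hardy-uniqueness-2} to the one-variable theorem of F.~and M.~Riesz: a nonzero $H^2(\D)$ function has boundary values that vanish on at most a null set. The reduction goes through Fubini and a slicing argument.

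We write $F(z_1,z_2)=\sum_{m,n\ge0}a_{mn}z_1^mz_2^n$ with $\sum|a_{mn}|^2<\infty$. The boundary function is then $F^*(e^{2\pi i s},e^{2\pi i t})=\sum a_{mn}e^{2\pi i(ms+nt)}$, identifying $\T^2$ with $[0,1)^2$. Suppose $F\not\equiv0$. Then some coefficient is nonzero, so there is an index $n_0$ for which the one-variable series $G_{n_0}(z_1):=\sum_m a_{mn_0}z_1^m$ is a nonzero element of $H^2(\D)$.

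The main step is the slicing. For almost every $s\in[0,1)$, Fubini and Parseval show that the slice $t\mapsto F^*(e^{2\pi is},e^{2\pi it})$ lies in $L^2$. Its Fourier coefficients are $b_n(s):=G_n^*(e^{2\pi is})$ for $n\ge0$, where $G_n(z_1)=\sum_m a_{mn}z_1^m$. These coefficients vanish for $n<0$. Hence the slice is the boundary value of the $H^2(\D)$ function $H_s(z_2):=\sum_n b_n(s)z_2^n$. Justifying this requires the identity $\sum_n\int|b_n(s)|^2ds=\|F^*\|_2^2<\infty$, so that $\sum_n|b_n(s)|^2<\infty$ for a.e.\ $s$. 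It also requires checking that the $L^2(\T^2)$ function $F^*$ agrees a.e.\ with its iterated expansion. This follows from Plancherel in both variables.

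Next we apply the Riesz theorem fibrewise. Fubini gives a set $S\subset[0,1)$ of positive measure such that, for each $s\in S$, the slice $E_s=\{t:(s,t)\in E\}$ has positive measure. For $s\in S$ (outside the exceptional null set), $H_s^*$ vanishes on $E_s$, so the F.~and M.~Riesz theorem forces $H_s\equiv0$. In particular $b_{n_0}(s)=G_{n_0}^*(e^{2\pi is})=0$ for almost every $s\in S$. Thus $G_{n_0}^*$ vanishes on a set of positive measure, and the Riesz theorem applied once more gives $G_{n_0}\equiv0$. This contradicts the choice of $n_0$, so every coefficient $a_{mn}$ vanishes and $F=0$ on $\D^2$.

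We expect the main obstacle to be only the measure-theoretic bookkeeping: making sure that the a.e.-defined slices of $F^*$ really coincide with the boundary functions of the $H_s$, and choosing representatives consistently. We would handle this by working throughout with the $L^2$ expansions and discarding countably many null sets, one for each $n$. For Proposition \ref{convex-cone} the plan is then to apply a lattice linear change of variables that maps the cone $C_0$ into a quadrant, after a translation by a modulation. This is possible because the opening angle is less than $\pi$. One approximates $C_0$ from outside by a rational cone and passes to a finite-index cover of $\T^2$, which preserves positivity of measure. This reduces Proposition \ref{convex-cone} to Theorem \ref{hardy-uniqueness-2}.
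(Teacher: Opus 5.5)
Your proof is correct and follows the paper's argument. Both expand $F$ in a double power series and use $\sum_n\int|\phi_n|^2=\|F^*\|_2^2<\infty$ to see that almost every vertical slice of $F^*$ is an $H^2(\T)$ boundary function; your $b_n=G_n^*$ are exactly the paper's $\phi_n$. Both then apply the one-variable uniqueness theorem on the positive-measure fibres $E_s$.

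The only difference is the second application of that theorem. The paper re-runs the slicing with the variables interchanged and applies the theorem to the horizontal slices of $F^*$, which vanish on $E_1'$. You instead apply it directly to the coefficient functions $G_n^*$: these vanish on $S$ because each vanishing vertical slice has all its Fourier coefficients zero. This is a slightly more economical finish that avoids the second Fubini step.
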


This follows from the well-known one-variable version through a Fubini-type argument.

\begin{theorem}[\cite{hoffman1962banach}]
\label{hardy-uniqueness-1}
    Let $h\in H^2(\D)$ and $h^* \in L^2(\T)$ denote its boundary function. If there exists a measurable set $E \subset \T$ with $|E| > 0$ such that $h^*(\xi) = 0$ for a.e. $\xi \in E$, then $h = 0$ a.e. on $\D$. 
\end{theorem}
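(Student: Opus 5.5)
Since the result is classical, the plan is the standard argument through Jensen's inequality: a nonzero $H^2(\mathbb D)$ function has a log-integrable boundary function, and log-integrability is impossible if $h^*$ vanishes on a set of positive measure. We argue by contradiction. Suppose $h\not\equiv 0$ while $h^*=0$ a.e. on $E$ with $|E|>0$.

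First we normalize so that $h(0)\neq 0$. If $h$ vanishes to order $k$ at the origin, replace $h$ by $z^{-k}h$. This function is still in $H^2(\mathbb D)$: its Taylor coefficients are just shifted, so the $\ell^2$ norm is unchanged. Its boundary function is $e^{-ikt}h^*$, which still vanishes a.e. on $E$.

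Next we apply Jensen's inequality on circles. Since $\log|h|$ is subharmonic, for every $0<r<1$ we have
$$\log|h(0)|\le \frac1{2\pi}\int_0^{2\pi}\log|h(re^{it})|\,dt.$$
Write $h_r(t):=h(re^{it})$ and split $\log=\log^+-\log^-$. We use two standard facts about $H^2$: $h_r\to h^*$ in $L^2(\mathbb T)$, and, passing to a sequence $r_n\to1$, $h_{r_n}\to h^*$ a.e. (radial limits). For the positive part, the elementary bound $|\log^+a-\log^+b|\le|a-b|$ gives $\log^+|h_{r_n}|\to\log^+|h^*|$ in $L^1$. For the negative part, Fatou's lemma gives
$$\int\log^-|h^*|\le\liminf_n\int\log^-|h_{r_n}|.$$
Combining the two,
$$\int\log|h^*|\ \ge\ \limsup_n\int\log|h_{r_n}|\ \ge\ 2\pi\log|h(0)|>-\infty.$$
Hence $\log|h^*|\in L^1(\mathbb T)$.

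This contradicts $h^*=0$ on $E$: there $\log|h^*|=-\infty$ on a set of positive measure, so $\int\log^-|h^*|=+\infty$. Therefore $h\equiv0$, and since $h$ is holomorphic this holds everywhere on $\mathbb D$, in particular a.e.

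The only delicate step is the passage $r\to1$ inside the logarithm, because $\log$ is unbounded below and plain $L^2$ convergence does not control it. The split into $\log^+$ and $\log^-$ handles this. The Lipschitz bound on $\log^+$ gives convergence of the positive parts, and Fatou's lemma for $\log^-$ needs only a.e. convergence, which is why we pass to a subsequence $r_n\to1$.
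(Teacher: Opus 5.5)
Your proof is correct. It is the standard argument that a nonzero $h\in H^2(\D)$ has $\log|h^*|\in L^1(\T)$, obtained from Jensen's inequality on the circles $|z|=r$. You handle the passage $r\to1$ properly: the $1$-Lipschitz bound gives $L^1$ convergence of $\log^+|h_{r_n}|$, and Fatou's lemma along an a.e.\ convergent subsequence handles $\log^-$. The paper does not prove this statement; it only cites Hoffman, and your argument is the classical proof of the cited fact.
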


\begin{proof}[Proof of Theorem \ref{hardy-uniqueness-2}]
Since $F$ is analytic, we can write
$$
F(z_1,z_2)
=
\sum_{m,n\ge0}c_{m,n}z_1^mz_2^n,
$$
and $F\in H^2(\D^2)$ means
$$
\sum_{m,n\ge0}|c_{m,n}|^2<\infty.
$$
Set $z_1=e^{2\pi i x_1}$ and $z_2=e^{2\pi i x_2}$. The boundary function is
$$
F^*(x_1,x_2)
=
\sum_{m,n\ge0}c_{m,n}e^{2\pi i(mx_1+nx_2)},
$$
with convergence in $L^2(\T^2)$. In particular, $F^*\in L^2(\T^2)$.

For each $n\ge0$, define
$$ \phi_n(x_1)
:= \sum_{m\ge0}c_{m,n}e^{2\pi i m x_1}. $$
Then $\phi_n\in H^2(\T)$ and by Plancherel,
$$\|\phi_n\|_{L^2(\T)}^2
= \sum_{m\ge0}|c_{m,n}|^2.$$
Therefore
$$\int_{\T}\sum_{n\ge0}|\phi_n(x_1)|^2\,dx_1
= \sum_{n\ge0}\|\phi_n\|_{L^2(\T)}^2
= \sum_{m,n\ge0}|c_{m,n}|^2 <\infty.
$$
It follows that for a.e. $x_1\in\T$,
$$
\sum_{n\ge0}|\phi_n(x_1)|^2<\infty.
$$
For every such $x_1$, the slice $x_2\mapsto F^*(x_1,x_2)$
belongs to $H^2(\T)$. Now assume that $F^*=0$ on a positive measure subset $E\subset\T^2$. For $x_1\in\T$, define
$E_{x_1} := \{x_2\in\T:(x_1,x_2)\in E\}.$ By Fubini's theorem,
$|E| = \int_{\T}|E_{x_1}|\,dx_1.$
Hence the set $E_1 := \{x_1\in\T:|E_{x_1}|>0\}$
has positive measure.

Fubini's theorem also implies that $F^*(x_1,x_2)=0$ for a.e. $x_2\in E_{x_1}$ and for a.e. $x_1\in E_1$. Let $E_1'\subset E_1$ be the set of those $x_1$ for which both of the following hold:
the slice $x_2\mapsto F^*(x_1,x_2)$ belongs to $H^2(\T)$, and
$F^*(x_1,x_2)=0$ for a.e. $x_2\in E_{x_1}$. By the preceding paragraphs, $|E_1'|>0$. For every $x_1\in E_1'$, the one-variable function $x_2\mapsto F^*(x_1,x_2)$ belongs to $H^2(\T)$ and vanishes on the positive-measure set $E_{x_1}$. By Theorem \ref{hardy-uniqueness-1}, it follows that
$F^*(x_1,x_2)=0$ for a.e. $x_2\in\T$. Thus $F^*(x_1,x_2)=0$ for a.e.
$(x_1,x_2)\in E_1'\times\T$. By the same coefficient argument with the two variables interchanged, for a.e. $x_2\in\T$, the slice $x_1\mapsto F^*(x_1,x_2)$ belongs to $H^2(\T)$.

Since $F^*=0$ for a.e. point of $E_1'\times\T$, Fubini's theorem gives that
$F^*(x_1,x_2)=0$ for a.e. $x_1\in E_1'$ and for a.e. $x_2\in\T$. For a.e. such $x_2$, the horizontal slice $x_1\mapsto F^*(x_1,x_2)$ belongs to $H^2(\T)$. Since $|E_1'|>0$, another application of Theorem \ref{hardy-uniqueness-1} gives $F^*(x_1,x_2)=0$ for a.e. $x_1\in\T$ and for a.e. $x_2\in\T$. Therefore $F^*=0$ a.e. on $\T^2$.

Finally, since
$$
F^*(x_1,x_2)
=
\sum_{m,n\ge0}c_{m,n}e^{2\pi i(mx_1+nx_2)}
$$
in $L^2(\T^2)$, all Fourier coefficients $c_{m,n}$ vanish. Hence, $F\equiv0$ in $H^2(\D^2)$.
\end{proof}

\begin{proof}[Proof of Proposition \ref{convex-cone}]

    It suffices to prove the result when the vertex of the cone is the origin. Indeed, suppose \(C=a+C_0\) where \(C_0\) is a closed convex cone with vertex at \(0\), and opening angle \(<\pi\). As $C_0$ is the intersection of two closed halfspaces, we can write $$
C_0=\{x\in\mathbb R^2:\ell_1(x)\ge 0,\ \ell_2(x)\ge 0\},$$
where $\ell_1, \ell_2$ are linearly independent linear functionals. Then,
$$
C=\{x\in\mathbb R^2:\ell_1(x)\ge \ell_1(a),\ \ell_2(x)\ge \ell_2(a)\}.$$

Choose \(\eta\in\mathbb Z^2\) such that $\ell_1(\eta)\le \ell_1(a)$ and $\ell_2(\eta)\le \ell_2(a).$ Then, for every \(\xi\in C\cap\mathbb Z^2\), we have $\ell_j(\xi-\eta)\ge 0$ for $j = 1,2$ and therefore \(\xi-\eta\in C_0\). Hence, $$(C\cap\mathbb Z^2)-\eta\subset C_0\cap\mathbb Z^2.$$ The reduction to the vertex-at-origin case from here is immediate.
Thus, we consider $f\in L^2(\T^2)$ with $\operatorname{supp} \widehat{f} \subset C \cap \Z^2$ such that $f = 0$ a.e. on a positive measure set $E\subset \T^2$. Since the opening angle of the cone $C$ is smaller than $\pi$, the dual cone $$C^* := \{x \in \R^2: x\cdot \xi \ge 0 \text{ for all } \xi \in C\},$$ has non-empty interior. Choose two linearly independent vectors $u,v \in \operatorname{int} C^*$. Without loss of generality, we may assume $u,v\in \Z^2$: this is possible because $\Q^2$ is dense in $\R^2$ and we can clear the common denominator to get integer vectors. Define the matrix $$A = \begin{pmatrix}
        u_1 & u_2\\ v_1 & v_2
    \end{pmatrix}.$$
    Then, for $\xi \in \Z^2$, we have $A\xi = \begin{pmatrix}
        u\cdot \xi \\ v\cdot \xi
    \end{pmatrix} \in \Z^2$. If there exists $\xi \in C \cap \Z^2$ such that $A\xi = (m,n)$, then set $c_{m,n} = \widehat{f}(\xi)$. If there is no such $\xi$, let $c_{m,n} = 0$. The $c_{m,n}$'s are well-defined since $A$ is injective. As $f\in L^2(\T^2)$, we have $\sum_{m,n\ge 0} |c_{m,n}|^2 = \sum_{\xi\in C\cap \Z^2} |\widehat{f}(\xi)|^2 < \infty$. This gives us a function
    $$F(z_1,z_2) := \sum_{m,n\ge 0} c_{m,n}\, z_1^m z_2^n = \sum_{\xi\in C \cap \Z^2} \widehat{f}(\xi)\, z_1^{u\cdot \xi} z_2^{v\cdot \xi} \in H^2(\D^2).$$
    Let $F^* \in L^2(\T^2)$ be the boundary function of $F$, i.e.,
    $$F^*(y_1,y_2) = \sum_{\xi\in C\cap \Z^2} \widehat{f}(\xi)\, e^{2\pi i ((u\cdot \xi)y_1 + (v\cdot \xi)y_2)} = \sum_{\xi\in C\cap \Z^2} \widehat{f}(\xi)\, e^{2\pi i \xi \cdot A^T y},$$
    for $0\le y_1,y_2 < 1$. Thus, $F^*(y) = f(A^T y)$. 

    Consider the map $\Phi: \T^2 \to \T^2$ given by $\Phi(y) = A^T y$. By the uniqueness of the Haar probability measure, we get $|\Phi^{-1}(E)| = |E| > 0$. Since $f = 0$ a.e. on $E$, we get $F^*(y) = f(\Phi(y)) = 0$ for a.e. $y\in \Phi^{-1}(E)$. The set $\Phi^{-1}(E)$ has positive measure, so Theorem \ref{hardy-uniqueness-2} gives $F = 0$ a.e. Thus, $c_{m,n} = 0$ for all $m,n \ge 0$. In particular, $\widehat{f}(\xi) = 0$ for every $\xi \in C \cap \Z^2$, and we conclude $f \equiv 0$.
\end{proof}

\appendix

\section{From Annular Observability to Exponential Stabilization}
\label{appendix:resolvent}

No claim of originality is made for the material in this section. AI was used to generate a rapid prototype, which was subsequently proofread and modified by
the authors.

We explain how the annular observability estimate of Theorem \ref{thm:annular-observability} implies exponential stabilization for the damped wave equation
$$\partial_t^2u-\Delta u+\gamma(x)\partial_tu=0,$$
on $\mathbb T^2$. The argument proceeds by deriving a uniform resolvent estimate and then applying the Gearhart--Pr\"uss theorem
\cite{gearhart78,pruss84}. Similar arguments appear in
\cite{green2019decay,green-jaye-mitkovski}. 

Define $$|D| := \frac{\sqrt{-\Delta}}{2\pi}.$$
Thus, $|D|e^{2\pi i k\cdot x} = |k|e^{2\pi i k\cdot x}$ and $-\Delta = 4\pi^2|D|^2.$

\subsection{A resolvent estimate}

We begin by converting the annular observability estimate into a
resolvent estimate for $|D|$.

\begin{proposition}
\label{prop:scalar-resolvent}
Let $\gamma\in H^{1/2}(\mathbb T^2)\cap L^\infty(\mathbb T^2)$ be non-negative and satisfy the GGCC \eqref{eq:GGCC-intro}. Then there exist constants $C>0$ and $\tau_0>0$ such that
$$ \|w\|_{L^2(\mathbb T^2)}^2 \le C\|(|D|-\tau)w\|_{L^2(\mathbb T^2)}^2 + C\int_{\mathbb T^2}\gamma(x)|w(x)|^2\,dx $$
for every $\tau\ge\tau_0$ and every $w\in H^1(\mathbb T^2)$.
\end{proposition}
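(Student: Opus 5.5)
We split $w$ in frequency according to the distance of $|k|$ from $\tau$, and apply Theorem \ref{thm:annular-observability} with $\alpha=0$ (so $s=\tfrac12$, matching the hypothesis $\gamma\in H^{1/2}\cap L^\infty$) to the near-resonant part. Fix $\tau\ge\tau_0$ and write $w=w_1+w_2$, where $\widehat{w_1}=\widehat w\,\mathbf 1_{\{\tau\le |k|\le \tau+1\}}$ and $w_2=w-w_1$. The annulus $[\tau,\tau+1]$ is chosen to match $A_{\tau,0}$ exactly. The term $\|(|D|-\tau)w\|_2^2$ does not vanish for frequencies in the annulus, but it will only be needed on the complement.

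The far part is handled by Plancherel. For $k\notin A_{\tau,0}$ we have either $|k|<\tau$ or $|k|>\tau+1$. I would use a slightly different split to get a clean bound: set $\widehat{w_1}=\widehat w\,\mathbf 1_{\{\tau-\delta\le|k|\le\tau+\delta\}}$ with a small $\delta$ to be chosen, so that $\bigl||k|-\tau\bigr|\ge\delta$ off the support of $\widehat{w_1}$. This gives
\[
\|w_2\|_2^2\le \delta^{-2}\|(|D|-\tau)w\|_2^2 .
\]
The set $\{\tau-\delta\le |k|\le \tau+\delta\}$ is contained in $A_{\tau-\delta,0}$ because $2\delta\le 1$. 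Theorem \ref{thm:annular-observability}, applied with radius $R=\tau-\delta$ (large once $\tau_0$ is large), then gives $\|w_1\|_2^2\le C_1\int\gamma|w_1|^2$, with $C_1$ independent of $\tau$.

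To recombine, use
\[
\int\gamma|w_1|^2\le 2\int\gamma|w|^2+2\|\gamma\|_\infty\|w_2\|_2^2 .
\]
This yields
\[
\|w\|_2^2=\|w_1\|_2^2+\|w_2\|_2^2\le 2C_1\int\gamma|w|^2+(1+2C_1\|\gamma\|_\infty)\,\delta^{-2}\,\|(|D|-\tau)w\|_2^2 ,
\]
which is the claim with $\delta=\tfrac12$. Because the far part is controlled directly by $\|(|D|-\tau)w\|_2$ and no smallness is required, there is no absorption step. The proof also uses $w\in H^1$, which ensures that $(|D|-\tau)w\in L^2$ and that all the Fourier series converge. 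Functions whose spectrum lies in an annulus are finite sums, so Theorem \ref{thm:annular-observability} applies to $w_1$ directly.

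The only real issue is checking that the constant in Theorem \ref{thm:annular-observability} is uniform over all real radii $R\ge\tau_0-\tfrac12$. That theorem asserts the constant is independent of $R$, and its proof already covers bounded $R$ by compactness, so there is no obstacle. One can therefore even take $\tau_0=1$ after shifting the annulus appropriately.
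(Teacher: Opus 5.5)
Your proposal is correct and follows the paper's proof: with $\delta=\tfrac12$, your $w_1$ is exactly the spectral projection $\Pi_\tau w$ onto $\bigl[\tau-\tfrac12,\tau+\tfrac12\bigr]$. The paper applies Theorem \ref{thm:annular-observability} with $\alpha=0$ at radius $R_\tau=\tau-\tfrac12$, bounds the complementary piece by $4\|(|D|-\tau)w\|_2^2$ via Plancherel, and recombines using $|\Pi_\tau w|^2\le 2|w|^2+2|Q_\tau w|^2$, just as you do. You were also right to drop the one-sided window $[\tau,\tau+1]$, since lattice points with $|k|$ just below $\tau$ would not be controlled by $\|(|D|-\tau)w\|_2$.
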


\begin{proof}
Fix $\tau>0$. Let $\Pi_\tau$ denote the spectral projection of $|D|$ onto the interval $\left[\tau-\frac12,\tau+\frac12\right]$.
Thus,
$$ \operatorname{supp} \widehat{\Pi_\tau w} \subset \left\{ k\in\mathbb Z^2: \tau-\frac12 \le |k| \le \tau+\frac12 \right\}.$$
If $R_\tau:=\tau-\frac12$, then $\operatorname{supp}
\widehat{\Pi_\tau w} \subset A_{R_\tau,0}\cap\mathbb Z^2.$ Theorem \ref{thm:annular-observability}, with $\alpha=0$, gives
$$\|\Pi_\tau w\|_2^2
\le C_{\text{obs}} \int_{\mathbb T^2}
\gamma|\Pi_\tau w|^2,$$
where $C_{\text{obs}}$ is independent of $\tau$. Set
$$Q_\tau:=I-\Pi_\tau.$$
We have
$$\bigl||k|-\tau\bigr|
\ge \frac12,$$
on $\operatorname{supp} \widehat{Q_\tau w}$. Hence, by Plancherel's theorem,
$$
\|Q_\tau w\|_2^2
\le
4\|(|D|-\tau)Q_\tau w\|_2^2
\le
4\|(|D|-\tau)w\|_2^2.
$$
We now estimate the resonant part. Since $\Pi_\tau w=w-Q_\tau w$, we have
$$|\Pi_\tau w|^2 \le 2|w|^2+2|Q_\tau w|^2.$$
Therefore,
$$
\int_{\mathbb T^2}
\gamma|\Pi_\tau w|^2
\le
2\int_{\mathbb T^2}\gamma|w|^2
+
2\|\gamma\|_\infty\|Q_\tau w\|_2^2.
$$
Using the preceding estimate for $Q_\tau w$, we obtain
$$
\|\Pi_\tau w\|_2^2
\le 2C_{\text{obs}}\,\int_{\mathbb T^2}\gamma|w|^2
+ 8C_{\text{obs}}\,\|\gamma\|_\infty \|(|D|-\tau)w\|_2^2.
$$
Finally, $$ \|w\|_2^2 = \|\Pi_\tau w\|_2^2+\|Q_\tau w\|_2^2,$$
and hence
$$\|w\|_2^2 \le C\|(|D|-\tau)w\|_2^2 +
C\int_{\mathbb T^2}\gamma|w|^2,$$
for some $C > 0$.
\end{proof}

\subsection{The damped wave generator}

We next formulate the damped wave equation as a first-order equation. The energy is
$$ E_u(t) = \frac12 \left( \|\nabla u(t)\|_2^2 + \|\partial_tu(t)\|_2^2 \right).$$
The natural energy space is $\mathcal H := \dot H^1(\mathbb T^2)\times L^2(\mathbb T^2)$ where $\dot H^1(\mathbb T^2) := H^1(\mathbb T^2)/\mathbb C$. We say that $[u] = [\tilde u]$ for $u, \tilde u \in H^1(\T^2)$ if and only if $u - \tilde u \in \C$.  We equip $\mathcal H$ with the norm
$$
\|(u,v)\|_{\mathcal H}^2
:=
4\pi^2\||D|u\|_2^2+\|v\|_2^2.
$$
Here and below, we suppress the equivalence-class notation for the first
component. Since
$$
4\pi^2\||D|u\|_2^2
=
\|\nabla u\|_2^2,
$$
we have
$$
2E_u(t)
=
\|(u(t),\partial_tu(t))\|_{\mathcal H}^2.
$$
Define
$$
\mathcal A_\gamma
\begin{pmatrix}
u\\
v
\end{pmatrix}
=
\begin{pmatrix}
v\\
-4\pi^2|D|^2u-\gamma v
\end{pmatrix}
$$
with domain $D(\mathcal A_\gamma)
= H^2(\mathbb T^2)/\mathbb C \times
H^1(\mathbb T^2).$ Then, the damped wave equation is equivalent to
$$
\partial_t
\begin{pmatrix}
u\\
\partial_tu
\end{pmatrix}
=
\mathcal A_\gamma
\begin{pmatrix}
u\\
\partial_tu
\end{pmatrix}.
$$
We also introduce the free wave generator
$$
\mathcal A_0
\begin{pmatrix} u\\ v \end{pmatrix} = \begin{pmatrix} v\\ -4\pi^2|D|^2u
\end{pmatrix}. $$
The operator $\mathcal A_0$ is skew-adjoint on $\mathcal H$. Write $\mathcal A_\gamma
= \mathcal A_0+\mathcal K_\gamma$ where
$$
\mathcal K_\gamma
\begin{pmatrix}
u\\
v
\end{pmatrix}
=
\begin{pmatrix}
0\\
-\gamma v
\end{pmatrix}.
$$
Since $\gamma\in L^\infty(\mathbb T^2)$, the operator
$\mathcal K_\gamma$ is bounded on $\mathcal H$. Moreover, since
$\gamma\ge0$,
\begin{equation}
    \label{eq:dissipativity-generator}
    \operatorname{Re}
    \langle \mathcal K_\gamma U,U\rangle_{\mathcal H}
    =
    -\int_{\mathbb T^2}\gamma(x)|v(x)|^2\,dx
    \le0.
\end{equation}
Thus $\mathcal A_\gamma$ is maximal dissipative and hence, by the
Lumer--Phillips theorem, generates a contraction semigroup
$e^{t\mathcal A_\gamma}$ on $\mathcal H$. In particular,
$$
\|e^{t\mathcal A_\gamma}U_0\|_{\mathcal H}
\le
\|U_0\|_{\mathcal H},
$$
for $t\ge 0$.

\subsection{The high-frequency resolvent estimate}

The next step is to convert Proposition \ref{prop:scalar-resolvent} into
a resolvent estimate for the wave generator.

\begin{proposition}
\label{prop:high-frequency-generator}
There exist $C>0$ and $\tau_1>0$ such that
$$
\|U\|_{\mathcal H}
\le
C
\|(\mathcal A_\gamma-2\pi i\tau)U\|_{\mathcal H}
$$
for every $|\tau|\ge\tau_1$ and every
$U\in D(\mathcal A_\gamma)$.
\end{proposition}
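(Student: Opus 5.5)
The plan is to argue directly by energy estimates combined with Proposition \ref{prop:scalar-resolvent}. Let $U=(u,v)\in D(\mathcal A_\gamma)$ and set $F=(f,g):=(\mathcal A_\gamma-2\pi i\tau)U$. Written out, this reads
$$v-2\pi i\tau u=f,\qquad -4\pi^2|D|^2u-\gamma v-2\pi i\tau v=g.$$

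\textbf{Step 1: the damping term.} Take the real part of $\langle F,U\rangle_{\mathcal H}$. By skew-adjointness of $\mathcal A_0$ and \eqref{eq:dissipativity-generator},
$$\int\gamma|v|^2=-\operatorname{Re}\langle F,U\rangle_{\mathcal H}\le\|F\|_{\mathcal H}\|U\|_{\mathcal H}.$$

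\textbf{Step 2: a scalar equation.} Eliminate $v=f+2\pi i\tau u$ from the second equation. Using $4\pi^2(|D|^2-\tau^2)=4\pi^2(|D|-\tau)(|D|+\tau)$, we get
$$4\pi^2(|D|-\tau)(|D|+\tau)u=-g-2\pi i\tau f-\gamma v.$$

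\textbf{Step 3: apply Proposition \ref{prop:scalar-resolvent}.} Use it with $w:=|D|u$, or alternatively $w:=v$; the choice $w=v$ seems cleanest. Write $v=2\pi i\tau u+f$. Then
$$(|D|-\tau)v=2\pi i\tau(|D|-\tau)u+(|D|-\tau)f.$$
Divide the equation from Step 2 by $(|D|+\tau)$, which is a bounded operator with norm $\le\tau^{-1}$ for $\tau>0$. This gives
$$2\pi\tau\|(|D|-\tau)u\|_2\lesssim\|g\|_2+\tau\|f\|_{2}/\tau\cdot\tau+\|\gamma\|_\infty\|v\|_2.$$
Here one has to be careful with the $f$ terms. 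We only control $\||D|f\|_2$, not $\tau\|f\|_2$; the mean of $f$ is irrelevant since we work modulo constants. The fix is to use $\|(|D|+\tau)^{-1}\tau f\|_2\le\|f\|_2\le\tfrac{1}{2\pi}\||D|f\|_2$ on the nonzero modes. The dangerous term is $\|\gamma v\|_2$, which is not small. To handle it, replace it by $\|\sqrt\gamma v\|_2\|\gamma\|_\infty^{1/2}$ and use Step 1.

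The resulting bound is $\|(|D|-\tau)v\|_2\lesssim\|F\|_{\mathcal H}+\|\sqrt\gamma v\|_2$. The term $(|D|-\tau)f$ is not bounded by $\|F\|$, because $\tau\|f\|$ is uncontrolled. So I would instead take $w:=2\pi|D|u$, which has $\|w\|_2\approx$ the first component norm, and which is related to $v$ via $\tau|D|u\approx|D|v/(2\pi i)$ modulo $f$. Feeding $w$ into Proposition \ref{prop:scalar-resolvent} gives
$$\|w\|^2\lesssim\|(|D|-\tau)w\|^2+\int\gamma|w|^2.$$
The first term is controlled as above by $\|F\|_{\mathcal H}+\|\sqrt\gamma v\|_2$. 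For the second, $\int\gamma|w|^2$, I would compare $w$ with $-iv$ plus an error. This error splits as $(|D|-\tau)u$-type terms, bounded by the previous estimate, and $f$-type terms; the latter are fine at the level $\|f\|_2\lesssim\||D|f\|_2$.

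\textbf{Step 4: conclude.} This yields $\||D|u\|_2\lesssim\|F\|_{\mathcal H}+\|\sqrt\gamma v\|_2$. The same bound holds for $\|v\|_2$, via $\|v\|_2\le\|2\pi\tau u\|_2+\|f\|_2$ together with $\|(|D|-\tau)u\|_2$ small. Then Step 1 gives
$$\|U\|_{\mathcal H}^2\lesssim\|F\|_{\mathcal H}^2+\|F\|_{\mathcal H}\|U\|_{\mathcal H},$$
and absorbing the cross term yields the claim for $\tau\ge\tau_1$. Negative $\tau$ follows by complex conjugation symmetry.

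The main obstacle is the bookkeeping in Step 3: relating $v$, $|D|u$ and $\tau u$ while controlling each $f$-term only through $\||D|f\|_2$.
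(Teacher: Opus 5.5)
Your final route is correct and is essentially the paper's proof in different coordinates. You apply Proposition~\ref{prop:scalar-resolvent} to $w=2\pi|D|u$, invert the non-resonant factor $|D|+\tau$, then use dissipation and absorption; with the paper's $r=v+2\pi i|D|u$ and $q=v-2\pi i|D|u$ one has $w=(r-q)/(2i)$, and your error term $2\pi(|D|-\tau)u+if$ in $w=-iv+\cdots$ is exactly $iq$.

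The steps you leave implicit are routine. Normalize $\widehat f(0)=0$ by shifting the constant in $u$, and use $\tau|D|/(|D|+\tau)\le|D|$ for the $f$-term in $(|D|-\tau)w$. Your abandoned choice $w=v$ would also have worked, because the $f$-contributions to $(|D|-\tau)v$ combine into $|D|^2(|D|+\tau)^{-1}f$, which is bounded by $\||D|f\|_2$.
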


\begin{proof}
Since $\gamma$ is real-valued, complex conjugation reduces the case
$\tau<0$ to the case $\tau>0$. We therefore suppose that
$\tau\ge \max\{\tau_0,1\}$.

Write
$$
G:=(\mathcal A_\gamma-2\pi i\tau)U
=\begin{pmatrix}f\\ g\end{pmatrix},
\qquad
U=\begin{pmatrix}u\\ v\end{pmatrix},
$$
and set
$$
r:=v+2\pi i|D|u,
\qquad
q:=v-2\pi i|D|u.
$$

A direct calculation gives
$$
2\pi(|D|-\tau)r
=
2\pi|D|f-i(g+\gamma v)
$$
and
$$
2\pi(|D|+\tau)q
=
2\pi|D|f+i(g+\gamma v).
$$

Set
$$
d:=\int_{\mathbb T^2}\gamma|v|^2.
$$
Since $|D|+\tau\ge\tau$ and $\tau\ge1$, the second identity gives
$$
\|q\|_2^2
\lesssim
\|G\|_{\mathcal H}^2+\|\gamma v\|_2^2
\lesssim
\|G\|_{\mathcal H}^2+d.
$$

Here we used
$$
\|\gamma v\|_2^2
\le \|\gamma\|_\infty d.
$$

Applying Proposition \ref{prop:scalar-resolvent} to $r$ and using the
first identity, we obtain
$$
\|r\|_2^2
\lesssim
\|G\|_{\mathcal H}^2+d
+\int_{\mathbb T^2}\gamma|r|^2.
$$

Since $r=2v-q$,
$$
\int_{\mathbb T^2}\gamma|r|^2
\lesssim
d+\|q\|_2^2
\lesssim
\|G\|_{\mathcal H}^2+d.
$$
Consequently,
$$
\|r\|_2^2+\|q\|_2^2
\lesssim
\|G\|_{\mathcal H}^2+d.
$$

On the other hand,
$$
\|r\|_2^2+\|q\|_2^2
=
2\|v\|_2^2+8\pi^2\||D|u\|_2^2
=
2\|U\|_{\mathcal H}^2.
$$
Thus
$$
\|U\|_{\mathcal H}^2
\lesssim
\|G\|_{\mathcal H}^2+d.
$$

Finally,
$$
d
=
-\operatorname{Re}\langle G,U\rangle_{\mathcal H}
\le
\|G\|_{\mathcal H}\|U\|_{\mathcal H}.
$$
Young's inequality and absorption now give
$$
\|U\|_{\mathcal H}
\lesssim
\|G\|_{\mathcal H}
=
\|(\mathcal A_\gamma-2\pi i\tau)U\|_{\mathcal H}.
$$
\end{proof}

\subsection{Bounded frequencies}

Proposition \ref{prop:high-frequency-generator} controls the resolvent for
large $|\tau|$. We next show that there are no obstructions on the
remaining bounded portion of the imaginary axis.

\begin{proposition}
\label{prop:imaginary-axis-resolvent}
Under the assumptions above, $2\pi i\tau\in\rho(\mathcal A_\gamma)$ for every $\tau\in\mathbb R$.
\end{proposition}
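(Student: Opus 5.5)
The plan is to show that $\mathcal A_\gamma$ has compact resolvent. Its spectrum is then discrete and consists only of eigenvalues, so it suffices to rule out eigenvalues $2\pi i\tau$ with $\tau\in\mathbb R$. Showing there is no such eigenvalue will use only the dissipation identity \eqref{eq:dissipativity-generator} and the fact that a trigonometric polynomial cannot vanish on a set of positive measure. The GGCC enters only through the fact, already used in the proof of Theorem \ref{thm:annular-observability}, that it forces $|\{\gamma>0\}|>0$.

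\textbf{Step 1 (compact resolvent).} Since $\mathcal A_\gamma$ is maximal dissipative, $1\in\rho(\mathcal A_\gamma)$ and $1\in\rho(\mathcal A_0)$. The resolvent $(1-\mathcal A_0)^{-1}$ is compact on $\mathcal H$. To see this, diagonalize on each Fourier mode $k\neq0$ and note that $(1-\mathcal A_0)^{-1}$ maps $\mathcal H$ boundedly into $D(\mathcal A_0)=H^2/\mathbb C\times H^1$. This space embeds compactly into $\dot H^1\times L^2$ by Rellich's theorem. The mode $k=0$ contributes only a one-dimensional piece, coming from the constant part of $v$, and is harmless. Using $\mathcal A_\gamma=\mathcal A_0+\mathcal K_\gamma$ we have the resolvent identity
$$(1-\mathcal A_\gamma)^{-1}=(1-\mathcal A_0)^{-1}\bigl(I+\mathcal K_\gamma(1-\mathcal A_\gamma)^{-1}\bigr),$$
which is a compact operator composed with a bounded one, hence compact. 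Consequently $\sigma(\mathcal A_\gamma)$ consists of isolated eigenvalues of finite multiplicity.

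\textbf{Step 2 (no imaginary eigenvalues).} Suppose $U=(u,v)\in D(\mathcal A_\gamma)$ satisfies $\mathcal A_\gamma U=2\pi i\tau U$. Taking the real part of $\langle(\mathcal A_\gamma-2\pi i\tau)U,U\rangle_{\mathcal H}=0$ and using \eqref{eq:dissipativity-generator} gives $\int\gamma|v|^2=0$, so $\gamma v=0$ almost everywhere. The eigen-equations then read $v=2\pi i\tau u$ (modulo constants in the first component) and $-4\pi^2|D|^2u=2\pi i\tau v=-4\pi^2\tau^2u$.

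If $\tau\neq0$, then $u$ is an eigenfunction of $|D|$ with eigenvalue $|\tau|$, so $\widehat u$ is supported on the finite set $\{|k|=|\tau|\}$. Hence $v=2\pi i\tau u$ is a trigonometric polynomial vanishing on the positive-measure set $\{\gamma>0\}$. Therefore $v\equiv0$, and so $u=0$ in $\dot H^1$. If $\tau=0$, then $v=0$ and $|D|^2u=0$, so $u$ is constant, i.e.\ $u=0$ in $\dot H^1$. Either way $U=0$, so $2\pi i\tau\notin\sigma(\mathcal A_\gamma)$.

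The main point requiring care is Step 1. The issue is the zero mode: one must check that quotienting by constants leaves $\mathcal A_0$ with a well-defined compact resolvent on $\mathcal H=\dot H^1\times L^2$, and that the constants in the $v$-component, which $\mathcal A_0$ sends to $0$ in $\dot H^1$, do not obstruct this. I expect this to be routine via the explicit mode-by-mode inverse of $1-\mathcal A_0$.
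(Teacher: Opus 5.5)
Your proposal is correct and follows the paper's route: injectivity of $\mathcal A_\gamma-2\pi i\tau$ comes from the dissipation identity together with the fact that a nonzero trigonometric polynomial cannot vanish on $\{\gamma>0\}$, and surjectivity comes from compactness of the resolvent (you justify compactness via the resolvent identity with $\mathcal A_0$, whereas the paper simply cites the compact embedding $D(\mathcal A_\gamma)\hookrightarrow\mathcal H$). One small point to tighten is the case $\tau=0$: there the first equation only gives that $v$ is constant, so you need $\gamma v=0$ together with $|\{\gamma>0\}|>0$ to conclude $v=0$, and this is exactly where the damping is essential, since $(0,1)$ spans $\ker\mathcal A_0$.
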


\begin{proof}
We first show that $\mathcal A_\gamma-2\pi i\tau$
is injective for every $\tau\in\mathbb R$. Suppose
$$
(\mathcal A_\gamma-2\pi i\tau)U=0,
$$
where $U=
\begin{pmatrix}
u\\ v \end{pmatrix}.$ Taking the real part of the $\mathcal H$ inner product with $U$ and
using \eqref{eq:dissipativity-generator}, we obtain
$$ \int_{\mathbb T^2}\gamma|v|^2=0.$$ 
Since $\gamma\ge0$, this implies $\gamma v=0$ almost everywhere. We first consider $\tau=0$. The first component of
$\mathcal A_\gamma U=0$ gives
$[v]=0$ in $\dot H^1(\mathbb T^2)$, so $v$ is constant. Since $\gamma$ satisfies GGCC, 
$$
\int_{\mathbb T^2}\gamma>0.
$$
Then, $\int_{\mathbb T^2}\gamma|v|^2=0$ forces $v=0$. The second component then gives $|D|^2u=0,$ so $u$ is constant. Hence $u=0$ in
$\dot H^1(\mathbb T^2)$, and therefore $U=0$ in $\mathcal H$.

Now suppose $\tau\ne0$. The first component gives
$ [v]-2\pi i\tau[u]=0$ in $\dot H^1(\mathbb T^2)$. Thus
$v-2\pi i\tau u$ is constant. By changing the representative of the equivalence class $[u]$ by an additive constant, we may therefore assume that $v=2\pi i\tau u.$ Since $\gamma v=0$ and $\tau\ne0$, it follows that $\gamma u=0$ almost everywhere.

The second component of $(\mathcal A_\gamma-2\pi i\tau)U=0$
is $$
-4\pi^2|D|^2u-\gamma v-2\pi i\tau v=0.
$$
Using
$v=2\pi i\tau u
$ and $\gamma u=0,
$ we obtain
$$
-4\pi^2|D|^2u
+
4\pi^2\tau^2u
=
0.
$$
Hence
$$
|D|^2u=\tau^2u.
$$
In terms of Fourier coefficients,
$$
(|k|^2-\tau^2)\widehat u(k)=0
$$
for every $k\in\mathbb Z^2$. Thus
$$
\operatorname{supp}\widehat u
\subset
\{k\in\mathbb Z^2:|k|=|\tau|\}.
$$
This is a finite set, so $u$ is a trigonometric polynomial. On the other hand, $\gamma u=0$ implies that $u=0$ almost everywhere on the positive-measure set
$\{\gamma>0\}.$ A nonzero trigonometric polynomial on $\mathbb T^2$ cannot vanish on a
set of positive measure. Therefore
$u\equiv0.$ It follows that $v=0$, and hence $U=0$. Thus,
$\mathcal A_\gamma-2\pi i\tau$ is injective for every $\tau\in\mathbb R$.

It remains to deduce surjectivity. The compact embedding
$D(\mathcal A_\gamma)\hookrightarrow\mathcal H$ shows that $\mathcal A_\gamma$ has compact resolvent. Consequently, by the
Fredholm alternative, every spectral point of $\mathcal A_\gamma$ is an eigenvalue. Since we have proved that
$\mathcal A_\gamma-2\pi i\tau$ is injective, it follows that
$2\pi i\tau\in\rho(\mathcal A_\gamma) $ for every $\tau\in\mathbb R$.
\end{proof}

We can now combine the high-frequency and bounded-frequency arguments.

\begin{proposition}
\label{prop:uniform-generator-resolvent}
Under the assumptions above,
$$
\sup_{\tau\in\mathbb R}
\left\|
(\mathcal A_\gamma-2\pi i\tau)^{-1}
\right\|_{\mathcal H\to\mathcal H}
<\infty.
$$
\end{proposition}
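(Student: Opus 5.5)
The plan is to split the imaginary axis into a high-frequency part and a compact part, and treat each with the earlier propositions. For $|\tau|\ge\tau_1$, Proposition \ref{prop:high-frequency-generator} gives
$$\|U\|_{\mathcal H}\le C\|(\mathcal A_\gamma-2\pi i\tau)U\|_{\mathcal H}$$
for all $U\in D(\mathcal A_\gamma)$. By Proposition \ref{prop:imaginary-axis-resolvent}, $2\pi i\tau\in\rho(\mathcal A_\gamma)$, so the resolvent exists. Applying the estimate to $U=(\mathcal A_\gamma-2\pi i\tau)^{-1}G$ then yields $\|(\mathcal A_\gamma-2\pi i\tau)^{-1}\|\le C$, uniformly in $|\tau|\ge\tau_1$.

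For $|\tau|\le\tau_1$, we use that the resolvent set $\rho(\mathcal A_\gamma)$ is open and that $z\mapsto(\mathcal A_\gamma-z)^{-1}$ is continuous (indeed analytic) on it. Proposition \ref{prop:imaginary-axis-resolvent} says the compact segment $\{2\pi i\tau:|\tau|\le\tau_1\}$ lies in $\rho(\mathcal A_\gamma)$. The continuous function $\tau\mapsto\|(\mathcal A_\gamma-2\pi i\tau)^{-1}\|$ is therefore bounded on $[-\tau_1,\tau_1]$. Concretely, the resolvent identity shows that near any $z_0\in\rho$ the norm is bounded by $2\|(\mathcal A_\gamma-z_0)^{-1}\|$ once $|z-z_0|\le 1/(2\|(\mathcal A_\gamma-z_0)^{-1}\|)$. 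A finite subcover of the segment then gives the bound. Taking the maximum of the two bounds proves the proposition.

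There is no substantial obstacle. The only point needing care is that the estimate of Proposition \ref{prop:high-frequency-generator} is an a priori bound on $D(\mathcal A_\gamma)$. It becomes a resolvent bound only because we already know the resolvent exists; this is exactly what Proposition \ref{prop:imaginary-axis-resolvent} supplies, including for large $|\tau|$.
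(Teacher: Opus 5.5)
Your proof is correct and takes the same route as the paper. Proposition \ref{prop:high-frequency-generator}, turned into a resolvent bound using the existence of the resolvent from Proposition \ref{prop:imaginary-axis-resolvent}, handles $|\tau|\ge\tau_1$, and norm-continuity of the resolvent on the compact segment handles $|\tau|\le\tau_1$; your remark that the a priori estimate needs the resolvent to exist and your explicit Neumann-series bound $2\|(\mathcal A_\gamma-z_0)^{-1}\|$ spell out what the paper states briefly.
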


\begin{proof}
By Proposition \ref{prop:high-frequency-generator}, the resolvent is uniformly bounded for all sufficiently large $|\tau|$. By Proposition \ref{prop:imaginary-axis-resolvent},
$2\pi i\tau\in\rho(\mathcal A_\gamma)$ for every $\tau\in\mathbb R$. The resolvent map $z\mapsto(\mathcal A_\gamma-z)^{-1}$ is locally analytic, and in particular norm-continuous, on the resolvent set. Hence $\tau\mapsto (\mathcal A_\gamma-2\pi i\tau)^{-1}$ is norm-continuous on every bounded interval of $\mathbb R$. Its norm is therefore bounded on every compact interval. Combining the bounded-frequency estimate with the high-frequency estimate proves the claim.
\end{proof}

\subsection{Exponential stabilization}

We can now prove the stabilization statement from the Introduction.

\begin{corollary}[Exponential stabilization]
\label{cor:exponential-stabilization-appendix}
Let
$\gamma\in H^{1/2}(\mathbb T^2)\cap L^\infty(\mathbb T^2)$
be non-negative and satisfy the GGCC. Then there exist constants $C,c>0$ such that every solution of
$$
\partial_t^2u-\Delta u+\gamma(x)\partial_tu=0
$$
satisfies
$$
E_u(t)
\le
Ce^{-ct}E_u(0),
$$
for $t\ge0.$
\end{corollary}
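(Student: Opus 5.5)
The plan is to deduce Corollary \ref{cor:exponential-stabilization-appendix} from the uniform resolvent bound of Proposition \ref{prop:uniform-generator-resolvent} via the Gearhart--Pr\"uss theorem \cite{gearhart78,pruss84}, applied to the contraction semigroup $e^{t\mathcal A_\gamma}$ on the Hilbert space $\mathcal H=\dot H^1(\mathbb T^2)\times L^2(\mathbb T^2)$. Recall the statement: for a $C_0$-semigroup $T(t)$ on a Hilbert space with generator $\mathcal A$, one has $\|T(t)\|\le Ce^{-ct}$ for some $C,c>0$ if and only if $\{\operatorname{Re} z\ge 0\}\subset\rho(\mathcal A)$ and $\sup_{\operatorname{Re} z\ge0}\|(\mathcal A-z)^{-1}\|<\infty$.

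\textbf{Open right half-plane.} Since $\mathcal A_\gamma$ is maximal dissipative, the Lumer--Phillips / Hille--Yosida estimate gives
$$\{\operatorname{Re} z>0\}\subset\rho(\mathcal A_\gamma),\qquad \|(\mathcal A_\gamma-z)^{-1}\|\le (\operatorname{Re} z)^{-1}.$$
This bound degenerates as $\operatorname{Re} z\to0$, so it does not yet give uniformity up to the boundary.

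\textbf{Imaginary axis and a strip.} The axis $i\mathbb R$ lies in $\rho(\mathcal A_\gamma)$ by Proposition \ref{prop:imaginary-axis-resolvent}, and Proposition \ref{prop:uniform-generator-resolvent} gives $M:=\sup_{\tau}\|(\mathcal A_\gamma-2\pi i\tau)^{-1}\|<\infty$. To extend this to a strip, I will use the Neumann series
$$(\mathcal A_\gamma-z)^{-1}=(\mathcal A_\gamma-2\pi i\tau)^{-1}\sum_{k\ge0}\bigl((z-2\pi i\tau)(\mathcal A_\gamma-2\pi i\tau)^{-1}\bigr)^k .$$
This gives $\|(\mathcal A_\gamma-z)^{-1}\|\le 2M$ whenever $|z-2\pi i\tau|\le 1/(2M)$. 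Hence the resolvent is bounded by $2M$ on the strip $0\le \operatorname{Re} z\le 1/(2M)$.

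\textbf{Remaining half-plane and conclusion.} On $\operatorname{Re} z\ge 1/(2M)$, the dissipative bound gives $\|(\mathcal A_\gamma-z)^{-1}\|\le 2M$. So the resolvent is uniformly bounded on the whole closed right half-plane, and Gearhart--Pr\"uss yields
$$\|e^{t\mathcal A_\gamma}\|_{\mathcal H\to\mathcal H}\le Ce^{-ct}.$$
Since $2E_u(t)=\|(u(t),\partial_t u(t))\|_{\mathcal H}^2$, squaring this gives $E_u(t)\le C^2e^{-2ct}E_u(0)$.

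\textbf{Main obstacle.} The only real subtlety is the quotient by constants in $\dot H^1$: solutions in $H^1\times L^2$ must correspond to semigroup trajectories in $\mathcal H$. The energy only sees $\nabla u$ and $\partial_t u$, and adding a constant to $u$ does not change the equation. So I will identify the solution with $e^{t\mathcal A_\gamma}([u_0],u_1)$, first for data in $D(\mathcal A_\gamma)$ and then for general data by density and the contraction property. The analytic content is entirely in the earlier propositions.
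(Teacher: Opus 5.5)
Your proposal is correct and follows the paper's route: the contraction semigroup on $\mathcal H$, the uniform resolvent bound on the imaginary axis from Proposition \ref{prop:uniform-generator-resolvent}, then Gearhart--Pr\"uss, then squaring via $2E_u(t)=\|(u(t),\partial_t u(t))\|_{\mathcal H}^2$. The only differences are that you explicitly extend the resolvent bound from $i\mathbb R$ to the closed right half-plane (via the Neumann series and the Hille--Yosida bound), whereas the paper builds this into the bounded-semigroup form of Gearhart--Pr\"uss, and that you address the identification of solutions with semigroup orbits modulo constants, which the paper leaves implicit.
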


\begin{proof}
As discussed above, $\mathcal A_\gamma$ generates a contraction semigroup
on the Hilbert space
$$
\mathcal H
=
\dot H^1(\mathbb T^2)\times L^2(\mathbb T^2).
$$
By Proposition \ref{prop:uniform-generator-resolvent}, we have
$2\pi i\mathbb R
\subset \rho(\mathcal A_\gamma)$
and $\sup_{\tau\in\mathbb R}
\left\| (\mathcal A_\gamma-2\pi i\tau)^{-1}
\right\|_{\mathcal H\to\mathcal H}
<\infty.$
Since $\{2\pi i\tau:\tau\in\mathbb R\}
= i\mathbb R,$ this is precisely a uniform resolvent bound on the imaginary axis. The Gearhart--Pr\"uss theorem \cite{gearhart78,pruss84} therefore implies
that there exist constants $M,\omega>0$ such that
$$
\|e^{t\mathcal A_\gamma}\|_{\mathcal H\to\mathcal H}
\le
Me^{-\omega t},
$$
for $t\ge 0$. Consequently,
$$
\|(u(t),\partial_tu(t))\|_{\mathcal H}
\le
Me^{-\omega t}
\|(u(0),\partial_tu(0))\|_{\mathcal H}.
$$
Squaring and recalling that
$$2E_u(t) = \|(u(t),\partial_tu(t))\|_{\mathcal H}^2,$$
we obtain
$$ E_u(t) \le M^2e^{-2\omega t}E_u(0),$$
for $t\ge 0$.
\end{proof}

\section*{Acknowledgments}

Research supported in part by NSF grants DMS-2453251 and DMS-2049477.  This research was undertaken in part while B.J. was a Simons Fellow.

AI was used for proofreading.

\nocite{*}
\bibliographystyle{plain}\bibliography{pls_2}

\end{document}